\newtheorem{remark}{Remark}
\newtheorem{definition}{Definition}
\newtheorem{lemma}{Lemma}
\newtheorem{theorem}{Theorem}
\newtheorem{proposition}{Proposition}
\DeclareMathOperator*{\argmin}{arg\,min}
\DeclareMathOperator{\spn}{span}
\DeclareMathOperator{\rank}{rank}
\DeclareMathOperator{\diag}{diag}
\newcommand{\Mod}[1]{\ (\mathrm{mod}\ #1)}
\DeclareFontFamily{U}{mathx}{\hyphenchar\font45}
\DeclareFontShape{U}{mathx}{m}{n}{
      <5> <6> <7> <8> <9> <10>
      <10.95> <12> <14.4> <17.28> <20.74> <24.88>
      mathx10
      }{}
\DeclareSymbolFont{mathx}{U}{mathx}{m}{n}
\DeclareMathAccent{\widebar}{0}{mathx}{"73}
\begin{document}

\preprint{AIP/123-QED}

\title[On the Structure of Time-delay Embedding in  Linear Models of Non-linear Dynamical Systems]{On the Structure of Time-delay Embedding in  Linear Models of Non-linear Dynamical Systems}

\author{Shaowu Pan}
\email{shawnpan@umich.edu.}
\affiliation{Department of Aerospace Engineering, University of Michigan, Ann Arbor, MI 48105, USA}
\author{Karthik Duraisamy}%
\affiliation{ 
Department of Aerospace Engineering, University of Michigan, Ann Arbor, MI 48105, USA
}%

\date{\today}

\begin{abstract}
This work addresses fundamental issues related to the structure and conditioning of linear time-delayed models of non-linear dynamics on an attractor. 
While this approach has been well-studied in the asymptotic sense (e.g. for infinite number of delays), the non-asymptotic setting is not well-understood.
First, we show  that the minimal time-delays required for perfect signal recovery are solely determined by the sparsity in the Fourier spectrum for scalar systems. For the vector case, we provide a  rank test and a geometric interpretation for the necessary and sufficient conditions for the existence of an accurate linear time delayed model. Further, we prove that the output controllability index of a linear system induced by the Fourier spectrum serves as a tight upper bound on the minimal number of time delays required.  An explicit expression for the exact linear model in the spectral domain is also provided. From a numerical perspective, the effect of the sampling rate and the number of time delays on numerical conditioning  is examined. An upper bound on the condition number is derived, with the  implication that  conditioning can be improved with additional time delays and/or decreasing sampling rates. Moreover, it is explicitly shown that the underlying dynamics can be accurately recovered using only a partial period of the attractor.  Our analysis is first validated in simple periodic and quasi-periodic systems, and sensitivity to noise is also investigated.
Finally, issues and practical strategies of choosing time delays in  large-scale chaotic systems are discussed and demonstrated on 3D  turbulent Rayleigh-B\'{e}nard convection. 
\end{abstract}

\maketitle

\begin{quotation}
It is well-known that periodic and quasi-periodic attractors of  a non-linear dynamical system can be reconstructed in a discrete sense using time-delay embedding. Following this argument, it has been shown that even chaotic non-linear systems can be represented as a linear system with intermittent forcing. Although it is known that linear models such as those generated by the Hankel Dynamic Mode Decomposition can - in  principle - reconstruct an ergodic dynamical system in an asymptotic sense,  quantitative details such as the required sampling rate and the number of delays remain unknown. For scalar and vector periodic systems, we derive the minimal necessary time delays and show that time delays not only lead to a more expressive feature space but also result in better numerical conditioning. Further, we explain the reason behind the accurate recovery of   attractor dynamics using only a partial period of data. Finally, we discuss the impact of the number of delays in modeling large-scale chaotic systems, e.g., turbulent Rayleigh-B\'{e}nard convection.
\end{quotation}

\section{Introduction}
Time-delay embedding, also known as delay-coordinate embedding, refers to the inclusion of history information in dynamical system models.  This idea has been employed in a wide variety of contexts including time series modeling~\cite{chen1989representations,hegger1999practical},  Koopman operators~\cite{arbabi2017ergodic,arbabi2017study,kamb2018time,brunton2017chaos} and closure modeling~\cite{pan2018_ddc}. The use of  delays to construct a ``rich" feature space for geometrical reconstruction of non-linear dynamical systems is justified by the Takens embedding theorem~\cite{takens1981detecting} which states that by using a \emph{delay-coordinate map}, one can construct a diffeomorphic shadow manifold from univariate observations of the original system in the generic sense, and its extensions in a measure-theoretic sense~\cite{Sauer1991}, filtered memory~\cite{Sauer1991}, deterministic/stochastic forcing~\cite{Stark2003,Stark2003b},  and multivariate embeddings~\cite{Deyle2011}.

Time delay embedding  naturally arises in the representation of the evolution of partially observed states in dynamical systems. As an illustrative example, consider a $N$-dimensional linear autonomous discrete dynamical system with $Q$ partially observed (or resolved) states, $Q < N$:
\begin{equation}{\label{eq:general_fom_linear}}
\begin{bmatrix}
\mathbf{\hat x}^{n+1}        \\
\mathbf{\tilde x}^{n+1}
\end{bmatrix}
=
\begin{bmatrix}
\mathbf{A}_{11} & \mathbf{A}_{12}\\
\mathbf{A}_{21} & \mathbf{A}_{22}
\end{bmatrix}
\begin{bmatrix}
\textcolor{black}{\mathbf{\hat x}^{n}}         \\
\textcolor{black}{\mathbf{\tilde x}^{n}}
\end{bmatrix},
\end{equation}
where $\mathbf{\hat{x}}^n \in \mathbb{R}^{Q}$, $\mathbf{\tilde{x}}^n \in \mathbb{R}^{N-Q}$, $n \in \mathbb{N}$, $ \mathbf{A}_{11} \in \mathbb{R}^{Q \times Q}$, $\mathbf{A}_{12} \in \mathbb{R}^{Q \times (N-Q)}$, $\mathbf{A}_{21} \in \mathbb{R}^{(N-Q) \times Q}$, $\mathbf{A}_{22} \in \mathbb{R}^{(N-Q) \times (N-Q)}$. The dynamical evolution of the observed states $\mathbf{\hat{x}}$ is given by:
\begin{equation}{\label{eq:linear_gle}}
\mathbf{\hat{x}}^{n+1}
=
\mathbf{A}_{11}\mathbf{\hat x}^{n} +
\sum_{k=0}^{n-1} \mathbf{A}_{12} \mathbf{A}_{22}^{k} \mathbf{A}_{21} \mathbf{\hat x}^{n-1-k} + \mathbf{A}_{12} \mathbf{A}_{22}^n \mathbf{\tilde{x}}^{0}.
\end{equation}
Typically, the last term is of a  transient nature, and thus the above equation can be considered to be \emph{closed} in the observed variables $\bm{\hat{x}}$. The  second term on the right hand side of \Cref{eq:linear_gle} describes how the time-history of the observed  modes affects the dynamics.  Thus, \Cref{eq:linear_gle} implies that it is  possible to extract the dynamics of the observables  $\mathbf{\hat{x}}$ using time delayed observables, i.e., $\mathbf{\hat{x}}^{n+1} = \mathbf{C}_0 \mathbf{\hat{x}}^{n} +  \sum_{k=1}^{L}\mathbf{C}_k \mathbf{\hat{x}}^{n-k}$, where $\mathbf{C}_k \in \mathbb{R}^{Q\times Q}$, and $L$ is the number of time delays.  It should, however, be noted that \emph{explicit} delays might not be  necessary if one has access to high order time derivatives~\cite{takens1981detecting} or abundant distinct observations~\cite{Deyle2011}. 


Leveraging delay coordinates  to construct predictive models of dynamical systems has been a topic of great interest. As an example, such models have been studied extensively in the time series analysis community via the well-known family of autoregressive and moving average (ARMA) models~\cite{box2015time}. In the machine learning community, related ideas are used in feedforward neural networks (FNN) that augment input dimensions with time delays~\cite{frank2001time}, time-delay neural networks (TDNN)~\cite{lang1990time,peddinti2015time,bromley1994signature} that statically perform convolutions in time, and the family of recurrent neural networks (RNN)~\cite{goodfellow2016deep} that dynamically perform non-linear convolutions in time~\cite{ma2018model}. In a dynamical systems context, time delays are leveraged in higher order or Hankel Dynamic Mode Decomposition ~\cite{le2017higher,arbabi2017ergodic,brunton2017chaos}. Although in essence, each community relies on approximations with time-delays, the focus is typically on different aspects: the time series community focuses on stochastic problems, and prefer explicit and interpretable models~\cite{box2015time}; the machine learning community is typically more performance-driven and focuses on minimizing the error and scalability~\cite{peddinti2015time}; the dynamical systems community is focused on the regulated, continuous dynamical system and interpretability of temporal behavior in terms of eigenvalues and eigenvectors~\cite{kaiser2018sparse}. Moreover, the scientific computing community emphasizes very high dimensional settings, as exemplified by fluid dynamics.

A relevant and outstanding question in each of the aforementioned contexts is the following: \emph{Given time series data from a non-linear dynamical system, how much memory is required to accurately recover the underlying dynamics, given a model structure?} The memory can be characterized by the two hyperparameters, namely \emph{the number of time delays} and the \emph{corresponding data sampling intervals, if uniformly sampled}. Takens embedding theorem~\cite{takens1981detecting} proved the generic existence of a time delayed system with $L=\lceil{2n_{box}}\rceil$ delays, where ($n_{box}$ is box counting dimension of the attractor, given the model has enough \emph{non-linearity} to approximate the diffeomorphism. However, the question of how to determine the number of time delays and sampling rate  is not well-addressed. Given $n_{box}$ as  the box counting dimension of the attractor, the number of required time delays  $L_{takens} = \lceil{2n_{box}}\rceil$  is rather conservative~\cite{gilmore2003topology}. For example, it is both well known in practice and shown analytically~\cite{pan2018_ddc}, that a typical chaotic Lorenz attractor with box counting dimension $\approx 2.06$~\cite{mcguinness1983fractal} can be well embedded with $L=2$, i.e., an equivalent 3D time delay system, while $L=4$ is required from Takens embedding theorem.


However, other than acknowledging a diffeomorphism, the Takens embedding theorem does not posit any constraints on the mapping from time delay coordinates to the original system state. Clearly, the required number of time delays depends on the richness (non-linearity) of the embedding. 
In general, for nonlinear models, the determination of the time delays becomes a problem of phase-space reconstruction~\cite{frank2001time,abarbanel1993analysis}. Popular methods include the false nearest neighbor method~\cite{kennel1992determining}, singular value analysis~\cite{broomhead1989time}, averaged mutual information~\cite{sugihara1990g}, saturation of system invariants~\cite{abarbanel1993analysis}, box counting methods~\cite{sauer1993many}, correlation integrals~\cite{kim1999nonlinear}, standard model selection techniques~\cite{cao1997practical}, and even reinforcement learning~\cite{liu2007rldde}. On the other hand, for linear models, criteria based on statistical significance such as the model utility F-test~\cite{lomax2013statistical} or information theoretic techniques such as AIC/BIC~\cite{box2015time} are used. The use of the partial autocorrelation in linear autoregressive (AR) models to  determine the number of delays can be categorized as a model selection approach. It should be mentioned that by treating the models as a black-box, a general approach such as cross validation can be leveraged. 

When the sampling rate is fixed, the question of the number of time delays required should not be confused with the length of statistical dependency between the present  and past states on the trajectory. For example, an AR(2) model can have a long time statistical dependency, but the number of time delays in the model may be very small. Indeed, it has been explicitly shown~\cite{pan2018_ddc} that for a non-linear dynamical system with dual linear structure,  embedding the memory in a dynamic fashion requires a much smaller number of delays compared to a prescribed static model structure~\cite{gouasmi2017priori}. 

From the viewpoint of discovering the dynamics of a partially observed system, the goal is to determine the non-linear convolution operator~\cite{hald,gouasmi2017priori} or the so-called closure dynamics~\cite{pan2018_ddc}. 
It has to be recognized that the number of time delays will also be dependent on the specific structure of the model. The interchangeability between the number of distinct observables and the number of time delays is also reflected in Takens' original work on the embedding theorem~\cite{takens1981detecting}. Such interchangeability with the latent space dimension is also explored in closure dynamics~\cite{pan2018_ddc,gouasmi2017priori,parish2018residual} and recurrent neural networks~\cite{goodfellow2016deep}. Since the required number of delays is strongly dependent on the model structure, it is prudent to first narrow down to a specific type of model, and then determine the delays needed. 

\textcolor{black}{The connection between time delay embedding and the Koopman operator is elucidated by Brunton et al.~\cite{brunton2017chaos}. Further theoretical investigations were conducted by Arbabi and Mezi\'c~\cite{arbabi2017ergodic}.} 
For an ergodic dynamical system, assuming that the observable belongs to a finite-dimensional Koopman invariant subspace $\mathcal{H}$, they showed that Hankel-DMD, a linear model \textcolor{black}{(first proposed and connected to ERA~\cite{juang1985eigensystem}/SSA~\cite{vautard1992singular} by Tu et al.~\cite{tu2013dynamic})}, can provide an exact representation of the Koopman eigenvalues and eigenfunctions in $\mathcal{H}$. This pioneering work, together with several  numerical investigations on the application of Hankel-DMD to non-linear dynamical systems~\cite{champion2018discovery,le2017higher,brunton2017chaos} and theoretical studies on time-delayed observables using singular value decomposition (SVD)~\cite{kamb2018time} highlight the ability of linear time delayed models to represent non-linear dynamics. From a heuristic viewpoint, SVD has been demonstrated~\cite{broomhead1989time,broomhead1986extracting,gibson1992analytic} to serve as a practical guide to determine the required number of time delays and sampling rate, for linear models. 

It should be noted that much of the literature~\cite{tu2013dynamic,schmid2010dynamic,brunton2013compressive} related to DMD and Hankel-DMD consider SVD projection either in the time delayed dimension (e.g. singular spectrum analysis) or the state dimension. SVD can   provide   optimal linear coordinates to maximize signal-to-noise ratio~\cite{gibson1992analytic}, and thus promote robustness and efficiency.  On the other hand, projection via Fourier transformation enables the possibility of additional theoretical analysis. For instance, Fourier-based analysis of the Navier--Stokes equations include non-linear triadic wave interactions~\cite{pope_2000} and decomposition into solenoidal and dilatational components~\cite{pan2017role}. Pertinent to the present work, ergodic systems characterized by periodic or quasi-periodic attractors have been shown to be well approximated by Fourier analysis~\cite{schilder2006fourier,rowley2009spectral,mezic2005spectral}. Fourier analysis has also been employed to approximate  the transfer function to obtain an intermediate discrete-time reduced order model with stability guarantees for very large scale linear systems~\cite{willcox2005fourier,gugercin2008krylov}.
For general phase space reconstruction, asymptotic decay rates from Fourier analysis have been leveraged to infer appropriate sampling intervals and number of delays~\cite{lipton1996reconstructing}. 
We thus leverage a Fourier basis representation to uncover the structure of time delay embeddings in  \emph{linear models} of non-linear dynamical systems. We also address related issues of numerical conditioning. It should be emphasized that this work is purely concerned with deterministic linear models and noise free data. It can also be shown that SVD becomes equivalent to Fourier analysis in the limit of large windows~\cite{gibson1992analytic}. 

The manuscript is organized as follows: The problem formulation and model structure is presented in \Cref{sec:linear_model_desc}. Following this, the Fourier transformation of the problem and main theoretical results regarding  the minimal time delay embedding for both scalar  and vector time series together with  explicit, exact solutions of the delay transition matrix after Fourier transformation are presented in
\Cref{sec:theory,sec:vec_case}. 
Modal decompositions related to the Koopman operator is described in \Cref{sec:connect_koopman}.
Numerical implementation and theoretical results related to conditioning issues is presented and verified numerically in \Cref{sec:experiments}, while applications on several nonlinear dynamical systems are displayed in \Cref{sec:application_nonlinear_system}.
The main contributions of the work are summarized in
\Cref{sec:conclusions}.

\section{Linear model with time-delay embedding}
\label{sec:linear_model_desc}

Consider a continuous autonomous dynamical system,
\begin{equation}
    \label{eq:dynamics}
    \frac{d}{dt} \mathbf{x} = \mathbf{F}(\mathbf{x}(t)), 
\end{equation}
on a state space $\mathcal{M} \subset \mathbb{R}^J$, $J \in \mathbb{N}^+$, where $\mathbf{x}$ is the coordinate vector of the state, $\mathbf{x} \in \mathcal{M}$, $\mathbf{F}(\cdot): \mathcal{M} \mapsto \mathbb{R}^{J}$ is in $C^{\infty}$. Denote $\phi_t(\mathbf{x}_0)$, i.e., the flow generated by \Cref{eq:dynamics} as the state at time $t$ of the dynamical system that is initialized as  $\mathbf{x}(0) = \mathbf{x}_0 \in \mathcal{M}$. By uniformly sampling with time interval $\Delta t$, the trajectory data of the dynamical systems can be obtained as $\{\mathbf{x}_j\}_{j=0}^{\infty}$, where $\mathbf{x}_j \triangleq \mathbf{x}(j\Delta t), j \in \mathbb{N}$. 

The aforementioned linear model with time-delay embedding order $L$ \emph{assumes} that the predicted future state $\mathbf{\hat{x}}_{j+1}$ is a sum of $L+1$ linear mappings from the present state $\mathbf{x}_{j}$ and previous $L$ states $\{\mathbf{x}_{j-l}\}_{l=1}^{L}$, $j \in \mathbb{N}$, 
\begin{equation}
\label{eq:linear_ass}
\mathbf{\hat{x}}_{j+1} = \mathbf{W}_0 \mathbf{x}_j + \mathbf{W}_1 \mathbf{x}_{j-1} + \ldots + \mathbf{W}_{L} \mathbf{x}_{j-L},
\end{equation}
where   $\mathbf{W}_l \in \mathbb{R}^{J \times J}$ is the associated weight matrix for the $l$-th time-delay snapshot, $l=0,\ldots,L$. 
As a side note, many data-driven models such as \textcolor{black}{ERA}, AR, VAR~\cite{box2015time}, SSA~\cite{vautard1992singular}, HAVOK~\cite{brunton2017chaos}, Hankel-DMD~\cite{arbabi2017ergodic} or HODMD~\cite{le2017higher}, can be derived from the above setup by \textcolor{black}{leveraging impulse response data,} introducing stochasticity, analyzing the eigenspectrum on the principal components, or adding intermittent forcing as inputs.

Given $M$ snapshots, the goal is to determine the weight matrices that result in the best possible  approximation $\mathbf{\hat{x}}_{j+1}$ to the true future state $\mathbf{x}_{j+1}$ in  a priori $L_2$ sense, i.e.,
\begin{widetext}
\begin{equation}
\label{eq:argmin_W}
\mathbf{W}_0,\ldots,\mathbf{W}_L = \argmin_{\substack{{\{\mathcal{W}_i\}_{i=0}^{L} \in \mathbb{R}^{J\times J}}}} 
\Bigg\Vert
\begin{bmatrix}
\mathcal{W}_L & \ldots & \mathcal{W}_0
\end{bmatrix} 
\begin{bmatrix}
\mathbf{x}_0 & \ldots & \mathbf{x}_{M-2-L}  \\
\vdots       & \vdots & \vdots  \\
\mathbf{x}_L & \ldots & \mathbf{x}_{M-2}
\end{bmatrix}
 - 
\begin{bmatrix}
\mathbf{x}_{L+1} & \ldots & \mathbf{x}_{M-1} 
\end{bmatrix} 
 \Bigg\Vert_F,
\end{equation}
\end{widetext}
if the minimizer is unique. Otherwise, 
\begin{align}
\label{eq:argmin_W_minimal}
& \mathbf{W}_0, \ldots, \mathbf{W}_L = \argmin_{\substack{{\mathcal{W}_0, \ldots, \mathcal{W}_L \in \mathbb{R}^{J\times J}}}} 
\lVert
\begin{bmatrix}
\mathcal{W}_L & \ldots & \mathcal{W}_0
\end{bmatrix} 
 \rVert_F, \\
 & \nonumber \textrm{ subject to } \\
 & \nonumber
\begin{bmatrix}
\mathcal{W}_L & \ldots & \mathcal{W}_0
\end{bmatrix}
\begin{bmatrix}
\mathbf{x}_0 & \ldots & \mathbf{x}_{M-2-L}  \\
\vdots       & \vdots & \vdots  \\
\mathbf{x}_L & \ldots & \mathbf{x}_{M-2}
\end{bmatrix}
 =
\begin{bmatrix}
\mathbf{x}_{L+1}  & \ldots & \mathbf{x}_{M-1} 
\end{bmatrix}.
\end{align}
The analytical solution of the above optimization in \Cref{eq:argmin_W,eq:argmin_W_minimal} is simply the pseudoinverse with SVD~\cite{schmid2010dynamic}, with trunctation for robustness. However,  straightforward SVD computation of the $L$ time-delay matrix for large-scale dynamical systems, e.g., fluid flows $J\sim O(10^6)$ with $L\sim O(10^2)$, is challenging. It is therefore prudent to perform spatial truncation using the SVD computed from $\{\mathbf{x}_j\}_{j=0}^{M-1}$ that reduces the dimension from $J$ to $r$ ($r \ll J$ and $r \le \min(J, M)$) and then perform the above optimizations with $L$ time-delays on the $r$-dimensional system~\cite{le2017higher}.




\subsection{Illustrative example and simplified consideration for analysis}
\label{sec:motivation}

Consider a scalar non-linear periodic trajectory, 
\begin{equation}
x(t) = \cos(t) \sin(\cos(t)) + \cos(t/5),
\end{equation}
where $t \in [0, 40]$. \Cref{fig:motivate_quasi} shows the result of a posteriori prediction using a linear model with $L=1$ and $L=12$ trained only on $t \in [0, 6]$ with 60 uniform samples. Considering that training data in the above example only covers  $[0.6, 1.8]$, the prediction of the trajectory over $[-0.9, 1.8]$,  maybe somewhat surprising.  \textcolor{black}{Although the increased expressiveness with time delay embedding have been reported~\cite{kutz2016dynamic,le2017higher}, reported investigations of the ability of temporal extrapolation are mostly empirical~\cite{le2017higherextra,beltran2018temporal}.}  Note that popular non-linear models, e.g., neural network-based models~\cite{pan2018long,pan2020physics}, despite their property of universal approximation \footnote{This problem can be viewed as an example of \emph{no free lunch} theorem~\cite{wolpert1997no}}, are  trustworthy only within the range of training data. In the present context, this means they are only suitable when training data approximately covers the whole data distribution. 

\begin{figure}[htbp]
  \centering
    \includegraphics[width=1\linewidth]{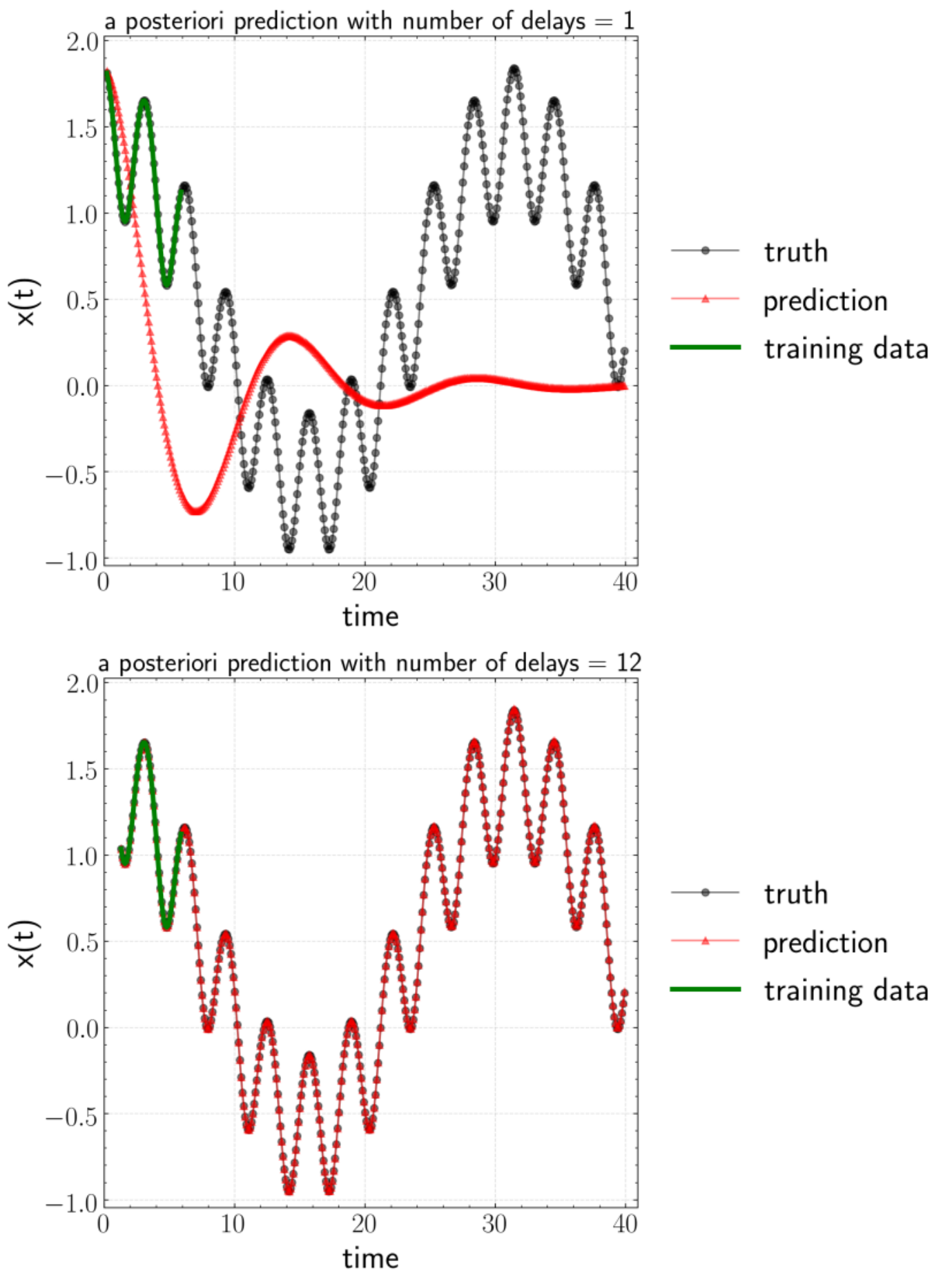}
  \caption{A posteriori prediction on non-linear periodic system with limited training horizon. Top: $L=1$. Bottom: $L=12$.}
  \label{fig:motivate_quasi}
\end{figure}

To provide  insight into role of time-delays, we consider the following simplification for the ease of analysis: we restrict ourselves to the dynamics on a periodic attractor, for which  one can determine an arbitrarily close Fourier interpolation in time at a  uniform sampling rate~\cite{attinger1966use}. 
In addition, without loss of generality, we assume that the data has zero mean, i.e., $\int_{\mathbb{R}^{+}} \mathbf{x}(\tau) d\tau = \mathbf{0}$. We start with the scalar case,  and extend the corresponding results to the vector case  $\mathbf{x} \in \mathbb{R}^J$ in \Cref{sec:vec_case}. Note that the data is collected by uniformly sampling a $T$-periodic time series $x(t) \in \mathbb{R}$. The number of samples per period is $M$, with uniform sampling interval $\Delta t = T/M$. Without loss of generality, we assume that sampling is initiated at $t = 0$, $x_k = x(t_k)$, $t_k = k \Delta t$, $k \in \mathcal{I}_M$, $\mathcal{I}_M = \{  0,1,\ldots, M-1 \}$, and $T$ is the smallest positive real number that represents the periodicity.

\subsection{Projection of the trajectory on a Fourier basis}
\label{sec:fourier_proj}
With the simplifications in \Cref{sec:motivation}, we consider a surrogate signal of $x(t)$: $S_M(t)$

\begin{equation}
\label{eq:sm}
S_M (t) = \sum_{i\in \mathcal{I}_M} a_i e^{-j\frac{2\pi i t}{T}} \ \ \textrm{with}  \ \
a_i = \frac{1}{M} \sum_{k\in \mathcal{I}_M} x_k e^{j \frac{2\pi k i}{M}} \in \mathbb{C},
\end{equation}
where $j =\sqrt{-1}$ and  
\begin{equation}
\label{eq:x_k_s_m}
\forall k \in \mathcal{I}_M, \quad x_k = x(k\Delta t) = S_M(k\Delta t),
\end{equation}

which is obtained by projecting  $\mathbf{x}(t)$ on the following linear space $\mathcal{H}_F$ 
\begin{equation}
\label{eq:fb}
    \mathcal{H}_{F} = \spn \{1, e^{ -j \frac{2\pi t}{T}}, \ldots,  e^{ -j \frac{2\pi (M-1) t}{T}}\},
\end{equation}
which is spanned by the Fourier basis in \Cref{eq:fb} with test functions as delta functions as $\delta(t - t_k), k\in\mathcal{I}_M$.
This process is equivalent to the discrete Fourier transform (DFT).

The above procedure naturally represents the uniformly sampled trajectory in the time domain $\{ x_k\}_{k=0}^{M-1}$ using coefficients in the frequency domain $\{ a_i \}_{i=0}^{M-1}$.
Since we consider real signals, $\{a_i\}_{i=0}^{M-1}$ possess reflective symmetry:
$\forall i \in \mathcal{I}_M$, $\textrm{Re}(a_i) = \textrm{Re}(a_{M-i})$, $\textrm{Im}(a_i) + \textrm{Im}(a_{M-i})=0$,
where $\textrm{Re}$ and $\textrm{Im}$ represent the real and imaginary part of a complex number. 
In addition, since $T$ is the smallest period by definition, we must have $a_1 = \overline{a_{M-1}} \neq 0$.  
Further,
since $\mathbf{F}$ is smooth, the flow $\phi_t(\mathbf{x}_0) = \mathbf{x}(t)$ is also smooth in $t$~\cite{nijmeijer1990nonlinear}. Thus, the error in the Fourier interpolation is uniformly bounded by twice  the sum of the absolute value of truncated Fourier coefficients~\cite{boyd2001chebyshev}. This leads to the uniform convergence
\begin{equation}
\label{eq:dft}
\lim_{M\xrightarrow{}\infty} \vert x(t) - S_M (t) \vert = 0.
\end{equation}
Hence, one can easily approximate the original periodic trajectory uniformly to the desired level of accuracy by increasing $M$ above a certain threshold.


 




\section{The structure of time delay embedding for scalar  time series}
\label{sec:theory}

Now, we apply the linear model with time-delay embedding (\Cref{eq:linear_ass})  at the locations  $\{x_k\}_{k=0}^{M-1}$. 
Given $\{ x_k\}_{k=0}^{M-1}$, consider constructing  $L$-time delays of $x(t)$, $L\in \mathbb{N}$. Note that $L=0$ corresponds to no   delays considered. To avoid negative indices,  we utilize the modulo operation defined in \Cref{eq:mod}, 
\begin{equation}
\label{eq:mod}
\textrm{$\forall q \in \mathbb{N}$, }
    \mathcal{P}(q) \triangleq q \Mod{M} = 
\begin{cases}
    q,& \text{if } q \in \mathcal{I}_M, \\
    q - M \left \lfloor q/M \right\rfloor,              & \text{otherwise}
\end{cases}
\end{equation}
to construct the $L$ time-delay vector $\mathbf{Y}_k$,  
\begin{equation}
\label{eq:Y_k}
\mathbf{Y}_k = 
\begin{bmatrix}
x_{\mathcal{P}(k)} \\
x_{\mathcal{P}(k-1)} \\
\vdots \\
x_{\mathcal{P}(k-L)}
\end{bmatrix} \in \mathbb{R}^{L+1},
\end{equation}
where $k \in \mathcal{I}_M$, $\left\lfloor \cdot \right\rfloor$ is the floor function. 
Considering Fourier interpolation,  we have 
\begin{equation}
  \forall q \in \mathcal{I}_M, \ \ \ \   x_{\mathcal{P}(q)} = \sum_{i\in \mathcal{I}_M} a_i \omega^{qi}, \quad \omega \triangleq e^{-j\frac{2\pi}{M}} \in \mathbb{C},
\end{equation}
which is also true for $q \not\in \mathcal{I}_M$ 
\begin{align}
\nonumber
x_{\mathcal{P}(q)} &= S_M((q - M\left\lfloor q/M\right\rfloor)\Delta t) = \sum_{i\in \mathcal{I}_M} a_i e^{-j\frac{2\pi i (q - M \left\lfloor q/M \right\rfloor)}{M}} \\
&= \sum_{i\in \mathcal{I}_M} a_i \omega^{qi}.    
\end{align}

Using \Cref{eq:sm}, we can rewrite the $L$ time-delay vector $\mathbf{Y}_k$ in \Cref{eq:Y_k} in the Fourier basis as
\begin{equation}
\label{eq:Y_k_stack}
\mathbf{Y}_k = 
\mathbf{\Omega}_{k,L}
\mathbf{a},
\end{equation}
where $\forall k \in \mathcal{I}_M$, $\mathbf{\Omega}_{k,L} \triangleq \begin{bmatrix}
1 & \omega^k & \omega^{2k} & \ldots & \omega^{(M-1)k} \\
\vdots & \vdots & \vdots & \ddots & \vdots \\
1 & \omega^{k-L} & \omega^{2(k-L)} & \ldots & \omega^{(M-1)(k-L)}
\end{bmatrix}$, $\mathbf{a} \triangleq \begin{bmatrix}
a_0 \\
\vdots\\
a_{M-1}
\end{bmatrix}\in \mathbb{C}^{M \times 1}$.

The problem of the minimal time delay required for the linear model with $L$ time  delays  in \Cref{eq:linear_ass} to \emph{perfectly} predict the data $\{x_k\}_{k=0}^{M-1}$ is equivalent to the existence of the {\em delay transition matrix} $\mathbf{K}$ such that, 
\begin{equation}
\label{eq:X_K_Y}
x_{\mathcal{P}(k+1)} = \mathbf{K}^\top \mathbf{Y}_{k},  \ \ \forall{k} \in \mathcal{I}_M,
\end{equation}
where \[\mathbf{K} = 
\begin{bmatrix}
K_0 & K_1 & \ldots & K_{L}
\end{bmatrix}^\top \in \mathbb{R}^{(L+1) \times 1},
\] and 
\begin{equation}
\label{eq:x_k_p_1}
x_{\mathcal{P}(k+1)} = 
\mathbf{\Upsilon}_k^\top
\mathbf{a},
\end{equation}
where 
\begin{equation}
\label{eq:gamma_k}
    \mathbf{\Upsilon}_k \triangleq \begin{bmatrix}
1 & \omega^{k+1} & \omega^{2(k+1)} & \ldots & \omega^{(M-1)(k+1)} \\
\end{bmatrix}^\top.
\end{equation}

For convenience, we vertically stack \Cref{eq:X_K_Y}  $\forall k \in \mathcal{I}_M$, 
\begin{equation}
    \label{eq:X_K_Y_stacked}
    \mathbf{Y}_M
    \mathbf{K} =
    \mathbf{x}_M,
\end{equation}
where
$
\mathbf{Y}_M \triangleq
    \begin{bmatrix}
        \mathbf{Y}_0^\top \\
        \mathbf{Y}_1^\top \\
        \vdots \\
        \mathbf{Y}_{M-2}^\top \\
        \mathbf{Y}_{M-1}^\top
    \end{bmatrix},
$
$
    \mathbf{x}_M \triangleq 
    \begin{bmatrix}
        x_1 \\ 
        x_2 \\ 
        \vdots \\ 
        x_{M-1} \\ 
        x_{0}
    \end{bmatrix}.
$

In the following subsections, we  discuss the minimal number of required time delays, the exact solution of $\mathbf{K}$ and the number of samples required on the time domain.

\subsection{Minimal number of time delays}

Our goal is to determine the minimal number of time delays $L$, such that there exists a  matrix $\mathbf{K}$ that satisfies the linear system \Cref{eq:X_K_Y}. Given one period of data, we can transform the system from the time domain  to the spectral domain. 
Consider \Cref{eq:Y_k_stack,eq:x_k_p_1}, then \Cref{eq:X_K_Y_stacked} is equivalent to the following,  $\forall{k} \in \mathcal{I}_M$:
\begin{equation}
\label{eq:decompose_fourier_space}
\mathbf{a}^\top 
\left(\begin{bmatrix}
1 \\ 
\omega^{k+1} \\ 
\omega^{2(k+1)} \\
\vdots \\ 
\omega^{(k+1)(M-1)}
\end{bmatrix}
-
\begin{bmatrix}
1 & \ldots  & 1\\ 
\omega^k & \ldots & \omega^{k-L}\\ 
\omega^{2k} & \ldots & \omega^{2(k-L)}\\ 
\vdots & \ddots & \vdots \\ 
\omega^{(M-1)k} & \ldots &  \omega^{(M-1)(k-L)}
\end{bmatrix} 
\mathbf{K} \right) = 0.
\end{equation}
This can be written as
\begin{widetext}
\begin{equation}
\label{eq:general_physics_to_freq}
 \mathbf{a}^\top 
\left(
\begin{bmatrix}
1 & & & & \\ 
& \omega & &  & \\
& & \omega^{2} & &\\
& & & \ddots &  \\
& & &  & \omega^{(M-1)}
\end{bmatrix}^k
\left(
\begin{bmatrix}
1 \\ 
\omega \\ 
\omega^{2} \\
\vdots \\ 
\omega^{M-1}
\end{bmatrix}  \\
- 
\begin{bmatrix}
1 & \ldots  & 1\\ 
1 & \ldots & \omega^{-L}\\ 
1 & \ldots & \omega^{2(-L)}\\ 
\vdots & \ddots & \vdots \\ 
1 & \ldots &  \omega^{(M-1)(-L)}
\end{bmatrix} \mathbf{K} \right)
 \right) = 0.
\end{equation}
\end{widetext}
We define the residual matrix $\mathbf{R}$ as, 
\begin{equation}
\mathbf{R}  \triangleq
\begin{bmatrix}
1 \\ 
\omega \\ 
\omega^{2} \\
\vdots \\ 
\omega^{M-1}
\end{bmatrix}
- 
\begin{bmatrix}
1 & 1 & \ldots  & 1\\ 
1 & \omega^{-1} & \ldots & \omega^{-L}\\ 
1 & \omega^{-2}& \ldots & \omega^{2(-L)}\\ 
\vdots & \vdots & \ddots & \vdots \\ 
1 &  \omega^{-(M-1)} & \ldots &  \omega^{(M-1)(-L)}
\end{bmatrix} \mathbf{K}.
\end{equation}

Given one period of data, we vertically stack the above equation for each $k \in \mathcal{I}_M$. Recognizing the non-singular nature of a Vandermonde square matrix with distinct nodes, we have
\begin{equation}
\label{eq:general_linear_system}
\begin{bmatrix}
a_0 & a_1 & a_2 & \ldots & a_{M-1} \\
a_0 & \omega a_1 & \omega^2 a_2 & \ldots & \omega^{M-1} a_{M-1} \\
a_0 & \omega^2 a_1 & \omega^4 a_2 & \ldots & \omega^{2(M-1)} a_{M-1} \\
\vdots & \vdots & \vdots & \ddots & \vdots\\
a_0 & \omega^{M-1} a_1 & \omega^{2(M-1)} a_2 & \ldots & \omega^{(M-1)(M-1)} a_{M-1} 
\end{bmatrix}
\mathbf{R} = \bm{0}.    
\end{equation}
This gives
\begin{align}
\label{eq:above_residual_R}
\begin{bmatrix}
1 & 1  & \ldots & 1 \\
1 & \omega   & \ldots & \omega^{M-1}  \\
\vdots &  \vdots & \ddots & \vdots\\
1 & \omega^{M-1} & \ldots & \omega^{(M-1)(M-1)} 
\end{bmatrix}
\begin{bmatrix}
a_0 & & &  & \\
& a_1 & & & \\
& & & \ddots & \\
& & & & a_{M-1}
\end{bmatrix}
 \mathbf{R}  =  \bm{0},
\end{align}
and thus
\begin{equation}
\label{eq:residual_R}
\begin{bmatrix}
a_0 & & &  & \\
& a_1 & & & \\
& & & \ddots & \\
& & & & a_{M-1}
\end{bmatrix}
\mathbf{R} = \bm{0}.
\end{equation}

Note the equivalence between \Cref{eq:residual_R} and \Cref{eq:X_K_Y_stacked}. 
Now, we consider the case when the Fourier spectrum is sparse with $P$ non-zero coefficients, $P \in \mathbb{N}$ and $P \le M$.  Moreover, it is consistent with the finite point spectral resolution of Koopman operator appears in the laminar unsteady flows~\cite{mezic2013analysis}.
Denote the set of wave numbers associated with non-zero coefficients as,   \begin{equation}
    {\mathcal{I}}^P_{M} \triangleq \{ a_i \neq 0 \vert i \in \mathcal{I}_M \} = \{ i_p \}_{p=0}^{P-1},
\end{equation}
with ascending order 
$ 0 \le i_{0} < i_{1} < \ldots < i_{P-1} \le M-1$, 
where 
$\vert \hat{\mathcal{I}}^P_{M} \vert = P \in \mathbb{N}$.  Note that there is a reflective symmetry restriction on the Fourier spectrum. 

The feasibility of using the number of time delays $L$ to ensure the existence of a \emph{real} solution $\mathbf{K}$ for the linear system is equivalent to the existence of the linear system $\mathbf{R} = \mathbf{0}$ after removing the rows that correspond to zero Fourier modes in $\mathbf{R}$, denoted as $\mathbf{R}_{\mathcal{I}_{M}^P}$, 
\begin{equation}
\label{eq:linear_system_sparse}
\mathbf{R}_{\mathcal{I}_{M}^P} = \mathbf{0} \iff \mathbf{A}_{{\mathcal{I}}^P_{M},L} \mathbf{K} = \mathbf{b}_{{\mathcal{I}}^P_{M}},
\end{equation}
where 
\begin{equation}
\mathbf{A}_{{\mathcal{I}}^P_{M},L} =   
\begin{bmatrix}
1 & \omega^{-i_0} & \ldots  & \omega^{-Li_0}\\ 
1 & \omega^{-i_1} & \ldots  & \omega^{-Li_1}\\ 
1 & \omega^{-i_2} & \ldots  & \omega^{-Li_2}\\ 
\vdots & \vdots & \ddots & \vdots \\ 
1 & \omega^{-i_{P-1}} & \ldots  & \omega^{-Li_{P-1}}\\ 
\end{bmatrix} \in \mathbb{C}^{P\times (L+1)}, 
\end{equation}
and
\begin{equation}
\mathbf{b}_{{\mathcal{I}}^P_{M}} = \begin{bmatrix}
\omega^{i_0} \\ 
\omega^{i_1} \\ 
\omega^{i_2} \\
\vdots \\ 
\omega^{i_{P-1}}
\end{bmatrix} \in \mathbb{C}^{P\times 1}.
\end{equation}

Before presenting the main theorem~\Cref{thm:sparse_time_delay}, we define the Vandermonde matrix in \Cref{def:vandermonde} and introduce  \Cref{lem:vdm_rank} and \Cref{lem:complex_solution}.
\begin{definition} 
\label{def:vandermonde}
Vandermonde matrix with nodes as $ \alpha_0, \alpha_1, \ldots, \alpha_{M-1} \in \mathbb{C}$ of order $N$ is defined as, \[\mathbf{V}_N(\alpha_0, \alpha_1, \ldots, \alpha_{M-1}) \triangleq 
    \begin{bmatrix}
    1 & \alpha_0 & \ldots  & \alpha_0^{N-1}\\ 
    1 & \alpha_1 & \ldots & \alpha_1^{N-1}\\ 
    \vdots & \vdots & \ddots & \vdots \\ 
    1 &  \alpha_{M-1} & \ldots &  \alpha_{M-1}^{N-1}
    \end{bmatrix}.\]
\end{definition}
\begin{lemma}
\label{lem:vdm_rank}
$\forall M,N \in \mathbb{N}$, the Vandermonde matrix $\mathbf{A} = \mathbf{V}_N(\alpha_0, \alpha_1, \ldots, \alpha_{M-1})$ constructed from distinct $\{\alpha_i\}_{i\in \mathcal{I}_M}, \alpha_i \in \mathbb{C}$, has the two properties,
\begin{enumerate}
    \item $\rank(\mathbf{A}) = \min(M,N)$,
    \item if $\mathbf{A}$ has full column rank, $\forall Q \in \mathbb{N}, Q \le M$, the rank of the submatrix $\mathbf{A^\prime}$ by arbitrarily selecting $Q$ rows is $\min(Q,N)$.
\end{enumerate}
\end{lemma}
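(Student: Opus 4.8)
The plan is to reduce both statements to the classical fact that a square Vandermonde matrix built from pairwise distinct nodes is nonsingular, with determinant $\prod_{0\le p<q\le N-1}(\alpha_q-\alpha_p)\neq 0$.

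For the first property, I would start from the trivial bound $\rank(\mathbf{A})\le\min(M,N)$ that holds for any $M\times N$ matrix, and then exhibit a nonsingular square submatrix of size exactly $\min(M,N)$. If $M\le N$, I select columns $0,\dots,M-1$; the resulting $M\times M$ block is $\mathbf{V}_M(\alpha_0,\dots,\alpha_{M-1})$, which is nonsingular because the nodes are distinct, so the $M$ rows of $\mathbf{A}$ are linearly independent and $\rank(\mathbf{A})=M$. If instead $N\le M$, I select rows $0,\dots,N-1$; that block is again a square Vandermonde matrix on distinct nodes, hence nonsingular, so the $N$ columns of $\mathbf{A}$ are linearly independent and $\rank(\mathbf{A})=N$. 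Either way $\rank(\mathbf{A})=\min(M,N)$.

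For the second property, note first that full column rank of $\mathbf{A}$ forces $N\le M$ by the first property. Picking any $Q$ rows of $\mathbf{A}$ indexed by a set $\mathcal{J}\subset\mathcal{I}_M$ with $|\mathcal{J}|=Q$ yields a matrix $\mathbf{A}'$ that is itself a Vandermonde matrix $\mathbf{V}_N$ whose $Q$ nodes $\{\alpha_j\}_{j\in\mathcal{J}}$ form a subset of the original distinct nodes and are therefore still pairwise distinct. Applying the first property to $\mathbf{A}'$ then gives $\rank(\mathbf{A}')=\min(Q,N)$ at once.

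I do not anticipate a genuine obstacle here: the one non-bookkeeping ingredient is the Vandermonde determinant formula, and the rest follows from the structural observation that any row-restriction of a Vandermonde matrix is again a Vandermonde matrix on a subset of its nodes. The only points needing a moment's care are the case split in the first property (choosing columns when $M\le N$ versus rows when $N\le M$) and checking that distinctness of the nodes is preserved under restriction — both immediate.
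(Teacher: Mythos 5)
Your proof is correct and follows essentially the same route as the paper: both arguments split on whether $M\le N$ or $N\le M$, extract a nonsingular square Vandermonde block on distinct nodes to pin the rank at $\min(M,N)$, and then obtain the second property by observing that any row-restriction is again a Vandermonde matrix on a distinct subset of nodes. No gaps.
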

\begin{proof}
See \Cref{apdx:vdm_rank_proof}.
\end{proof}
\begin{lemma}
\label{lem:complex_solution}
$\forall m,n \in \mathbb{N}, \mathbf{A} \in \mathbb{R}^{m\times n}, \mathbf{b} \in \mathbb{R}^{m \times 1}$, 
$\exists \mathbf{x} \in \mathbb{C}^{n \times 1}$ s.t. $\mathbf{Ax=b} \iff \exists \mathbf{x}^\prime \in \mathbb{R}^{n \times 1}$ s.t. $\mathbf{Ax^\prime =b}$. Further, when the solution is unique, the above still holds and the solution is real.
\end{lemma}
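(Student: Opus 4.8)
The plan is elementary: exploit that $\mathbf{A}$ and $\mathbf{b}$ have real entries, so that the real and imaginary parts of a complex identity decouple into two real identities. The direction ``$\Leftarrow$'' is immediate, since a real solution is in particular a complex one. For ``$\Rightarrow$'', I would write any candidate complex solution as $\mathbf{x} = \mathbf{u} + j\mathbf{v}$ with $\mathbf{u},\mathbf{v} \in \mathbb{R}^{n}$. Because $\mathbf{A}\mathbf{u}$, $\mathbf{A}\mathbf{v}$ and $\mathbf{b}$ are all real, the equation $\mathbf{A}\mathbf{x} = \mathbf{b}$ is equivalent to the pair $\mathbf{A}\mathbf{u} = \mathbf{b}$ and $\mathbf{A}\mathbf{v} = \mathbf{0}$; in particular $\mathbf{u} \in \mathbb{R}^{n}$ already solves $\mathbf{A}\mathbf{u} = \mathbf{b}$, which establishes the equivalence of solvability over $\mathbb{R}$ and over $\mathbb{C}$.

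For the uniqueness claim, the key observation is that for a real matrix the rank, equivalently the nullspace dimension, does not depend on whether it is computed over $\mathbb{R}$ or over $\mathbb{C}$: Gaussian elimination uses only field operations and yields the same pivot pattern, so $\dim_{\mathbb{R}} \ker_{\mathbb{R}}(\mathbf{A}) = \dim_{\mathbb{C}} \ker_{\mathbb{C}}(\mathbf{A})$. Consequently the real solution set is a singleton, i.e.\ $\ker_{\mathbb{R}}(\mathbf{A}) = \{\mathbf{0}\}$, if and only if the complex solution set is a singleton, i.e.\ $\ker_{\mathbb{C}}(\mathbf{A}) = \{\mathbf{0}\}$. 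When this holds, the unique complex solution must coincide with the real solution $\mathbf{u}$ constructed above and is therefore real. (Equivalently and more directly: if $\mathbf{x} = \mathbf{u} + j\mathbf{v}$ is the unique complex solution, then $\mathbf{v} \in \ker_{\mathbb{C}}(\mathbf{A})$ and $\mathbf{u}$ is also a solution, so uniqueness forces $\mathbf{v} = \mathbf{0}$, whence $\mathbf{x} = \mathbf{u}$ is real.)

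I do not expect any genuine obstacle here; the only point that deserves a sentence of justification is the field-independence of the rank of a real matrix. This can either be cited as standard or argued in one line, either via row reduction as above or via the determinantal characterization of rank — the largest order of a nonvanishing minor is determined by the same finite set of real numbers irrespective of the ambient field $\mathbb{R} \subset \mathbb{C}$. With that in hand, both the solvability equivalence and the uniqueness/realness statement follow in a few lines.
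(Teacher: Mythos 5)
Your argument is correct and is essentially the paper's own proof: separating $\mathbf{x}=\mathbf{u}+j\mathbf{v}$ and keeping $\mathbf{u}$ is the same as the paper's construction $\mathbf{x}'=(\mathbf{x}+\widebar{\mathbf{x}})/2$, and your uniqueness argument (that $\mathbf{u}$ is also a solution, forcing $\mathbf{v}=\mathbf{0}$) matches the paper's observation that a unique complex solution cannot fail to be real. The digression on field-independence of rank is correct but unnecessary for the claim as stated.
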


\begin{proof}
See \Cref{apdx:complex_solution_proof}.
\end{proof}

\begin{theorem}
{\label{thm:sparse_time_delay}}
For a uniform sampling of $S_M(t)$ with length $M$ and $P$ non-zero coefficients in the Fourier spectrum, the minimal number of time delays $L$ for a perfect prediction, i.e., one that satisfies
\Cref{eq:X_K_Y_stacked} is $P-1$. Moreover, when $L=P-1$, the solution is unique.
\end{theorem}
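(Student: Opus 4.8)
The plan is to exploit the reduction already carried out in the text: by the chain of equivalences running from \Cref{eq:decompose_fourier_space} to \Cref{eq:linear_system_sparse}, the existence of a delay transition matrix $\mathbf{K}$ satisfying \Cref{eq:X_K_Y_stacked} is equivalent to the solvability of $\mathbf{A}_{\mathcal{I}^P_M,L}\mathbf{K} = \mathbf{b}_{\mathcal{I}^P_M}$. I would analyze this last system over $\mathbb{C}$, decide for which $L$ it is (uniquely) solvable via a rank computation, and then transfer back to a real $\mathbf{K}$ by applying \Cref{lem:complex_solution} to the real system $\mathbf{Y}_M\mathbf{K}=\mathbf{x}_M$ of \Cref{eq:X_K_Y_stacked}. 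This transfer is legitimate because $\mathbf{Y}_M\mathbf{K}=\mathbf{x}_M$ and $\mathbf{A}_{\mathcal{I}^P_M,L}\mathbf{K}=\mathbf{b}_{\mathcal{I}^P_M}$ have the same solution set in $\mathbb{C}^{L+1}$: every step connecting them is either left-multiplication by an invertible matrix (a Vandermonde matrix in the $M$-th roots of unity) or the deletion of rows that vanish identically because the associated $a_i=0$.

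Next I would write $\beta_p \triangleq \omega^{-i_p}$ for $p=0,\ldots,P-1$; these are $P$ distinct complex numbers, since the $i_p$ are distinct elements of $\mathcal{I}_M$ and $\omega$ is a primitive $M$-th root of unity. With this notation $\mathbf{A}_{\mathcal{I}^P_M,L}$ is precisely the Vandermonde matrix $\mathbf{V}_{L+1}(\beta_0,\ldots,\beta_{P-1})$, so $\rank(\mathbf{A}_{\mathcal{I}^P_M,L})=\min(P,L+1)$ by \Cref{lem:vdm_rank}. The key step is to recognize the augmented matrix $[\,\mathbf{A}_{\mathcal{I}^P_M,L}\,|\,\mathbf{b}_{\mathcal{I}^P_M}\,]$ as a Vandermonde matrix after an elementary normalization: scaling its $p$-th row by the nonzero factor $\beta_p$ sends the entry $\omega^{-li_p}=\beta_p^{\,l}$ of column $l$ to $\beta_p^{\,l+1}$ and the appended entry $\omega^{i_p}=\beta_p^{-1}$ to $\beta_p^{\,0}$; permuting that last column to the front then yields $\mathbf{V}_{L+2}(\beta_0,\ldots,\beta_{P-1})$. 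Since row scaling by nonzero scalars and column permutations preserve rank, $\rank([\,\mathbf{A}_{\mathcal{I}^P_M,L}\,|\,\mathbf{b}_{\mathcal{I}^P_M}\,])=\min(P,L+2)$, again by \Cref{lem:vdm_rank}.

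By the Rouch\'{e}--Capelli criterion, $\mathbf{A}_{\mathcal{I}^P_M,L}\mathbf{K}=\mathbf{b}_{\mathcal{I}^P_M}$ is consistent iff $\min(P,L+1)=\min(P,L+2)$, i.e.\ iff $L+1\ge P$; for $L\le P-2$ the two ranks are $L+1$ and $L+2$ and the system is inconsistent, so no perfect prediction is possible. Hence the minimal admissible number of delays is $L=P-1$. At $L=P-1$ the matrix $\mathbf{A}_{\mathcal{I}^P_M,P-1}$ is square of order $P$ with full rank $P$, so the complex system has a unique solution; propagating this through the equivalence above and the uniqueness clause of \Cref{lem:complex_solution}, the unique $\mathbf{K}$ satisfying \Cref{eq:X_K_Y_stacked} is real. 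I do not anticipate a serious obstacle --- the two lemmas supply the analytic content --- so the only points requiring care are the scaling/permutation identity that turns the augmented matrix into $\mathbf{V}_{L+2}$ and the check that the complex-to-real reduction is valid because all the solution-set-preserving manipulations are invertible.
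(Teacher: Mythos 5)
Your proposal is correct and follows essentially the same route as the paper's own proof: the same reduction to the sparse spectral system, the same rank computation via \Cref{lem:vdm_rank}, the same trick of turning the augmented matrix into $\mathbf{V}_{L+2}(\omega^{-i_0},\ldots,\omega^{-i_{P-1}})$ by row scaling (the paper phrases it as factoring out $\diag(\omega^{i_0},\ldots,\omega^{i_{P-1}})$ after moving $\mathbf{b}_{\mathcal{I}^P_M}$ to the front, which is the same operation) and a column permutation, followed by Rouch\'e--Capelli and the realness transfer via \Cref{lem:complex_solution}. No substantive differences.
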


\begin{proof}
See \Cref{apdx:sparse_time_delay_thm_proof}.
\end{proof}

From the above \Cref{thm:sparse_time_delay}, we can easily derive \Cref{prop:single,prop:full} that are intuitive. 
\begin{proposition}
\label{prop:single}
If there is only one frequency in the Fourier spectrum of $S_M(t)$, simply one time delay in the linear model is enough to perfectly recover the signal.
\end{proposition}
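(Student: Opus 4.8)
The plan is to obtain this as an immediate corollary of \Cref{thm:sparse_time_delay} by counting non-zero Fourier coefficients. First I would translate the hypothesis ``only one frequency in the Fourier spectrum of $S_M(t)$'' into a statement about the cardinality of $\mathcal{I}^P_M$. Since the data is real and zero-mean, $a_0=0$ and the coefficients obey the reflective symmetry $a_i=\overline{a_{M-i}}$; hence a single physical frequency $i_0\neq 0$ shows up as the conjugate pair $\{i_0,\,M-i_0\}$, so $P=2$, the only exception being the degenerate case $M$ even with $i_0=M/2$ (the Nyquist mode), for which $a_{M/2}$ is real and isolated and $P=1$. In every case $P\le 2$.

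Next I would invoke \Cref{thm:sparse_time_delay}: the minimal number of delays for a perfect prediction satisfying \Cref{eq:X_K_Y_stacked} is $P-1\le 1$. To conclude that $L=1$ ``is enough'' I would note the obvious monotonicity in $L$: if $\mathbf{K}\in\mathbb{R}^{(L_{\min}+1)\times 1}$ solves \Cref{eq:X_K_Y_stacked} with $L_{\min}$ delays, then appending a trailing zero produces a solution with $L_{\min}+1$ delays (the extra lagged snapshot simply carries weight zero). Since $P-1\in\{0,1\}$, one delay always suffices, and by the uniqueness clause of \Cref{thm:sparse_time_delay} together with \Cref{lem:complex_solution} the weights are real (and, when $P=2$, uniquely determined by $\mathbf{K}=\mathbf{A}_{\mathcal{I}^P_M,1}^{-1}\mathbf{b}_{\mathcal{I}^P_M}$ from \Cref{eq:linear_system_sparse}).

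For concreteness I would also exhibit the recurrence explicitly: writing the single-frequency real signal as $x_k=c\cos(2\pi i_0 k/M+\varphi)$, the identity $\cos(\theta+\psi)+\cos(\theta-\psi)=2\cos\psi\cos\theta$ gives $x_{k+1}=2\cos(2\pi i_0/M)\,x_k-x_{k-1}$, i.e.\ $K_0=2\cos(2\pi i_0/M)$, $K_1=-1$ --- a real, one-delay model, in agreement with the spectral-domain solution above. I expect the only point requiring genuine care to be the bookkeeping around the reflective symmetry and the Nyquist edge case that pins down $P\le 2$; once that is in place the statement is a direct specialization of \Cref{thm:sparse_time_delay}.
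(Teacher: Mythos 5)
Your proposal is correct and follows the same route the paper intends: Proposition~\ref{prop:single} is stated as an immediate corollary of Theorem~\ref{thm:sparse_time_delay}, obtained by observing that one physical frequency in a real signal corresponds to a conjugate pair of non-zero coefficients, hence $P\le 2$ and $L=P-1\le 1$. Your extra care with the Nyquist edge case, the zero-padding monotonicity remark, and the explicit check $K_0=2\cos(2\pi i_0/M)$, $K_1=-1$ (which indeed matches $\mathbf{A}_{\mathcal{I}^P_M,1}^{-1}\mathbf{b}_{\mathcal{I}^P_M}$) go beyond what the paper writes down but are consistent with it.
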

\begin{proposition}
\label{prop:full}
If the Fourier spectrum of $S_M(t)$ is dense, then the maximum number of time delays, i.e., over the whole period $M-1$ is necessary to perfectly recover the signal.
\end{proposition}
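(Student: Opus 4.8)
The plan is to reduce the sparse linear system \Cref{eq:linear_system_sparse} to a question about polynomial roots and then simply count. First I would set $z_p \triangleq \omega^{i_p}$ for $p=0,\dots,P-1$; these are $P$ \emph{distinct} nonzero complex numbers, since the $i_p$ are distinct modulo $M$ and $|\omega|=1$. The $p$-th row of $\mathbf{A}_{\mathcal{I}_M^P,L}\mathbf{K}=\mathbf{b}_{\mathcal{I}_M^P}$ is $\sum_{l=0}^{L}K_l z_p^{-l}=z_p$; multiplying by $z_p^{L}$ and rearranging, this is equivalent to $g(z_p)=0$, where $g(z)\triangleq z^{L+1}-\sum_{l=0}^{L}K_l z^{L-l}$ is a \emph{monic} polynomial of degree exactly $L+1$ whose coefficients of $z^{L},\dots,z^{0}$ are $-K_0,\dots,-K_L$. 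Hence $\mathbf{K}\mapsto g$ is a bijection between $\mathbb{C}^{L+1}$ and monic degree-$(L+1)$ polynomials, and a complex $\mathbf{K}$ solves the system if and only if the associated $g$ vanishes at all $P$ points $z_0,\dots,z_{P-1}$.

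With this reduction the counting is immediate. If $L\le P-2$, then $\deg g = L+1\le P-1<P$; since $g$ is monic it is nonzero, so it has at most $L+1<P$ roots and cannot vanish at the $P$ distinct points $z_p$, so no complex $\mathbf{K}$ exists. Because the complex solution set of the real system \Cref{eq:X_K_Y_stacked} coincides with that of $\mathbf{A}_{\mathcal{I}_M^P,L}\mathbf{K}=\mathbf{b}_{\mathcal{I}_M^P}$ (the DFT manipulations leading through \Cref{eq:residual_R,eq:linear_system_sparse} never used realness and are invertible), \Cref{lem:complex_solution} applied to the real pair $(\mathbf{Y}_M,\mathbf{x}_M)$ then shows no real $\mathbf{K}$ exists either. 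If $L\ge P-1$, the monic polynomial $g(z)=z^{L+1-P}\prod_{p=0}^{P-1}(z-z_p)$ has degree $L+1$ and vanishes at every $z_p$, so a complex — hence, again by \Cref{lem:complex_solution}, a real — solution exists. Therefore the minimal admissible number of delays is exactly $P-1$.

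For uniqueness at $L=P-1$ I would note $\deg g=P$, and a monic degree-$P$ polynomial vanishing at the $P$ distinct points $z_p$ must equal $\prod_{p=0}^{P-1}(z-z_p)$: any two such differ by a polynomial of degree $\le P-1$ with $P$ distinct roots, hence by the zero polynomial. Reading off the coefficients pins down $\mathbf{K}$ uniquely over $\mathbb{C}$, and the final sentence of \Cref{lem:complex_solution} guarantees this unique solution is in fact real. An equivalent, more ``linear-algebraic'' route is to observe that $\mathbf{A}_{\mathcal{I}_M^P,L}$ is the Vandermonde matrix $\mathbf{V}_{L+1}(\omega^{-i_0},\dots,\omega^{-i_{P-1}})$ with $P$ distinct nodes, so by \Cref{lem:vdm_rank} its rank is $\min(P,L+1)$: square and nonsingular when $L=P-1$ (unique solution), full row rank when $L\ge P-1$ (solvable for any right-hand side). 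The one place where care is genuinely needed — and the main obstacle if one follows the rank route rather than the polynomial route — is proving \emph{inconsistency} for $L\le P-2$: the augmented matrix $[\mathbf{A}_{\mathcal{I}_M^P,L}\mid\mathbf{b}_{\mathcal{I}_M^P}]$ is only a \emph{generalized} Vandermonde (columns with exponents $0,\dots,L$ together with $M-1$), and such matrices need not be full rank over $\mathbb{C}$, so \Cref{lem:vdm_rank} does not apply directly. The monic-polynomial argument sidesteps this entirely, which is why I would make it the backbone of the proof.
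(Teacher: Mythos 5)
Your argument is correct, and it reaches the conclusion by a genuinely different route than the paper. The paper treats \Cref{prop:full} as an immediate corollary of \Cref{thm:sparse_time_delay} (set $P=M$), and proves that theorem via the Rouch\'e--Capelli test: $\rank(\mathbf{A}_{\mathcal{I}_M^P,L})=\min(P,L+1)$ by \Cref{lem:vdm_rank}, while the augmented matrix is shown to equal $\diag(\omega^{i_0},\ldots,\omega^{i_{P-1}})\,\mathbf{V}_{L+2}(\omega^{-i_0},\ldots,\omega^{-i_{P-1}})$ and hence has rank $\min(P,L+2)$ --- precisely the factorization trick that resolves the ``generalized Vandermonde'' worry you flag at the end. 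Your monic-polynomial reduction sidesteps that step entirely: existence, non-existence for $L\le P-2$, and uniqueness at $L=P-1$ all fall out of counting roots of a degree-$(L+1)$ monic polynomial at $P$ distinct nodes, and your handling of realness via \Cref{lem:complex_solution} applied to the real time-domain system \Cref{eq:X_K_Y_stacked} (whose complex solution set is preserved by the invertible DFT manipulations leading to \Cref{eq:linear_system_sparse}) matches the paper's own closing step. Two bonuses of your route are worth noting: your $g(z)=z^{L+1}-\sum_{l=0}^{L}K_l z^{L-l}$ is exactly the characteristic polynomial of the companion matrix $\mathbf{K}_{comp}$ in \Cref{eq:augmented_scalar_Yk_Ykp1}, so the argument makes rigorous the paper's informal remark that one needs enough delays ``to match the number of eigenvalues''; and in the dense case $P=M$ the unique monic polynomial through all $M$-th roots of unity is $z^M-1$, which immediately recovers the explicit solution $\mathbf{K}=\begin{bmatrix}0&\ldots&0&1\end{bmatrix}^\top$ of \Cref{eq:solution_dense}. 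The one presentational caveat is that you prove considerably more than \Cref{prop:full} asks --- you re-derive all of \Cref{thm:sparse_time_delay} --- but the specialization to $P=M$ is immediate and nothing in the argument is circular.
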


In retrospect, the result of the minimal number of time delays for a scalar time series is rather intuitive:
any scalar signal with $R$ frequencies corresponds to a certain observable of a $2R$-dimensional linear system. Since more time delays in linear model increases the number of eigenvalues in the corresponding linear system, one requires a minimum of $L=2R-1=P-1$ to match the number of eigenvalues.

\subsection{Exact solution for the delay transition matrix $\mathbf{K}$}
Two interesting facts have to be brought to the fore: 
\begin{enumerate}
    \item From \Cref{eq:linear_system_sparse}, it is clear that $\mathbf{K}$ is independent of the \emph{quantitative value} of the Fourier coefficients, but only depends on the \emph{pattern in the Fourier spectrum}. 
    \item For $L = P-1$,  $\mathbf{A}_{\mathcal{I}_M^P,L}$ is an invertible Vandermonde matrix, which implies the uniqueness of the solution $\mathbf{K}$.
\end{enumerate}

Consider the general explicit formula for the inverse of a Vandermonde matrix~\cite{petersen2008matrix}. Note that $\mathbf{A}_{\mathcal{I}_M^P,P-1} = \mathbf{V}_P(\omega^{-i_0}, \ldots, \omega^{-i_{P-1}})$. 

Thus 
\begin{widetext}
\begin{align}
    \mathbf{A}^{-1}_{\mathcal{I}_M^P,P-1} &= \mathbf{V}_P^{-1}(\omega^{-i_0}, \ldots, \omega^{-i_{P-1}}). \\
    \nonumber
    \mathbf{V}_P^{-1}(\omega^{-i_0}, \ldots, \omega^{-i_{P-1}})_{mn}
    & = (-1)^{m+1} \frac{\displaystyle \sum_{\substack{ 0 \le k_1 < \ldots < k_{P-m} \le P-1 \\  k_1,\ldots,k_{P-m} \neq n-1 }} \omega^{-(i_{k_1} + \ldots + i_{k_{P-m}}) }}{\displaystyle \prod_{0 \le l \le P-1, l\neq n-1} \omega^{-i_{l}} - \omega^{-i_{n-1} } } .\\
    \label{eq:exact_spectral_solution}
    \mathbf{K}_{m} 
    &= \mathbf{V}_P^{-1}(\omega^{-i_0}, \ldots, \omega^{-i_{P-1}})_{mn} \mathbf{b}_{\mathcal{I}_M^P,L,n} \\
    \nonumber
    &= \sum_{n=1}^{P} (-1)^{m+1} \frac{\displaystyle \sum_{\substack{ 0 \le k_1 < \ldots < k_{P-m} \le P-1 \\  k_1,\ldots,k_{P-m} \neq n-1 }} \omega^{-(i_{k_1} + \ldots + i_{k_{P-m}}) }}{\displaystyle \prod_{0 \le l \le P-1, l\neq n-1} \omega^{-i_{l}} - \omega^{-i_{n-1} } } \omega^{i_{n-1}}\\
    \nonumber
    &= \sum_{n=1}^{P} (-1)^{m+1} \frac{\displaystyle \sum_{\substack{ 0 \le k_1 < \ldots < k_{P-m} \le P-1}} e^{\frac{j2\pi(i_{k_1} + \ldots + i_{k_{P-m}})}{M} }}{\displaystyle \prod_{0 \le l \le P-1, l\neq n-1} e^{\frac{j2\pi{i_l}}{M}} - e^{\frac{j2\pi i_{n-1}}{M}} }. 
\end{align}
\end{widetext}
where $ 1 \le m,n \le P$ and $\mathbf{K}_{m} \equiv K_{m-1}$.

Despite the explicit form, the above expression is  not useful in practice. Without loss of generality, considering $P$ is even, the computational complexity at least grows as $ \binom{P}{P/2}$. As an example, for a moderate system with 50 non-sparse modes, $\binom{50}{25} \approx 1.2 \times 10^{14}$.

\subsection{Eigenstructure of the companion matrix}

The eigenstructure of the companion matrix formed with time delays is closely related to the Koopman eigenvalues and eigenfunctions under ergodicity assumptions~\cite{arbabi2017ergodic}. From the viewpoint of HAVOK~\cite{brunton2017chaos}, for a general time delay $L$, the corresponding Koopman eigenvalues are eigenvalues of the companion matrix $\mathbf{K}_{comp}$ defined as $\mathbf{Y}^\top_{k+1}  = \mathbf{Y}^\top_k \mathbf{K}_{comp}$, where
\begin{equation}
    \label{eq:augmented_scalar_Yk_Ykp1}
    \mathbf{K}_{comp} = \begin{bmatrix}
    K_0 & 1 & 0 & \ldots & 0 \\
    K_1 & 0 & 1 & \ldots & 0 \\
    \vdots & \vdots & \vdots & \ddots & \vdots \\
    K_{L-1} & 0 & 0 & \ldots & 1 \\
    K_{L} & 0 & 0 & \ldots & 0
    \end{bmatrix} \in \mathbb{R}^{(L+1) \times (L+1)}.
\end{equation}
The corresponding eigenvalues satisfy $\det(\lambda \mathbf{I} - \mathbf{K}_{comp}) =0$, i.e., $
    \lambda^{L+1} - K_0 \lambda^{L} - \ldots - K_{L} = 0.$
The corresponding eigenstructure  is fully determined by the eigenvalues~\cite{drmavc2018data}, $\lambda_0,\ldots,\lambda_{L}$, i.e., $
    \mathbf{K}_{comp} = \mathbf{Q}^{-1} \mathbf{\Lambda} \mathbf{Q}$,
where $\mathbf{\Lambda} = \diag(\lambda_0,\ldots,\lambda_{L})$, $\mathbf{Q} = \mathbf{V}_{L+1}(\lambda_0,\ldots,\lambda_{L})$.


\subsubsection{Special case: dense Fourier spectrum}

Note that $\omega^{-M} = 1$ and $P=M$. Consider $L=P-1=M-1$, so that the last column of $A_{\mathcal{I}^P_M,L}$ becomes 
\begin{equation}
    \begin{bmatrix}
    1 \\
     \omega^{-(M-1)}\\
     \omega^{-2(M-1)}\\
     \vdots \\
     \omega^{-(M-1)(M-1))}
    \end{bmatrix} = 
    \begin{bmatrix}
    1 \\
     \omega\\
     \omega^{2}\\
     \vdots \\
     \omega^{M-1}
    \end{bmatrix} = \mathbf{b}_{\mathcal{I}^M_M}.
\end{equation}
Therefore, the unique solution can be found from observations as  
\begin{equation}
\label{eq:solution_dense}
    \mathbf{K} = \begin{bmatrix}
    0 & \ldots & 0 & 1
    \end{bmatrix}^\top.
\end{equation}
The companion matrix~\cite{arbabi2017ergodic} associated with the Koopman operator is in the form of a special circulant matrix~\cite{meyer2000matrix}, for which analytical eigenvalues and eigenvectors can be easily determined. In \Cref{eq:augmented_scalar_Yk_Ykp1}, we have
\begin{equation}
\mathbf{K}_{comp} = 
    \begin{bmatrix}
    0 & 1 & 0 & \ldots & 0 \\
    0 & 0 & 1 & \ldots & 0 \\
    \vdots & \vdots & \vdots & \ddots & \vdots \\
    0 & 0 & 0 & \ldots & 1 \\
    1 & 0 & 0 & \ldots & 0
    \end{bmatrix} \in \mathbb{R}^{M \times M},
\end{equation}
which has eigenvalues evenly distributed on the unit circle
\begin{equation}
\forall i \in \mathcal{I}_M, \quad \lambda_i = e^{-j\frac{2\pi i}{M}} = \omega^i,    
\end{equation}
and normalized eigenvectors as
\begin{equation}
    \nu_i = \frac{1}{\sqrt{M}} 
    \begin{bmatrix}
    1 & \omega^{-i} & \omega^{-2i} & \ldots & \omega^{-(M-1)i}
    \end{bmatrix}^\top.
\end{equation}

\subsection{Analysis in the time domain}

Projection of the trajectory onto a Fourier basis implies that  at least one period of training data has to be obtained to be able to construct a linear system that has a unique solution corresponding to $\mathbf{K}^*$. However, we will show that in the time domain, a full period of data is not necessary to determine the solution $\mathbf{K}^*$ if the Fourier spectrum is sparse. 

Denote the number of non-zero Fourier coefficients as $P \in \mathbb{N}$, and its index set as $\mathcal{I}_M^P$ as before. Instead of having a full period of data, without loss of generality, we consider $L$ time delays and select the $Q$ rows in \Cref{eq:X_K_Y_stacked}, for which the index is denoted as $ 0 \le k_0 < \ldots < k_{Q-1} \le M-1$, and $Q \in \mathbb{N}, L + Q \le M$. Therefore, we have the following equation in the time domain,
\begin{equation}
    \label{eq:X_K_Y_stacked_sparse}
    \begin{bmatrix}
        \mathbf{Y}_{k_0}^\top \\
        \mathbf{Y}_{k_1}^\top \\
        \vdots \\
        \mathbf{Y}_{k_{Q-2}}^\top \\
        \mathbf{Y}_{k_{Q-1}}^\top
    \end{bmatrix}
    \mathbf{K} = 
    \begin{bmatrix}
        x_{\mathcal{P}(k_0+1)} \\ 
        x_{\mathcal{P}(k_1+1)} \\ 
        \vdots \\ 
        x_{\mathcal{P}(k_{Q-2}+1)} \\ 
        x_{\mathcal{P}(k_{Q-1}+1)}
    \end{bmatrix}.
\end{equation}
Consider a Fourier transform and 
recall \Cref{eq:general_physics_to_freq}.  Choosing $k$ over $k_0,\ldots, k_{Q-1}$,  the above equation can be equivalently rewritten as
\begin{align}
    \begin{bmatrix}
    a_0 & \omega^{k_0} a_1 & \omega^{2k_0} a_2 & \ldots & \omega^{(M-1)k_0} a_{M-1} \\
    a_0 & \omega^{k_1} a_1 & \omega^{2k_1} a_2 & \ldots & \omega^{(M-1)k_1} a_{M-1} \\
    a_0 & \omega^{k_2} a_1 & \omega^{2k_2} a_2 & \ldots & \omega^{(M-1)k_2} a_{M-1} \\
    \vdots & \vdots & \vdots & \ddots & \vdots\\
    a_0 & \omega^{k_{Q-1}} a_1 & \omega^{2k_{Q-1}} a_2 & \ldots & \omega^{(M-1)k_{Q-1}} a_{M-1}
    \end{bmatrix} \mathbf{R}
    &=  \bm{0}.
\end{align}
Recall that only $P$ Fourier coefficients are non-zero, and thus the above equation that constrains $\mathbf{K}$ equivalently becomes 
\begin{widetext}
\begin{align}
    \label{eq:physics_to_fourier}
    & \begin{bmatrix}
    a_{i_0} & \omega^{k_0} a_{i_1} & \omega^{2k_0} a_{i_2} & \ldots & \omega^{(P-1)k_0} a_{i_{P-1}} \\
    a_{i_0} & \omega^{k_1} a_{i_1} & \omega^{2k_1} a_{i_2} & \ldots & \omega^{(P-1)k_1} a_{i_{P-1}} \\
    a_{i_0} & \omega^{k_2} a_{i_1} & \omega^{2k_2} a_{i_2} & \ldots & \omega^{(P-1)k_2} a_{i_{P-1}} \\
    \vdots & \vdots & \vdots & \vdots & \vdots\\
    a_{i_0} & \omega^{k_{Q-1}} a_{i_1} & \omega^{2k_{Q-1}} a_{i_2} & \ldots & \omega^{(P-1)k_{Q-1}} a_{i_{P-1}}
    \end{bmatrix}
    \mathbf{R}_{\mathcal{I}_{M}^P}
    =  \bm{0} \\ & \iff 
    \begin{bmatrix}
    1 & \omega^{k_0} & \omega^{2k_0} & \ldots & \omega^{(P-1)k_0} \\
    1 & \omega^{k_1} & \omega^{2k_1} & \ldots & \omega^{(P-1)k_1}  \\
    1 & \omega^{k_2} & \omega^{2k_2} & \ldots & \omega^{(P-1)k_2}  \\
    \vdots & \vdots & \vdots & \ddots & \vdots\\
    1 & \omega^{k_{Q-1}} & \omega^{2k_{Q-1}} & \ldots & \omega^{(P-1)k_{Q-1}}
    \end{bmatrix}
    \begin{bmatrix}
    a_{i_0} & & &  & \\
    & a_{i_1} & & & \\
    & & a_{i_2} & & \\
    & & & \ddots & \\
    & & & & a_{i_{P-1}}
    \end{bmatrix}
    \mathbf{R}_{\mathcal{I}_{M}^P}
    \nonumber =  \bm{0} \\
    \label{eq:subsample_linear}
    & \iff 
    \mathbf{V}_{P}( \omega^{k_0},\ldots,\omega^{k_{Q-1}} ) \diag (a_{i_0},\ldots,a_{i_{P-1}}) \mathbf{R}_{\mathcal{I}_{M}^P} = \mathbf{0}. 
\end{align}
\end{widetext}
Since $\{ \omega^{k_j} \}_{j=0}^{Q-1} $ are distinct from each other, from \Cref{lem:vdm_rank}, $\textrm{rank}(\mathbf{V}_{P}(\omega^{k_0},\ldots, \omega^{k_{Q-1}}) ) = \min(P, Q)$. Therefore, if we choose to \emph{have training data points no less than the number of non-zero Fourier coefficients}, i.e., $Q \ge P$, then $\mathbf{V}_{P}(\omega^{k_0},\ldots, \omega^{k_{Q-1}})$ is full rank, which leads to $\mathbf{R}_{\mathcal{I}_{M}^P} = \mathbf{0}$. Meanwhile, the solution $\mathbf{K}$ is uniquely determined given $L = P-1$. Therefore, given $Q \ge P$, 
\begin{align}
    \label{eq:X_K_Y_stacked_Q}
    \begin{bmatrix}
        \mathbf{Y}_{k_0}^\top \\
        \mathbf{Y}_{k_1}^\top \\
        \vdots \\
        \mathbf{Y}_{k_{Q-2}}^\top \\
        \mathbf{Y}_{k_{Q-1}}^\top
    \end{bmatrix}
    \mathbf{K} = 
    \begin{bmatrix}
        x_{\mathcal{P}(k_0+1)} \\ 
        x_{\mathcal{P}(k_1+1)} \\ 
        \vdots \\ 
        x_{\mathcal{P}(k_{Q-2}+1)} \\ 
        x_{\mathcal{P}(k_{Q-1}+1)}
    \end{bmatrix}
    & \iff 
    \mathbf{R}_{\mathcal{I}_{M}^P} = \mathbf{0}  {\overset{L=P-1}{\iff}} \textrm{$\mathbf{K} = \mathbf{K}^*$}
\end{align}

For the case with minimal number of data samples, i.e., $Q=P$, a natural choice is to construct $P$ rows of the future state from the $P$-th to $2P-1$-th rows in \Cref{eq:X_K_Y_stacked}. In the above setting, in order to construct the linear system in time domain that has the unique solution $\mathbf{K}^*$ of \Cref{eq:linear_system_sparse}, we only require access to the first $2P$ snapshots of data. 
The key observation is that when the signal is sparse, instead of constructing the classic unitary DFT matrix (\Cref{eq:above_residual_R} to \Cref{eq:residual_R}), a random choice of $P$ rows will be sufficient to uniquely determine a real solution $\mathbf{K}^*$. It has to be  mentioned, however, that randomly chosen data points might not be optimal. For example, in \Cref{eq:physics_to_fourier}, the particular choice of sampling (i.e. the choice of $Q$ rows),  will determine the condition number of the complex Vandermonde matrix $\mathbf{V}_{P}(\omega^{k_0},\ldots, \omega^{k_{Q-1}})$. The  necessary and sufficient condition for \emph{perfect} conditioning of a Vandermonde matrix is when $\{ \omega^{k_j} \}_{j=0}^{Q-1}$ are uniformly spread on the unit circle~\cite{berman2007perfect}.


{\color{red} 
}

At first glance, our work might appear to be in the same vein as  compressed sensing (CS)~\cite{donoho2006compressed,candes2006near} where a complete signal is extracted from only a few measurements. However, it should be emphasized that CS requires \emph{random} projections from the whole field to extract information about a broadband signal in each measurement, while we simply follow the setup in modeling dynamical systems where only \emph{deterministic} and sequential point measurements are available, and limited to a certain time interval.

Moreover, the above instance of accurately recovering the dynamical system without using a full period of data on the attractor is also reported elsewhere, for instance in sparse polynomial regression for data-driven modeling of dynamical systems~\cite{champion2018discovery}. Indeed, this is one of the key ideas behind SINDy~\cite{brunton2016discovering}: one can leverage the prior knowledge of the existence of a sparse representation (for instance, in a basis of monomials), such that sparse regression can significantly reduce the amount of data required with no loss of information. 

\section{Extension of the analysis to the vector case}
\label{sec:vec_case}
In this section, we extend the above analysis to the case of a vector dynamical system. Assuming the state  vector has $J$ components, given the time series of $l$-th component, $\{x^{(l)}_k\}_{k=0}^{M-1}$, $l =1,\ldots,J$, we have, $\forall k \in \mathcal{I}_M$
\begin{equation}
    \label{eq:Y_stacked_vector}
    \tilde{x}_{\mathcal{P}(k+1)} = 
    \begin{bmatrix}
    x_{\mathcal{P}(k+1)}^{(1)}\\
    \vdots \\
    x_{\mathcal{P}(k+1)}^{(J)}
    \end{bmatrix} \in \mathbb{R}^{J \times 1},
\end{equation}
where $k\in \mathcal{I}_M, \forall 1 \le l \le J, l\in\mathbb{N}, x^{(l)}_{\mathcal{P}(k)} \in \mathbb{R}$, $J \in \mathbb{N}$. Rewrite \Cref{eq:X_K_Y} in a vector form: 
\begin{equation}
\label{eq:X_K_Y_vector}
\tilde{x}_{\mathcal{P}(k+1)} = \mathbf{\tilde{K}}^\top \mathbf{\tilde{Y}}_{k},  \ \ \forall{k} \in \mathcal{I}_M,
\end{equation}
where $\tilde{x}_{\mathcal{P}(k+1)} \in \mathbb{R}^{J}$, $\mathbf{\tilde{K}} \in \mathbb{R}^{J(L+1) \times J}$ and
\begin{equation}
\label{eq:X_stacked_vector}
\mathbf{\tilde{Y}}_k = 
\begin{bmatrix}
\mathbf{Y}^{(1)}_k \\
\vdots \\
\mathbf{Y}^{(J)}_k
\end{bmatrix} \in \mathbb{R}^{J(L+1) \times 1},
\end{equation}
where $\mathbf{Y}^{(l)}_k$ are the $L$ time-delay embeddings defined in \Cref{eq:Y_k} for the $l$-th component of the state. In the present work, we treat the time-delay uniformly across all components. 

Following  similar procedures as before, denoting the Fourier coefficient of $l$-th component as $\mathbf{a}^{(l)} \in \mathbb{C}^{M\times 1}$, the following lemma which is an analogy to \Cref{eq:residual_R} in the scalar case. 
\begin{lemma}
\label{lem:vector_residual_R}
The necessary and sufficient condition for the existence of a real solution $\mathbf{\tilde{K}}$ in \Cref{eq:X_K_Y_vector} is equivalent to the existence of a solution for the following linear system: 
\begin{align}
    \label{eq:lem_vector_residual_R}
    & 
    \begin{bmatrix}
    \diag(\mathbf{a}^{(1)})  & 
    \ldots & 
    \diag(\mathbf{a}^{(J)})
    \end{bmatrix}
    \Bigg(
    \begin{bmatrix}
    \mathbf{b}_{\mathcal{I}_M^M} & &  \\
    & \ddots & \\
    & & \mathbf{b}_{\mathcal{I}_M^M}
    \end{bmatrix}
    \nonumber
    - \\
    & \begin{bmatrix}
    \mathbf{A}_{\mathcal{I}_M^M,L} & &  \\
    & \ddots & \\
    & & \mathbf{A}_{\mathcal{I}_M^M,L} 
    \end{bmatrix}
     \mathbf{\tilde{K}}
    \Bigg )
    = \mathbf{0}. 
\end{align}
The existence of the above solution is equivalent to the following relationship, 
    \begin{widetext}
    \begin{align}
        \label{eq:vec_lem_rank}
        & \rank \left(
        \begin{bmatrix}
        \diag(\mathbf{a}^{(1)})\mathbf{A}_{\mathcal{I}_M^M,L}   & 
        \ldots & 
        \diag(\mathbf{a}^{(J)})\mathbf{A}_{\mathcal{I}_M^M,L}
        \end{bmatrix}
        \right)\nonumber  \\
         & = 
        \rank \left(
        \begin{bmatrix}
        \diag(\mathbf{a}^{(1)})\mathbf{A}_{\mathcal{I}_M^M,L}   & 
        \ldots & 
        \diag(\mathbf{a}^{(J)})\mathbf{A}_{\mathcal{I}_M^M,L} & 
        \diag(\mathbf{a}^{(1)})\mathbf{b}_{\mathcal{I}_M^M}    & 
        \ldots & 
        \diag(\mathbf{a}^{(J)})\mathbf{b}_{\mathcal{I}_M^M} 
        \end{bmatrix}
        \right).
    \end{align}
    \end{widetext}
\end{lemma}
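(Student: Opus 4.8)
The plan is to establish the two equivalences in the statement by reducing \Cref{eq:X_K_Y_vector} to an ordinary linear system and then invoking the Rouché–Capelli (consistency) theorem for complex linear systems together with \Cref{lem:complex_solution}. First I would mirror the scalar derivation that led from \Cref{eq:X_K_Y} to \Cref{eq:residual_R}: for each component $l=1,\ldots,J$, write the $L$-delay embedding $\mathbf{Y}^{(l)}_k$ in the Fourier basis via \Cref{eq:Y_k_stack} using the coefficients $\mathbf{a}^{(l)}$, and write the target $\tilde x^{(l)}_{\mathcal{P}(k+1)}$ via \Cref{eq:x_k_p_1}. Stacking \Cref{eq:X_K_Y_vector} over all $k\in\mathcal{I}_M$ and applying the same left-multiplication by the (nonsingular) DFT Vandermonde matrix that was used in \Cref{eq:general_linear_system,eq:above_residual_R,eq:residual_R}, the physical-space condition collapses, row block by row block, to the frequency-domain condition \Cref{eq:lem_vector_residual_R}: the block $\diag(\mathbf{a}^{(l)})$ carries the spectral content of the $l$-th component, $\mathbf{b}_{\mathcal{I}_M^M}$ is the shifted target vector in the spectral domain, and the block-diagonal $\mathbf{A}_{\mathcal{I}_M^M,L}$ encodes the delay structure. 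This step is essentially bookkeeping — the only care needed is in the block arrangement of $\mathbf{\tilde{K}}$ (which is $J(L+1)\times J$, so the equation is really $J$ simultaneous right-hand sides), but since the argument is identical for each of the $J$ columns of $\mathbf{\tilde{K}}$ and each column of the target, it suffices to treat one column and take direct sums.

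Having reduced the problem to the existence of $\mathbf{\tilde{K}}$ solving \Cref{eq:lem_vector_residual_R}, I would next pass from a \emph{real} solution to a \emph{complex} solution at no cost: the matrix $\begin{bmatrix}\diag(\mathbf{a}^{(1)})\mathbf{A}_{\mathcal{I}_M^M,L}&\ldots&\diag(\mathbf{a}^{(J)})\mathbf{A}_{\mathcal{I}_M^M,L}\end{bmatrix}$ and the right-hand side $\begin{bmatrix}\diag(\mathbf{a}^{(1)})\mathbf{b}_{\mathcal{I}_M^M}&\ldots&\diag(\mathbf{a}^{(J)})\mathbf{b}_{\mathcal{I}_M^M}\end{bmatrix}$ have complex entries, but because of the conjugate (reflective) symmetry of the Fourier coefficients of real signals noted after \Cref{eq:dft}, the system is equivalent to a real one, so \Cref{lem:complex_solution} applies and "real solvability $\iff$ complex solvability." Then the existence of a complex solution is, by the Rouché–Capelli theorem, exactly the rank condition \Cref{eq:vec_lem_rank}: a linear system $\mathbf{B}\mathbf{z}=\mathbf{c}$ (here with multiple right-hand sides, so $\mathbf{c}$ is a matrix) is consistent iff $\rank(\mathbf{B}) = \rank([\mathbf{B}\ \ \mathbf{c}])$. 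Writing $\mathbf{B} = \begin{bmatrix}\diag(\mathbf{a}^{(1)})\mathbf{A}_{\mathcal{I}_M^M,L}&\ldots&\diag(\mathbf{a}^{(J)})\mathbf{A}_{\mathcal{I}_M^M,L}\end{bmatrix}$ and appending the $J$ target columns gives precisely the augmented matrix on the right-hand side of \Cref{eq:vec_lem_rank}.

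The main obstacle I anticipate is not any single deep step but the careful handling of the block structure in the first reduction: one must confirm that the per-$k$ scalar identity \Cref{eq:X_K_Y_vector} really does decouple across the $J$ target columns in the way that produces a \emph{block-diagonal} (rather than merely block-structured) $\mathbf{A}_{\mathcal{I}_M^M,L}$ factor, and that stripping the zero-coefficient rows — which is legitimate because $\diag(\mathbf{a}^{(l)})$ annihilates exactly those rows — is done consistently across \emph{all} components simultaneously (the surviving index set $\mathcal{I}_M^M$ is written as the full set here, but if some wavenumber is absent from every component one may restrict it; treating the generic dense case keeps the statement clean). A secondary subtlety is that \Cref{lem:complex_solution} as stated is for a single right-hand side vector, so I would either apply it columnwise to $\mathbf{\tilde K}$ or remark that the vectorized system $(\mathbf{I}_J\otimes\mathbf{B})\mathrm{vec}(\mathbf{\tilde K}) = \mathrm{vec}(\mathbf{c})$ is again a real-equivalent system to which the lemma applies verbatim; either way the rank condition is unchanged since $\rank(\mathbf{I}_J\otimes\mathbf{B}) = J\,\rank(\mathbf{B})$ and likewise for the augmented matrix. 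Once these two points are dispatched, the equivalences follow immediately from standard linear algebra.
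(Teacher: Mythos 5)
Your proposal is correct and follows essentially the same route as the paper's proof: stack \Cref{eq:X_K_Y_vector} over $k$, left-multiply by the nonsingular DFT Vandermonde matrix to obtain the block frequency-domain system \Cref{eq:lem_vector_residual_R}, then apply the Rouch\'e--Capelli theorem for the rank condition and \Cref{lem:complex_solution} to pass between complex and real solvability. The only (immaterial) difference is that the paper justifies the real/complex step by noting the frequency-domain system shares its solution set with the original \emph{real} time-domain system \Cref{eq:X_K_Y_vector} and applies \Cref{lem:complex_solution} there, rather than arguing the complex frequency-domain system is itself real-equivalent via conjugate symmetry.
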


\begin{proof}
See \Cref{apdx:vector_proof}.
\end{proof}

Next, with the introduction of the  Krylov subspace in \Cref{def:krylov_subs} \textcolor{black}{which frequently appears in the early literatures of DMD~\cite{rowley2009spectral,schmid2010dynamic}}, we present \Cref{rem:geo} and \Cref{rem:intp} from  \Cref{eq:lem_vector_residual_R} that interprets and reveals the possibility of using \emph{less} embeddings than the corresponding  sufficient condition for the scalar case in \Cref{thm:sparse_time_delay}.
\begin{definition}[Krylov subspace]
\label{def:krylov_subs}
For $n,r \in \mathbb{N}$, $\mathbf{A} \in \mathbb{C}^{n \times n}$, $\mathbf{b} \in \mathbb{C}^{n \times 1}$, Krylov subspace is defined as
\begin{equation}
    \mathcal{K}_{r}(\mathbf{A},\mathbf{b}) = \spn \{ \mathbf{b}, \mathbf{Ab}, \ldots, \mathbf{A}^{r-1} \mathbf{b}\}.
\end{equation}
\end{definition}

\begin{figure}[htbp]
  \centering
    \includegraphics[width=1\linewidth]{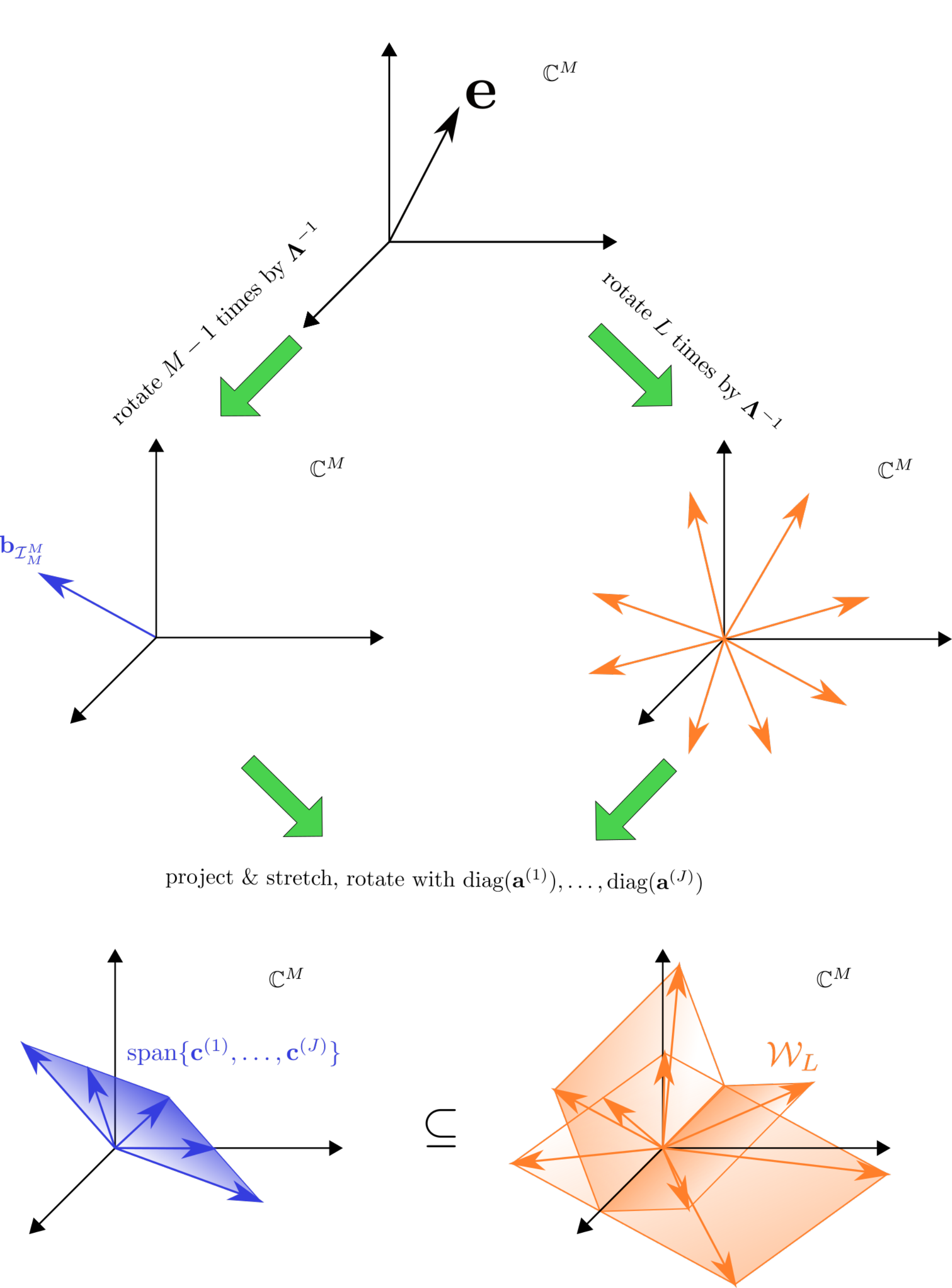}
  \caption{Illustration of the geometrical interpretation of \Cref{lem:vector_residual_R}.}
  \label{fig:geom_meaning}
\end{figure}

\begin{remark}[Geometric interpretation]
\label{rem:geo}
For $j=1,\ldots,J$, define $\mathbf{c}^{(j)} \triangleq \diag(\mathbf{a}^{(j)}) \mathbf{b}_{\mathcal{I}_M^M}$, and $\mathcal{E}_L^{(j)}$ as the column space of  $ \diag(\mathbf{a}^{(j)})\mathbf{A}_{\mathcal{I}_M^M,L}$. The  existence of the solution in \Cref{eq:lem_vector_residual_R} is then equivalent to
\begin{align}
& \forall j \in \{ 1,\ldots,J\}, 
    \nonumber
    \mathbf{c}^{(j)} \in \mathcal{W}_L \triangleq \mathcal{E}_L^{(1)}  \oplus \ldots \oplus \mathcal{E}_L^{(J)} \\
     \label{eq:geometric}
    & \iff \spn{\{\mathbf{c}^{(1)},\ldots, \mathbf{c}^{(J)}\}} \subseteq \mathcal{W}_L,
\end{align}
where $\mathcal{W}_L$ is the column space from all components, and $\oplus$ is the direct sum operation between vector spaces. 
Note that the column space of $\mathbf{A}_{\mathcal{I}_M^M,L}$ can represented as a Krylov subspace  $\mathcal{K}_{L+1}(\mathbf{\Lambda}^{-1},\mathbf{e} )$, where 
\begin{equation}
    \label{eq:e}
    \mathbf{e} \triangleq \begin{bmatrix} 1 & \ldots & 1 \end{bmatrix}^\top,
\end{equation}
\begin{equation}
    \label{eq:lambda}
    \mathbf{\Lambda} \triangleq \diag(\omega^0,\ldots,\omega^{M-1}).
\end{equation}

A geometric interpretation of the above expressions is shown in \Cref{fig:geom_meaning}: for each $j$,  $\mathbf{b}_{\mathcal{I}_M^M} = \mathbf{\Lambda}^{-(M-1)} \mathbf{e}$ and  $\mathbf{e}$  are \emph{projected}, 
\emph{stretched} and \emph{rotated}  using the $j$-th Fourier spectrum diagonal matrix $\diag(\mathbf{a}^{(j)})$ yields $\mathcal{E}_L^{(j)}$ and its total column subspace $\mathcal{W}_L$. If all of the projected and stretched $\mathbf{b}_M$'s are contained in $\mathcal{W}_L$, a real solution exists for \Cref{eq:X_K_Y_vector}.
Notice that in \Cref{eq:geometric}, $\forall i \neq j$, $\mathcal{E}_L^{(i)}$ expands the column space $\mathcal{E}_L^{(j)}$  to include $\mathbf{c}^{(j)}$. Thus, the minimal number of time delays required  in the vector case as in \Cref{eq:X_K_Y_vector} can be smaller than that of the scalar case.
\end{remark}

\begin{remark}[Interplay between Fourier spectra]
\label{rem:intp}

The vector case involves the interaction between the $J$ different Fourier spectra corresponding to each component of the state. This complicates the derivation of an explicit result for the minimal number of time delays as in the scalar case (\Cref{thm:sparse_time_delay}).
We note two important observations that illustrate the impact of the interplay between the $J$ Fourier spectra:
\begin{itemize}
    
    \item To ensure $\mathbf{c}^{(j)}$ lies in $\mathcal{W}_L$, each $\mathcal{E}_L^{(j)}$ should provide distinct vectors to maximize the dimension of $\mathcal{W}_L$. If a  linear dependency is present in  $\{\mathbf{a}^{(j)}\}_{j=1}^J$, \Cref{eq:geometric} no longer holds.
    
    \item Since $\mathbf{c}^{(j)}$ is projected using $\diag(\mathbf{a}^{(j)})$,  if $\mathbf{a}^{(i)\top} \mathbf{a}^{(j)} = 0$,  $\mathcal{E}_L^{(i)}$ will not contribute to increasing the dimension of $\mathcal{W}_L$.
\end{itemize}

\end{remark}

Drawing insight from the representation of the column space of $\mathbf{A}_{\mathcal{I}_M^M,L}$ as the Krylov subspace in \Cref{rem:geo}, we present a connection between the \emph{output controllability} from linear system control theory~\cite{kreindler1964concepts}, and the number of time delays required for linear models in a general sense.

\begin{definition}[Output controllability]
\label{def:output_c}
Consider a linear system with state vector $\mathbf{x}(t) \in \mathbb{C}^{M \times 1}$, $M \in \mathbb{N}$, $t \in \mathbb{R}^{+}$,
\begin{align}
    \label{eq:def_ou_lin}
    \dot{\mathbf{x}} &= \mathbf{A} \mathbf{x} + \mathbf{B} \mathbf{u}, \\
    \label{eq:def_ou_lin_2}
    \mathbf{y}  &= \mathbf{Cx} + \mathbf{Du},
\end{align}
where $\mathbf{A} \in \mathbb{C}^{M \times M}$, $\mathbf{B} \in \mathbb{C}^{M \times N}$, $\mathbf{C} \in \mathbb{C}^{P \times M}$, $\mathbf{D} \in \mathbb{C}^{P \times N}$. $\mathbf{y}(t) \in \mathbb{C}^{P \times 1}$ is the output vector. The above system is said to be output controllable if for any $ \mathbf{y}(0), \mathbf{y}^\prime \in \mathbb{C}^{P \times 1}$, there exists $t_1 \in \mathbb{R}^{+}, t_1 < +\infty$ and $ \mathbf{u}^{\prime} \in \mathbb{C}^{N \times 1}$, such that under such input and initial conditions, the output vector of the linear system can be transferred from $\mathbf{y}(0)$ to $\mathbf{y}^\prime = \mathbf{y}(t_1)$. 
\end{definition}

Recall that the necessary and sufficient condition~\cite{kreindler1964concepts,gruyitch2018observability} for a linear system to be output controllable is given in \Cref{def:output_control}. A natural definition for the output controllability index that is similar to the controllability and observability index is given in \Cref{def:output_c_index}. 
We summarize the conclusion in \Cref{thm:control_vec_connection} that the output controllability index minus one is a tight upper bound for the number of time delays required for the linear model in the general sense. We again emphasize that the particular linear system with input and output in \Cref{thm:control_vec_connection} is solely \emph{induced} by the Fourier spectrum of the nonlinear dynamical system on the attractor.

\begin{definition}[Output controllability test]
\label{def:output_control}
The system in \Cref{eq:def_ou_lin,eq:def_ou_lin_2} is output controllable if and only if 
$
    \mathcal{OC}(\mathbf{A,B,C,D}; M) \triangleq 
    \begin{bmatrix}
    \mathbf{CB} & \mathbf{CAB} & \ldots & \mathbf{CA}^{M-1}\mathbf{B} & \mathbf{D}
    \end{bmatrix} 
$
is full rank. Note that when $\mathbf{D} = \mathbf{0}$, we omit $\mathbf{D}$ in the notation.
\end{definition}
\begin{definition}[Output controllability index]
\label{def:output_c_index}
If the system in \Cref{eq:def_ou_lin,eq:def_ou_lin_2} is output controllable, then the output controllability index is defined as the least integer $\mu$ such that 
    $\mathcal{OC}(\mathbf{A,B,C,D}; \mu) \in \mathbb{C}^{P \times (\mu+1)N}$ 
 is full rank.
\end{definition}

\begin{lemma}
\label{lem:EF_lemma}
For any matrix $\mathbf{A}$ that is a horizontal stack of diagonal matrices, the row elimination matrix $\mathbf{E}$ that removes any row that is a zero vector leads to a full rank matrix with the rank of original matrix. Moreover, $\mathbf{E^\top EA} = \mathbf{A}$.
\end{lemma}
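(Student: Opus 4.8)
The plan is to unpack what it means for $\mathbf{A}$ to be a ``horizontal stack of diagonal matrices,'' namely $\mathbf{A} = \begin{bmatrix} \mathbf{D}_1 & \mathbf{D}_2 & \ldots & \mathbf{D}_s \end{bmatrix}$ where each $\mathbf{D}_i \in \mathbb{C}^{m \times n_i}$ is diagonal (the $(p,q)$ entry of $\mathbf{D}_i$ vanishes unless $p = q$). The key structural observation is that the $p$-th row of $\mathbf{A}$ has all of its nonzero entries — if any — confined to the $p$-th coordinate position within each diagonal block. Hence the $p$-th row of $\mathbf{A}$ is the zero vector if and only if the $p$-th diagonal entry of every block $\mathbf{D}_i$ is zero. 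Let $Z \subseteq \{1,\ldots,m\}$ be the set of such indices, and let $\mathbf{E}$ be the matrix that deletes exactly the rows indexed by $Z$, so $\mathbf{E} \in \{0,1\}^{(m-|Z|) \times m}$ with one $1$ per row, in the surviving positions.

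First I would establish $\mathbf{E}^\top \mathbf{E}\mathbf{A} = \mathbf{A}$. The matrix $\mathbf{E}^\top \mathbf{E}$ is the diagonal $m \times m$ matrix with a $1$ in position $(p,p)$ for $p \notin Z$ and a $0$ for $p \in Z$; in other words it is the orthogonal projection that zeroes out exactly the rows in $Z$. Since those rows of $\mathbf{A}$ are already zero by the definition of $Z$, multiplying by $\mathbf{E}^\top \mathbf{E}$ leaves $\mathbf{A}$ unchanged. This is a one-line verification once the set $Z$ is correctly identified.

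Next I would show $\rank(\mathbf{E}\mathbf{A}) = \rank(\mathbf{A})$ and that $\mathbf{E}\mathbf{A}$ has no zero rows. That $\mathbf{E}\mathbf{A}$ has no zero rows is immediate: its rows are precisely the rows of $\mathbf{A}$ indexed by $\{1,\ldots,m\}\setminus Z$, each of which is nonzero by construction of $Z$. For the rank equality, note $\rank(\mathbf{E}\mathbf{A}) \le \rank(\mathbf{A})$ trivially since $\mathbf{E}\mathbf{A}$ is obtained by deleting rows; conversely, deleting only zero rows cannot lower the rank, because the row space of $\mathbf{A}$ is spanned by its nonzero rows, all of which survive in $\mathbf{E}\mathbf{A}$. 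Equivalently, $\mathbf{A} = \mathbf{E}^\top(\mathbf{E}\mathbf{A})$ from the previous step, so $\rank(\mathbf{A}) \le \rank(\mathbf{E}\mathbf{A})$ as well. (If one wants ``full rank'' in the sense of full \emph{row} rank, note this is not automatic — a horizontal stack of diagonal matrices whose nonzero rows happen to be linearly dependent would be a counterexample — so I would read the claim, consistent with its use in the subsequent vector-case argument, as asserting $\rank(\mathbf{E}\mathbf{A}) = \rank(\mathbf{A})$ together with the absence of zero rows, and phrase the proof accordingly; the diagonal-block structure is what guarantees that the \emph{only} rows that could fail to contribute are the identically zero ones.)

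The main obstacle, such as it is, is purely one of careful bookkeeping: correctly characterizing which rows of the stacked matrix are zero in terms of the diagonal entries of the blocks, and being precise about the relationship between ``full rank'' and ``no zero rows,'' since for a wide matrix these are different notions. Once the index set $Z$ is pinned down, every claim follows from the identity $\mathbf{E}^\top\mathbf{E}\mathbf{A} = \mathbf{A}$ and the elementary fact that discarding zero rows changes neither the row space nor the rank. No deep machinery is needed; the content is entirely in the observation that the block-diagonal structure forces nonzero rows to remain ``genuinely'' present after elimination.
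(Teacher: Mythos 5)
Your treatment of the identity $\mathbf{E}^\top\mathbf{E}\mathbf{A}=\mathbf{A}$ and of the rank preservation $\rank(\mathbf{E}\mathbf{A})=\rank(\mathbf{A})$ matches the paper's and is fine. The genuine gap is in your handling of the ``full rank'' clause: you assert that full row rank of $\mathbf{E}\mathbf{A}$ ``is not automatic --- a horizontal stack of diagonal matrices whose nonzero rows happen to be linearly dependent would be a counterexample,'' and you then retreat to a weakened reading of the lemma. But that scenario cannot occur, and the full-rank claim is exactly the content the lemma is for: if $\mathbf{A}=\begin{bmatrix}\diag(\mathbf{a}^{(1)}) & \ldots & \diag(\mathbf{a}^{(J)})\end{bmatrix}$, then row $p$ of $\mathbf{A}$ is supported only on the column positions $p,\, n+p,\, 2n+p,\ldots$, so distinct rows have pairwise disjoint supports, and any collection of nonzero rows with disjoint supports is automatically linearly independent. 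Hence $\mathbf{E}\mathbf{A}$, whose rows are precisely the nonzero rows of $\mathbf{A}$, has full row rank $P=|\Gamma|$. The paper proves this by selecting, for each surviving row index $\gamma_i$, a column of a block $j_i$ with $\mathbf{a}^{(j_i)}_{\gamma_i}\neq 0$; by the diagonal structure that column of $\mathbf{E}\mathbf{A}$ equals $\mathbf{a}^{(j_i)}_{\gamma_i}e_i$, so the selected columns form an invertible diagonal $P\times P$ submatrix. Either route closes the gap, but your proposal as written omits the argument and actively (and incorrectly) disclaims the conclusion.

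This is not a cosmetic point: the downstream use of the lemma in \Cref{thm:control_vec_connection} is precisely the equality $\rank(\mathbf{C})=\rank(\mathbf{C}^\prime)=P$ with $P$ the number of nonzero rows, i.e.\ full row rank of $\mathbf{E}\mathbf{C}^\prime$, not merely rank preservation. Your closing remark that ``the diagonal-block structure is what guarantees that the only rows that could fail to contribute are the identically zero ones'' is the right intuition, but it sits in tension with your earlier claimed counterexample and is never turned into an argument; you should replace the hedge with the disjoint-support (or column-selection) observation and conclude full row rank explicitly.
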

\begin{proof}
See \Cref{apdx:EF_proof}. 
\end{proof}

\begin{theorem}
\label{thm:control_vec_connection}
Following definitions in \Cref{eq:e,eq:lambda}, consider the following induced linear dynamical system with output controllability index $\mu$:
\begin{align*}
    \dot{\mathbf{Z}} &= \mathbf{A} \mathbf{Z} + \mathbf{Bu}\\
    \mathbf{y} &= \mathbf{CZ} 
\end{align*}
with 
\[
\mathbf{A} = 
\begin{bmatrix}
\mathbf{\Lambda}^{-1} & & \\
& \ddots & \\
& & \mathbf{\Lambda}^{-1}
\end{bmatrix} \in \mathbb{C}^{MJ \times MJ},
\] 
\[
\mathbf{B} = 
\begin{bmatrix}
\mathbf{e} & & \\
& \ddots & \\
& & \mathbf{e}
\end{bmatrix} \in \mathbb{C}^{MJ\times J},
\]
\[
\mathbf{C}^{\prime} = 
\begin{bmatrix}
\diag(\mathbf{a}^{(1)}) & 
\ldots & 
\diag(\mathbf{a}^{(J)})
\end{bmatrix} \in \mathbb{C}^{M \times JM},
\]
\[\mathbf{C} = \mathbf{EC^\prime} \in \mathbb{C}^{P \times JM},\]
where $P$ is the number of non-zero row vectors in $\mathbb{C}^\prime$,  and $\rank \left(\mathbf{C}\right) = \rank \left(\mathbf{C}^{\prime}\right) = P$ as indicated by \Cref{lem:EF_lemma}.
Then,
$\mu-1$ is a tight upper bound on the minimal number of time delays that ensures the existence of solution of \Cref{eq:lem_vector_residual_R}, and  thus a perfect reconstruction of the dynamics.
\end{theorem}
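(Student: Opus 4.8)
The plan is to show that the column space of the block matrix $\mathbf{A}_{\mathcal{I}_M^M,L}$ (replicated $J$ times and scaled by the Fourier spectra) equals the output controllability matrix $\mathcal{OC}(\mathbf{A},\mathbf{B},\mathbf{C};L)$, so that the solvability condition of \Cref{lem:vector_residual_R} — equation \Cref{eq:lem_vector_residual_R}, equivalently the rank equality \Cref{eq:vec_lem_rank} — becomes a statement about when the controllability matrix stops growing. First I would expand $\mathbf{C}\mathbf{A}^k\mathbf{B}$ for the induced system: since $\mathbf{A}$ is block-diagonal with blocks $\mathbf{\Lambda}^{-1}$ and $\mathbf{B}$ is block-diagonal with columns $\mathbf{e}$, we get $\mathbf{A}^k\mathbf{B} = \diag(\mathbf{\Lambda}^{-k}\mathbf{e},\ldots,\mathbf{\Lambda}^{-k}\mathbf{e})$, and hence $\mathbf{C}'\mathbf{A}^k\mathbf{B} = \begin{bmatrix}\diag(\mathbf{a}^{(1)})\mathbf{\Lambda}^{-k}\mathbf{e} & \ldots & \diag(\mathbf{a}^{(J)})\mathbf{\Lambda}^{-k}\mathbf{e}\end{bmatrix}$. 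Stacking $k=0,\ldots,L$ horizontally and using the Krylov-subspace identity from \Cref{rem:geo} (column space of $\mathbf{A}_{\mathcal{I}_M^M,L}$ is $\mathcal{K}_{L+1}(\mathbf{\Lambda}^{-1},\mathbf{e})$, so its columns are $\{\mathbf{\Lambda}^{-k}\mathbf{e}\}_{k=0}^{L}$ up to reordering), one identifies $\spn(\mathcal{OC}(\mathbf{A},\mathbf{B},\mathbf{C}';L))$ with $\mathcal{W}_L = \mathcal{E}_L^{(1)}\oplus\cdots\oplus\mathcal{E}_L^{(J)}$. Applying $\mathbf{E}$ (and \Cref{lem:EF_lemma}, which guarantees $\mathbf{E}^\top\mathbf{E}\mathbf{C}' = \mathbf{C}'$ and $\rank\mathbf{C}=\rank\mathbf{C}'$) removes only redundant zero rows and does not change which vectors lie in the span, so the same holds with $\mathbf{C}$ in place of $\mathbf{C}'$.

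\textbf{From controllability to the delay bound.} Next I would connect $\mu$ to the number of delays. By \Cref{def:output_c_index}, $\mathcal{OC}(\mathbf{A},\mathbf{B},\mathbf{C};\mu)$ is full row rank $P$, while $\mathcal{OC}(\mathbf{A},\mathbf{B},\mathbf{C};\mu-1)$ is not — but by the Cayley–Hamilton-type stabilization argument for controllability matrices, once the column span stops strictly increasing it never increases again, and output controllability ($\mathcal{OC}(\cdot;M)$ full rank) forces the span at step $\mu$ to already be all of $\mathbb{C}^P$. Therefore $\spn(\mathcal{OC}(\mathbf{A},\mathbf{B},\mathbf{C};L)) = \mathbb{C}^P$ for every $L \ge \mu$; in particular $\mathcal{W}_L = \mathbb{C}^P$. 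Now observe that $\diag(\mathbf{a}^{(j)})\mathbf{b}_{\mathcal{I}_M^M} = \mathbf{c}^{(j)} \in \mathbb{C}^P$ trivially, so the containment $\spn\{\mathbf{c}^{(1)},\ldots,\mathbf{c}^{(J)}\}\subseteq\mathcal{W}_L$ of \Cref{eq:geometric} holds automatically once $L = \mu-1$ gives $\mathcal{W}_{\mu-1} \supseteq \spn(\mathcal{OC}(\cdot;\mu-1))$ — here I must be careful: it is $\mathcal{OC}(\cdot;\mu)$, i.e. $L=\mu-1$ delays (since $L+1$ Krylov vectors correspond to powers $0,\ldots,L$, matching $\mathcal{OC}(\cdot;L)=[\mathbf{CB}\,\cdots\,\mathbf{CA}^{L}\mathbf{B}]$ which has $L+1$ blocks), that is full rank. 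So with $L=\mu-1$ the rank equality \Cref{eq:vec_lem_rank} holds because the left matrix already has rank $P$ and appending the $\mathbf{c}^{(j)}$ columns cannot exceed $P$. Hence $L=\mu-1$ delays suffice.

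\textbf{Tightness.} Finally, for tightness I would exhibit (or argue generically) a Fourier spectrum for which $L=\mu-2$ fails: by definition of $\mu$, $\mathcal{OC}(\mathbf{A},\mathbf{B},\mathbf{C};\mu-2)$ is not full row rank, so $\mathcal{W}_{\mu-2}$ is a proper subspace of $\mathbb{C}^P$; one then needs at least one $\mathbf{c}^{(j)}$ to lie outside it. Since the $\mathbf{c}^{(j)} = \diag(\mathbf{a}^{(j)})\mathbf{b}_{\mathcal{I}_M^M}$ are built from the same spectra that define $\mathbf{C}$, I would check that they span a subspace not contained in $\mathcal{W}_{\mu-2}$ — intuitively $\mathbf{b}_{\mathcal{I}_M^M}=\mathbf{\Lambda}^{-(M-1)}\mathbf{e}$ is the ``next'' Krylov vector $\mathbf{A}^{M-1}\mathbf{B}$-type direction, which is exactly the content captured at step $M$ but not necessarily at step $\mu-2$, so it generically escapes. \textbf{The main obstacle} I anticipate is precisely this last point: making the tightness argument rigorous requires showing that $\spn\{\mathbf{c}^{(j)}\}$ is not swallowed by $\mathcal{W}_{\mu-2}$ for at least one admissible spectrum, i.e. that the bound $\mu-1$ is attained and not merely an upper bound — this needs either a clean algebraic identity relating $\mathbf{b}_{\mathcal{I}_M^M}$ to the deficient Krylov tower, or an explicit worst-case construction, and is more delicate than the ``sufficiency'' direction, which is essentially bookkeeping on the Krylov/controllability correspondence.
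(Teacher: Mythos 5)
Your sufficiency argument follows essentially the same route as the paper: expand $\mathbf{C}\mathbf{A}^k\mathbf{B}$ block-by-block, identify the column space of $\mathcal{OC}(\mathbf{A},\mathbf{B},\mathbf{C};L+1)$ with $\mathcal{W}_L$ via the Krylov correspondence of \Cref{rem:geo}, and use \Cref{lem:EF_lemma} to pass between $\mathbf{C}'$ and $\mathbf{C}=\mathbf{E}\mathbf{C}'$. Two small points of hygiene: your $\mathcal{OC}(\cdot;L)=[\mathbf{CB}\ \cdots\ \mathbf{CA}^{L}\mathbf{B}]$ is off by one from the paper's convention ($\mathcal{OC}(\cdot;\mu)$ stops at $\mathbf{CA}^{\mu-1}\mathbf{B}$), though you land on the correct conclusion $L=\mu-1$; and the statement ``$\mathcal{W}_L=\mathbb{C}^P$'' conflates a subspace of $\mathbb{C}^M$ with $\mathbb{C}^P$ — what you actually need is that $\mathcal{W}_{\mu-1}$ equals the coordinate subspace $\mathbf{E}^\top(\mathbb{C}^P)\subset\mathbb{C}^M$ supported on the nonzero rows, and that each $\mathbf{c}^{(j)}=\diag(\mathbf{a}^{(j)})\mathbf{b}_{\mathcal{I}_M^M}$ is supported there (which it is, since it vanishes wherever $\mathbf{a}^{(j)}$ does). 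The paper handles this exactly via $\mathbf{F}v^{(j)}=\mathbf{E}^\top\mathbf{E}\,\mathbf{c}^{(j)}=\mathbf{c}^{(j)}$. The Cayley--Hamilton stabilization you invoke is not needed for this direction: full rank of $\mathcal{OC}(\cdot;\mu)$ trivially persists for all larger indices because columns are only appended.

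The genuine gap is in your tightness argument, and it stems from aiming at the wrong target. You propose to show that for the \emph{given} system $L=\mu-2$ fails, i.e.\ that some $\mathbf{c}^{(j)}$ escapes $\mathcal{W}_{\mu-2}$. That is a per-system minimality claim, and it is false in general: the $\mathbf{c}^{(j)}$ are very particular vectors (built from the same spectra as $\mathbf{C}$), and \Cref{rem:geo} explicitly anticipates that the vector case can succeed with \emph{fewer} delays than the rank of $\mathcal{OC}$ would suggest, precisely because $\spn\{\mathbf{c}^{(1)},\ldots,\mathbf{c}^{(J)}\}\subseteq\mathcal{W}_L$ can hold while $\mathcal{W}_L$ is still a proper subspace. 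So the ``clean algebraic identity or worst-case construction'' you flag as the main obstacle does not exist in the form you want. The theorem only claims $\mu-1$ is a \emph{tight upper bound}, meaning the bound cannot be uniformly improved, and the paper's proof of this is a one-line reduction: take $J=1$, in which case \Cref{thm:control_vec_connection} collapses to \Cref{thm:sparse_time_delay} with $\mu=P$, and \Cref{thm:sparse_time_delay} already establishes that $P-1=\mu-1$ delays are genuinely necessary there. Replacing your generic-escape argument with this scalar-case witness closes the proof.
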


\begin{proof}
See \Cref{apdx:thm_control_vec_connection}.
\end{proof}

\section{Dynamic mode decomposition of a linear model with 
time-delays}
\label{sec:connect_koopman}

As indicated earlier, the trajectory predicted by linear models with time-delay can be viewed as an observable from an associated high dimensional linear system. To see this, consider a uniformly sampled trajectory data  of length $M$, $\{\mathbf{x}_j\}_{j=0}^{M-1}$. The $L$ time-delay vector for a $J$-dimensional nonlinear system $\mathbf{x} \in \mathbb{R}^{J}$ is defined as, 
\begin{equation}
\mathbf{h}_k = 
\begin{bmatrix}
\mathbf{x}_{k-L} \\ \vdots \\ \mathbf{x}_{k} 
\end{bmatrix}, \quad L \le k \le M-1. 
\end{equation}
If the trajectory data can be well approximated by a linear model with $L$ time-delays of the form in \Cref{eq:linear_ass}, then one has the so-called high order dynamic mode decomposition~\cite{brunton2017chaos,le2017higher} for $L \le k \le M-2$, 
\begin{align}
\mathbf{h}_{k+1}  &\approx \mathbf{A}_L \mathbf{h}_{k},\\
\mathbf{x}_{k+1} = \mathbf{E}_L \mathbf{h}_{k+1} &\approx \mathbf{E}_L \mathbf{A}_L \mathbf{h}_{k} = \mathbf{W}_L \mathbf{x}_{k-L} + \ldots + \mathbf{W}_0 \mathbf{x}_{k} \\
 \mathbf{x}_{k+1} = \mathbf{E}_L \mathbf{h}_{k+1} &\approx \mathbf{E}_L \mathbf{A}_L^{k+1-L} \mathbf{h}_L = \mathbf{Q}_{L} \mathbf{\Lambda}^{k+1-L} \mathbf{P}_L \\ 
 \label{eq:hodmd} \mathbf{x}_{k+1} & \approx  \sum\nolimits_{i=1}^{J(L+1)}\lambda_i^{k+1-L}  \mathbf{q}_i  \mathbf{p}^{\top}_i \mathbf{h}_L
\end{align}
where $\mathbf{E}_L \triangleq \begin{bmatrix} \mathbf{0} & \ldots & \mathbf{0} & \mathbf{I} \end{bmatrix} \in \mathbb{R}^{J \times J(L+1)}$, and
$\mathbf{A}_L \in \mathbb{R}^{J(L+1) \times J(L+1)}$ is known as the block companion matrix, 

\begin{equation}
\mathbf{A}_L = 
\begin{bmatrix}
             & \mathbf{I}        &                  &               &                \\
             &                   &  \mathbf{I}      &               &                \\
             &                   &                  &  \ddots       &                \\
             &                   &                  &               & \mathbf{I}     \\
\mathbf{W}_L & \mathbf{W}_{L-1}  &  \mathbf{W}_{L-2}&  \ldots       &   \mathbf{W}_0        
\end{bmatrix} = \mathbf{P}_L \mathbf{\Lambda}_L \mathbf{P}_L^{-1},
\end{equation}
and 
\begin{equation}
\mathbf{P}_L^{-1} \triangleq \begin{bmatrix}
\mathbf{p}^{\top}_1 \\
\vdots\\
\mathbf{p}^{\top}_{J(L+1)}
\end{bmatrix}, \quad
\mathbf{Q}_L \triangleq  \mathbf{E}_L \mathbf{P}_{L} = \begin{bmatrix}
\mathbf{q}_1 & \ldots &  \mathbf{q}_{J(L+1)}
\end{bmatrix}.
\end{equation}

Note that the above decomposition in \Cref{eq:hodmd} reduces to the standard DMD when $L=0$, i.e., 
\begin{equation}
\mathbf{x}_{k+1}=\sum\nolimits_{i=1}^{J}\lambda_i^{k+1-L}  \mathbf{q}_i  \mathbf{p}^{\top}_i \mathbf{x}_0, \quad \forall L \le k \le M-2, 
\end{equation} 
where $\mathbf{q}_i$ and $\{ \lambda_i^{k+1-L} \mathbf{p}^{\top}_i \mathbf{x}_0\}_{k=0}^{M-2}$ are sometimes referred to as the $i$-th \emph{spatial modes} and \emph{temporal modes} respectively.
With more time-delays $L$, the maximal number of linear waves in the model increases with $J(L+1)$. 
As a side note, the above modal decomposition can be interpreted as an \emph{approximation} to the Koopman mode decomposition on the trajectory with $L$ time-delays as observables~\cite{brunton2017chaos,arbabi2017ergodic,arbabi2017study}.



\section{{Verification and practical consideration}}
\label{sec:experiments}


In this section, we start with a simple example and discuss  practical numerical considerations.

\subsection{ 5-mode sine signal}
\label{sec:toy_5modes}

First, an explicit time series consisting of five frequencies with a long period $T=100$ is considered:
\begin{align}
    \nonumber
    x(t) & = 
    0.3 \cos(\frac{2\pi t}{100}) + 
    0.5 \sin(\frac{4\pi t}{100}) + 
    0.9 \cos(\frac{8\pi t}{100}) \\ 
     & \label{eq:5_mode_sine} + 
    1.6 \sin(\frac{16\pi t}{100}) + 
    1.2 \cos(\frac{24\pi t}{100}).
\end{align}
Such a signal may be realized, for instance, by observing the first component of a 10-dimensional linear dynamical system. 
The sampling rate is set at 1 per unit time, which is arbitrary and considered for convenience, and the signal is sampled for two periods from $n=0$ to $n=199$. Thus we have a discretely sampled time series of length 200 as $\{x_n\}_{n=0}^{199}$ with $x_n = x(t)|_{t=n}$. Only the first 20\% of the original signal is used, which is 40\% of a full period with around 20 to 30 data points sampled. The variation in the number of data points is due to the fact that we fix the use of first 20\% of trajectory, and then reconstruct the signal with a different number of time delays. We solve the least squares problem in the time domain with the iterative least squares solver \texttt{scipy.linalg.lstsq}~\cite{jones2014scipy} with lapack driver as \texttt{gelsd}, and cutoff for small singular values as $10^{-15}$. 
\begin{figure}[htbp]
  \centering
    \includegraphics[width=\linewidth]{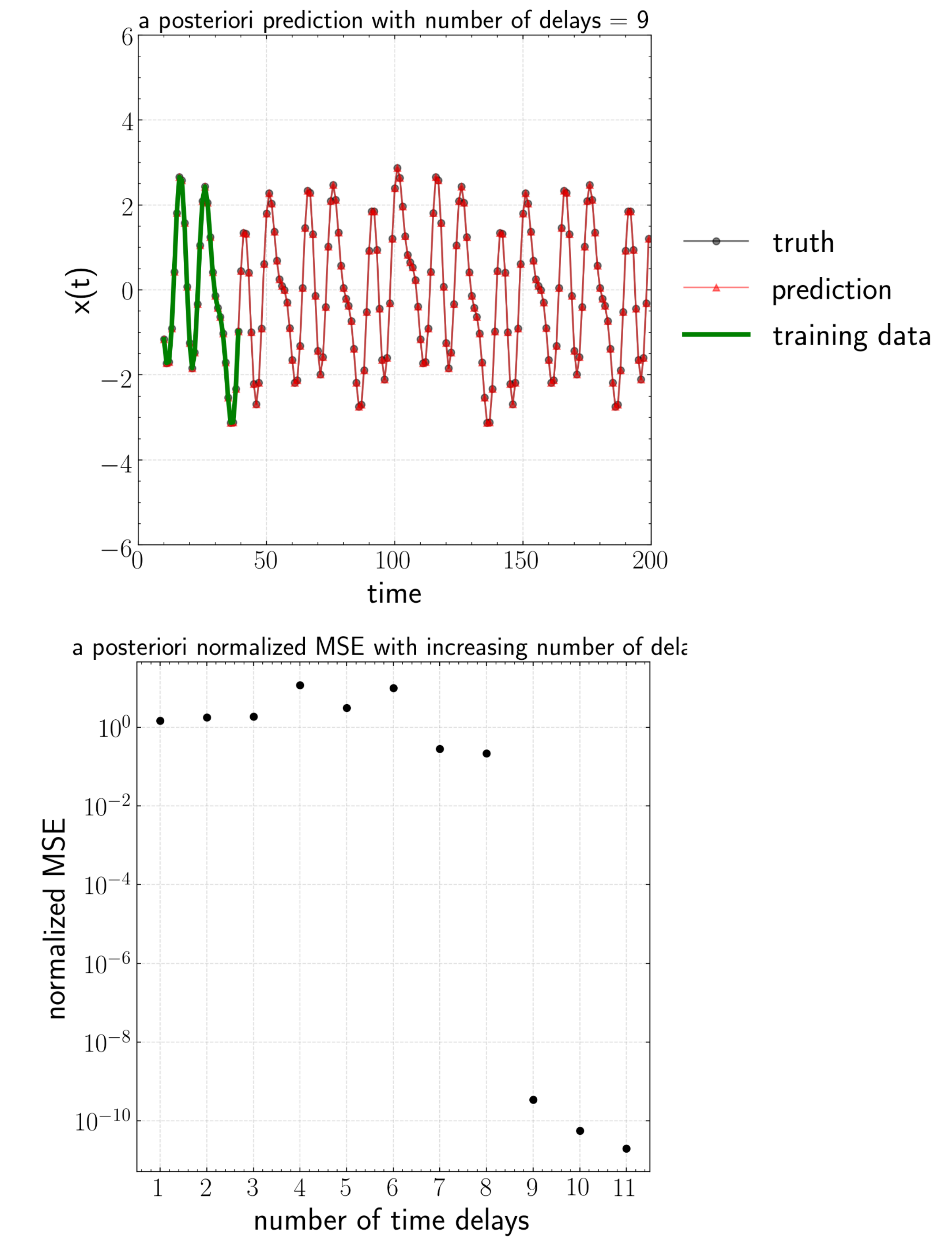}
  \caption{Top: A posteriori prediction vs ground truth, time delayed linear model with number of delays $L=9$. Bottom: A posteriori MSE normalized by standard deviation of $x(t)$ vs number of time delays.}
  \label{fig:result_1_delay_9}
\end{figure}
The analysis in \Cref{thm:sparse_time_delay} implies that one can avoid using the \emph{full period} of data for exact prediction. Numerical results are presented in \Cref{fig:result_1_delay_9} with  number of time delays $L=9$. These results  show that time delayed DMD, unlike non-linear models such as neural networks, avoid the requirement of a \emph{full period} of data  when the dynamics is expressible by a set of sparse harmonics.
From \Cref{thm:sparse_time_delay}, the 5-mode signal has $P = 10$ non-zero Fourier coefficients in the Fourier spectrum, and thus the least number of delays is $L= P-1 = 9$, which agrees well with  \Cref{fig:result_1_delay_9} which shows the a posteriori mean square error normalized by the standard deviation of the data , between prediction and ground truth.  \Cref{fig:result_1_delay_9}  clearly shows that a sharp decrease of a posteriori error when the number of delays $L=9$. 


Now we will consider a different scenario. As explained earlier, linear time delayed models can  avoid the use of a \emph{full period} of data if  there is enough information to determine the solution within the first $P$ states. Thus, if one increases the sampling rate, less data will be required to  recover an accurate solution. However, one still needs to numerically compute the solution of a linear system, while the condition number grows with increasing sampling rates.  As displayed in \Cref{fig:result_3_mse_vs_M}, the condition number increases in both time and spectral domain formulations, with increasing sampling rate.

Using \texttt{scipy.linalg.lstsq}~\cite{jones2014scipy} and a time domain  formulation, we found that there is no visual difference between the truth and a posteriori prediction when the condition number is below $10^{13}$, i.e., $M \le 300$ in the spectral domain, or $M \le 200$ in the time domain. However, as the condition number grows beyond $10^{13}$ (i.e. machine precision noise of even $10^{-16}$ can  contaminate digits around 0.001), a posteriori prediction error can accumulate when $M=400$  (\Cref{fig:result_2_bad_result}). 

\begin{figure}[bthp]
  \centering
    \includegraphics[width=1\linewidth]{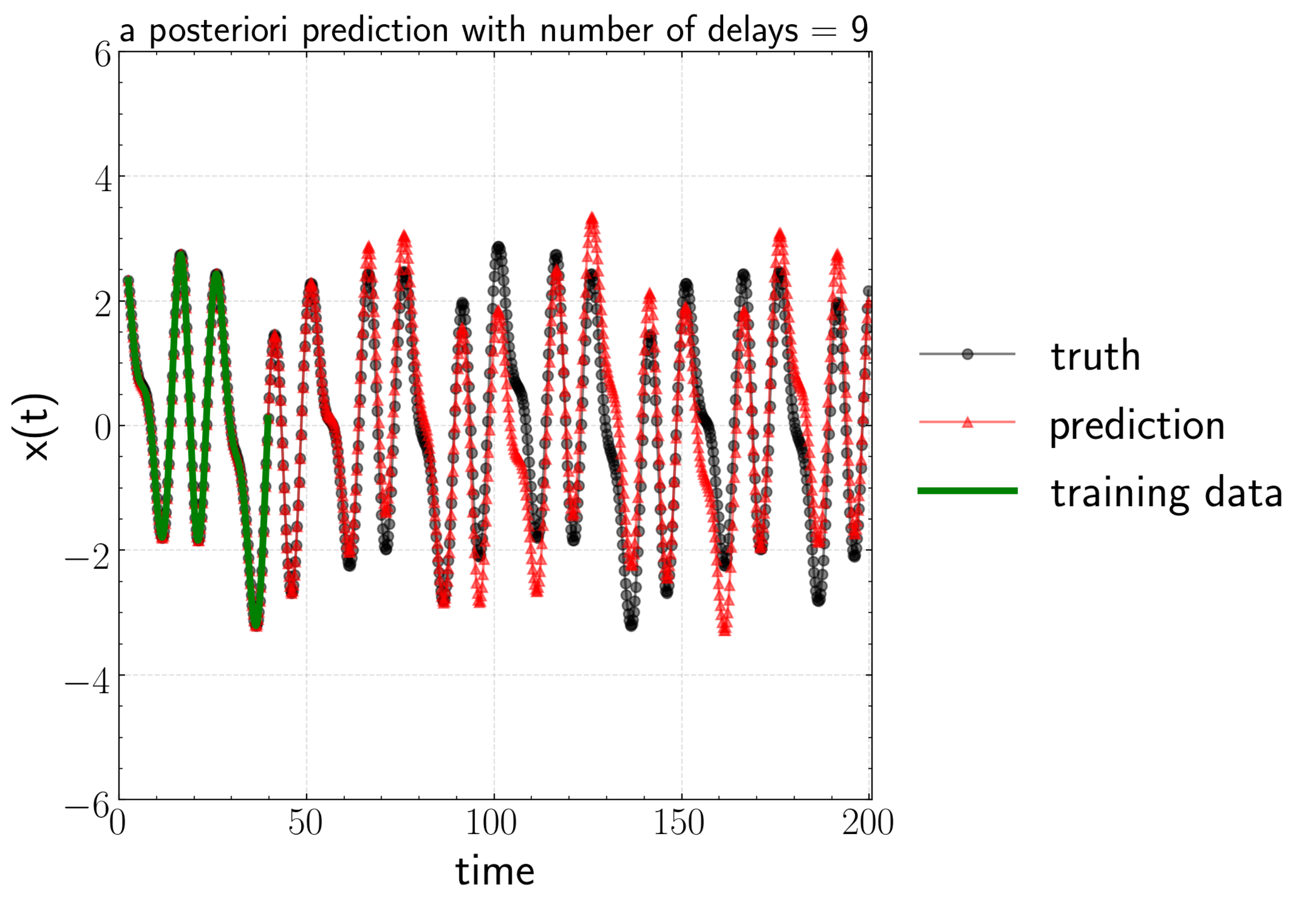}
  \caption{Prediction vs ground truth when sampling rate is excessive, e.g., $M=400$}
  \label{fig:result_2_bad_result}
\end{figure}





\subsection{Numerical considerations}
\label{sec:method}
In practical terms, one can pursue two general formulations to numerically compute the delay transition matrix $\mathbf{K}$ in \Cref{eq:argmin_W}:
\begin{enumerate}
\item \textit{Formulation in time domain:} If all available delay vectors and corresponding future states are stacked, the direct solution of \Cref{eq:argmin_W} is a least square problem in the time domain with the requirement of at least $P$ samples.
\item \textit{Formulation in spectral domain:} In this approach,  the Fourier signals from a full period of data is extracted and  \Cref{eq:linear_system_sparse} is numerically solved. 
\end{enumerate}

\subsubsection{Ill-conditioning due to excessive sampling rate}

Consider signals that consist of a finite number of harmonics with the index set of Fourier coefficients as $\mathcal{I}_M^P$. Since the first half of the indices  $i_0,\ldots,i_{P/2-1}$ is determined by the inherent period of each harmonic, these indices are independent of the number of samples per period $M$, as long as $M$ satisfies the Nyquist condition. It is thus tempting to choose a relatively large sampling rate. However, this may not be favorable from a  numerical standpoint. When $L=P-1$ and the sampling rate is excessive compared to the potentially lower frequency dynamics of the system, each column could become nearly linearly dependent. We will now explore the circumstances under which the corresponding linear system in either the spectral or time domain can become ill-conditioned. It has to also be recognized that the denominator in \Cref{eq:exact_spectral_solution} consists of the difference between different nodes on the unit circle, and can therefore impact numerical accuracy.

The condition number of the Vandermonde matrix with complex nodes \Cref{eq:linear_system_sparse} is also pertinent to the present discussion. It is well known that the condition number of a Vandermonde matrix grows exponentially with the order of matrix $n$ when the nodes are real positive or symmetrically distributed with respect to the origin~\cite{cordova1990vandermonde}. When the nodes are complex, the numerical conditioning of a Vandermonde matrix can be as perfect as that of a  DFT matrix, 
or as poor as that of the quasi-cyclic sequence~\cite{gautschi1990stable}. Specifically,  it has been shown that a large square Vandermonde matrix is  ill-conditioned unless its nodes are nearly uniformly spaced on or about the unit circle~\cite{pan2016bad}. Interestingly, for a  rectangular Vandermonde matrix with $n$ nodes and order $N$, i.e., $\mathbf{V}_N(z_1,\ldots,z_n)$, Kunis and Nagel~\cite{kunis2018condition} provided a lower bound on the 2-norm condition number of the Vandermonde matrix that contains ``nearly-colliding" nodes: 
\begin{equation}
\label{eq:kunis}
    \kappa_2(\mathbf{V}_N(z_1,\ldots,z_n)) \ge \frac{\sqrt{6}}{\pi \tau} \approx \frac{0.77}{\tau}, 
\end{equation}
for all $\tau \le 1$, i.e., ``nearly colliding", 
where $\tau \triangleq N \min_{j \neq l} |t_j - t_l|_{\mathbb{T}}$, $|t_j - t_l|_{\mathbb{T}} \triangleq \min_{r \in \mathbb{Z}} |t_j - t_l + r|$. Applying the above result to \Cref{eq:linear_system_sparse}, when $M$ is large enough so that $\tau \le 1$ is satisfied\footnote{since $\tau = O(1/M)$},  the lower bound of the 2-norm condition number will increase proportionally with the number of samples per period $M$. Thus, the tightly clustered nodes due to excessive sampling  will lead to the ill-conditioning of the linear system in \Cref{eq:linear_system_sparse}.



\subsubsection{Sub-sampling within Nyquist limits}

\Cref{eq:kunis} shows that  the tight clustering of nodes due to excessive sampling can lead to ill-conditioning. A straightforward fix would thus be to filter out unimportant harmonics, and re-sample the signal at a smaller sampling rate that can still capture the highest frequency retained in the  filtering process.
In this way,  the nodes can be more favorably redistributed on the unit circle. Recall that, if the complex nodes of the Vandermonde matrix are  uniformly distributed on a unit circle, then one arrives at a perfect conditioning of the Vandermonde matrix with condition number of one similar to the DFT matrix~\cite{pan2016bad}. Without any loss of generality, we assume the number of samples per period $M$ is even. The wave numbers of sparse Fourier coefficients are denoted by $\mathcal{I}_M^P$. The sorted wave numbers are symmetrical with respect to $M/2$ and recall that the values of the first half of $\mathcal{I}_M^P$, i.e., $i_0, \ldots, i_{\frac{P}{2}-1}$ is independent of $M$, as long as the Nyquist condition is satisfied~\cite{landau1967sampling}. Then,  a continuous signal $x(t)$ is sub-sampled uniformly. Due to symmetry, the smallest number of samples per period $M^*$ that preserves the signal is $2(i_{\frac{P}{2}-1}+1)$.

\subsubsection{Effect of sampling rate, formulation domain, and numerical solver on model accuracy}

To compare the impact of different solution techniques, we choose several off-the-shelf numerical methods to compute $\mathbf{K}$ in either the time domain or spectral domain. These methods include: 

(i) \texttt{mldivide} from \texttt{MATLAB}~\cite{MATLAB_2010}, i.e., backslash operator which effectively uses QR/LU solver in our case;

(ii) \texttt{scipy.linalg.lstsq}~\cite{jones2014scipy}, which by default calls \texttt{gelsd} from  \texttt{LAPACK}~\cite{laug} to solve the minimum 2-norm least squares solution with SVD, and an algorithm based on divide and conquer;

(iii) Bj\"{o}rck \& Pereyra (BP) algorithm~\cite{bjorck1970solution} which is designed to solve the Vandermonde system exactly in an efficient way exploiting the inherent structure. For a $n\times n$ matrix, instead of the standard Gaussian elimination with $O(n^3)$  arithmetic operations and $O(n^2)$ elements for storage, the BP algorithm only requires $n(n+1)(2O_M+3O_A)/2$\footnote{$O_A$ and $O_M$ denote addition/subtraction and multiplication/division.} for arithmetic operations and no further storage than storing the roots and right hand side of the system.

As shown in \Cref{fig:result_3_mse_vs_M}, the condition number increases exponentially with increasing number of samples per period $M$, leading to a significant deterioration of accuracy.
Comparing the  time and spectral domain formulations,  \Cref{fig:result_3_mse_vs_M} shows that the solution for the spectral case is more accurate than the time domain solution when the sampling rate is low. This is not unexpected as one would need to perform FFT on a full period of data to find the appropriate Fourier coefficients in the spectral case. 
When $M>600$, however, the spectral domain solutions obtained by BP and \texttt{mldivide} algorithms blow up, while the time domain solution is more robust in that the error is bounded. Note that the singular value decomposition - in \texttt{lstsq} and in \texttt{mldivide} that removes the components of the solution in the subspace spanned by less significant right singular vectors - is extremely sensitive to noise. Further, from \Cref{eq:physics_to_fourier}, the difference between the formulations in the spectral  and time domains can be attributed to $\mathbf{V}_P(\omega^{k_0},\ldots,\omega^{k_{Q-1}})$ and $\diag(a_{i_0},\ldots,a_{i_{P-1}})$, which could be ill-conditioned. Thus, regularization in the time domain formulation is more effective.  \Cref{fig:result_3_mse_vs_M} also shows that, when the system becomes highly ill-conditioned, i.e., $M>600$, \texttt{lstsq} with thresholding $\epsilon=10^{-15}$ results in a more stable solution than \texttt{mldivide}.

It should be  mentioned that the condition number computed in \Cref{fig:result_3_mse_vs_M} around the inverse of machine precision, i.e., $O(10^{16})$, should be viewed in a qualitative rather than quantitative sense~\cite{drmavc2018data}.

\begin{figure}[bthp]
  \centering
    \includegraphics[width=\linewidth]{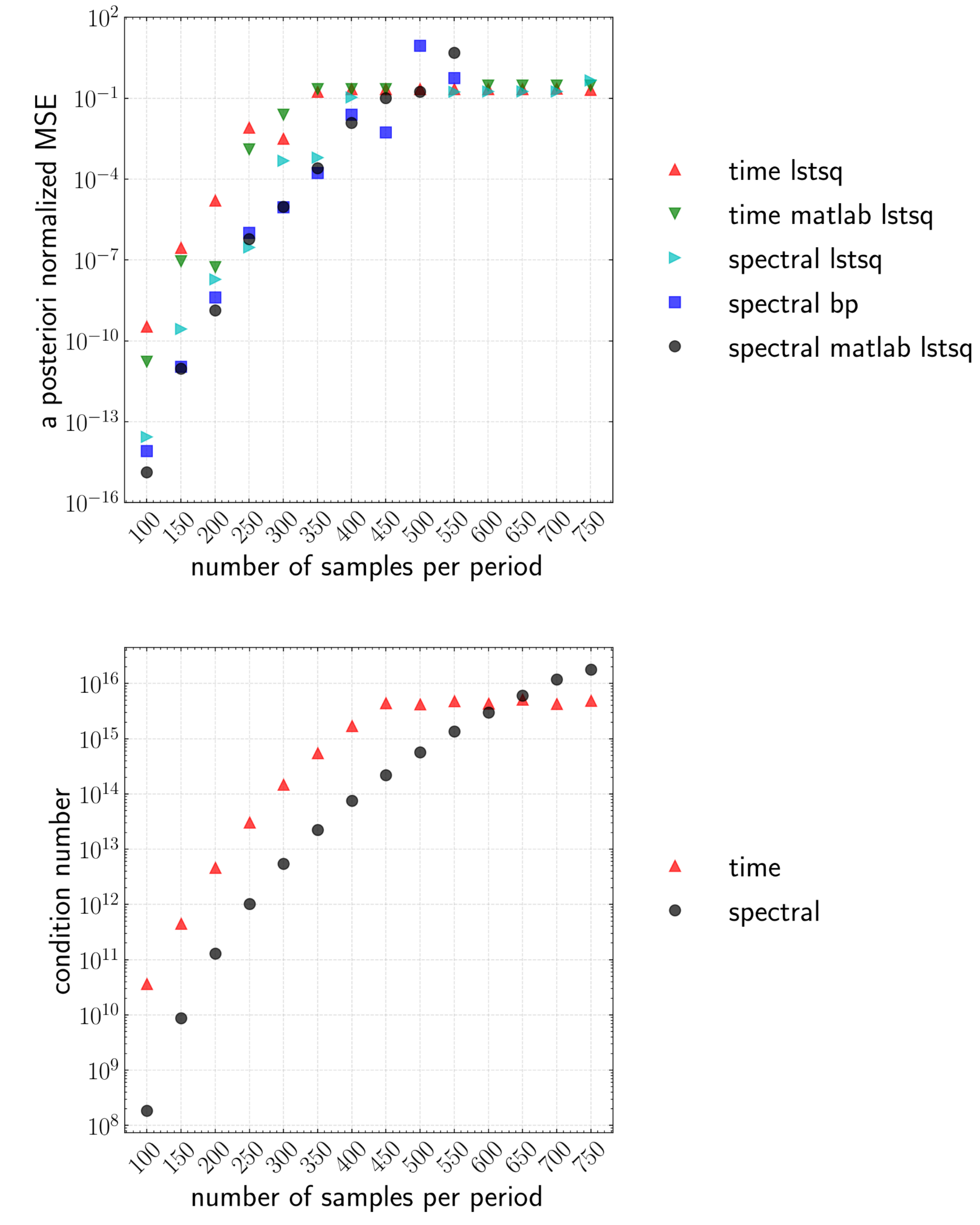}
  \caption{Top: A posteriori MSE normalized by the standard deviation of $x(t)$ with increasing sampling rate and different numerical solvers. Bottom: Numerical condition number with increasing sampling rate}
  \label{fig:result_3_mse_vs_M}
\end{figure}

\subsubsection{Effect of the number of time delays $L$ on condition number}
\label{sec:subsection_cond}
By adding more time delays than the theoretical minimum, the dimension of the  solution space grows, along with the features for least squares fitting. Accordingly, the null space becomes more dominant, and thus one should expect non-unique  solutions with lower residuals. Note that, for simplicity, the following numerical analysis assumes the scalar case, i.e., $J=1$. 

For the complex Vandermonde system in \Cref{eq:linear_system_sparse}, following Baz\'{a}n's work~\cite{bazan2000conditioning}, we discovered very distinct features of the asymptotic behavior of the solution, and the corresponding system in \Cref{eq:linear_system_sparse} when the number  of time delays $L \rightarrow \infty$.

(i) The norm of the minimum 2-norm solution of \Cref{eq:linear_system_sparse} $\lVert \mathbf{\hat{K}}_L \rVert_2 \rightarrow 0$ , as shown in \Cref{prop:norm_fn}.

(ii) An upper bound for the convergence rate of $\lVert \mathbf{\hat{K}}_L \rVert_2^2$ is derived in \Cref{lem:upper_bound}.

(iii) An upper bound on  the 2-norm condition number of \Cref{eq:linear_system_sparse} is shown in \Cref{prop:k2_cond_convergence} to scale with $1+O(1/\sqrt{L})$.  

\begin{proposition}
\label{prop:norm_fn}
$\displaystyle \lim_{L \rightarrow \infty}\lVert \mathbf{\hat{K}}_L \rVert_2 = 0$, where $\mathbf{\hat{K}}_L$ is the minimum 2-norm solution of \Cref{eq:linear_system_sparse}. 
\end{proposition}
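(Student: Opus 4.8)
The plan is to exploit the closed form of the minimum-norm solution of a consistent underdetermined system. For $L\ge P-1$ the matrix $\mathbf{A}_L\triangleq\mathbf{A}_{\mathcal{I}_M^P,L}\in\mathbb{C}^{P\times(L+1)}$ is a rectangular Vandermonde matrix whose $P$ nodes $\omega^{-i_0},\dots,\omega^{-i_{P-1}}$ are distinct points on the unit circle, hence it has full row rank $P$ by \Cref{lem:vdm_rank}. Thus \Cref{eq:linear_system_sparse} is consistent and underdetermined, and its unique minimum 2-norm solution is $\hat{\mathbf{K}}_L=\mathbf{A}_L^{*}\mathbf{G}_L^{-1}\mathbf{b}$, where $\mathbf{b}\triangleq\mathbf{b}_{\mathcal{I}_M^P}$ and $\mathbf{G}_L\triangleq\mathbf{A}_L\mathbf{A}_L^{*}\in\mathbb{C}^{P\times P}$ is the Gram matrix. (By the reflective symmetry of the spectrum, the index set $\mathcal{I}_M^P$ is invariant under $i\mapsto M-i$, which conjugates the rows of $\mathbf{A}_L$ and the entries of $\mathbf{b}$; uniqueness of the minimum-norm solution then forces $\hat{\mathbf{K}}_L=\overline{\hat{\mathbf{K}}_L}$, so $\hat{\mathbf{K}}_L$ is real and coincides with the minimum-norm solution of the real system of \Cref{eq:X_K_Y_stacked}. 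I would state this in one line.) A direct computation gives $\lVert\hat{\mathbf{K}}_L\rVert_2^2=\mathbf{b}^{*}\mathbf{G}_L^{-1}\mathbf{b}\le\lVert\mathbf{G}_L^{-1}\rVert_2\,\lVert\mathbf{b}\rVert_2^2=P\,\lVert\mathbf{G}_L^{-1}\rVert_2$, since each entry of $\mathbf{b}$ has modulus one. It therefore suffices to prove $\lVert\mathbf{G}_L^{-1}\rVert_2\to0$.

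Next I would compute $\mathbf{G}_L$ explicitly. Its $(p,q)$ entry is $\sum_{l=0}^{L}\omega^{-li_p}\overline{\omega^{-li_q}}=\sum_{l=0}^{L}\omega^{l(i_q-i_p)}$. On the diagonal this equals $L+1$; off the diagonal, where $i_p\neq i_q$ and hence $\omega^{i_q-i_p}\neq1$, it is the partial geometric sum $(\omega^{(L+1)(i_q-i_p)}-1)/(\omega^{i_q-i_p}-1)$, whose modulus is at most $2/|\omega^{i_q-i_p}-1|$, a constant independent of $L$. Consequently $\mathbf{G}_L=(L+1)\mathbf{I}_P+\mathbf{E}_L$, where $\mathbf{E}_L$ has zero diagonal and off-diagonal entries bounded uniformly in $L$; bounding $\lVert\mathbf{E}_L\rVert_2$ by, say, $\lVert\mathbf{E}_L\rVert_F$ over its $P(P-1)$ entries (or a Gershgorin row-sum estimate) yields $\lVert\mathbf{E}_L\rVert_2\le c$ for a constant $c=c(\mathcal{I}_M^P)$ depending only on the locations of the nonzero Fourier modes.

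Finally I would close with a standard Neumann-series perturbation argument. Writing $\mathbf{G}_L=(L+1)\bigl(\mathbf{I}_P+\tfrac{1}{L+1}\mathbf{E}_L\bigr)$, for every $L$ large enough that $c/(L+1)\le\tfrac12$ the factor $\mathbf{I}_P+\tfrac{1}{L+1}\mathbf{E}_L$ is invertible with $\lVert(\mathbf{I}_P+\tfrac{1}{L+1}\mathbf{E}_L)^{-1}\rVert_2\le2$, so $\lVert\mathbf{G}_L^{-1}\rVert_2\le 2/(L+1)$. Combined with the bound above this gives $\lVert\hat{\mathbf{K}}_L\rVert_2^2\le 2P/(L+1)\to0$, proving the proposition (and incidentally the rate $\lVert\hat{\mathbf{K}}_L\rVert_2=O(1/\sqrt{L})$, anticipating \Cref{lem:upper_bound}). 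There is no genuine obstacle here; the only point requiring care is that the off-diagonal bound on $\mathbf{E}_L$ is truly uniform in $L$ — which it is, precisely because the nodes $\omega^{-i_p}$ are fixed, distinct points of the unit circle and only the number of their powers being summed grows with $L$.
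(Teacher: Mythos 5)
Your proof is correct, and it takes a genuinely different route from the paper's. The paper proves \Cref{prop:norm_fn} by reducing \Cref{eq:linear_system_sparse}, via a diagonal rescaling and a column permutation, to the canonical form $\mathbf{V}_{L+1}(z_1,\dots,z_P)f=\left(\diag(z_1,\dots,z_P)\right)^{L+1}\mathbf{e}$ and then invoking Baz\'{a}n's theorem that the minimum 2-norm solution of such a Vandermonde system with nodes in the closed unit disk tends to zero as the order grows; the decay itself is imported as a black box. You instead compute everything by hand: the minimum-norm solution is $\mathbf{A}_L^{*}\mathbf{G}_L^{-1}\mathbf{b}$, its squared norm collapses to $\mathbf{b}^{*}\mathbf{G}_L^{-1}\mathbf{b}$, and the Gram matrix $\mathbf{G}_L$ of a Vandermonde matrix whose nodes are distinct \emph{roots of unity} equals $(L+1)\mathbf{I}_P$ plus an off-diagonal perturbation whose entries are partial geometric sums bounded by $2/|\omega^{i_q-i_p}-1|$ uniformly in $L$; a Neumann-series estimate then gives $\lVert\mathbf{G}_L^{-1}\rVert_2\le 2/(L+1)$ for $L$ large enough, hence $\lVert\hat{\mathbf{K}}_L\rVert_2^2\le 2P/(L+1)$. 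All the steps check out, in particular the identity $\lVert\mathbf{A}_L^{*}\mathbf{G}_L^{-1}\mathbf{b}\rVert_2^2=\mathbf{b}^{*}\mathbf{G}_L^{-1}\mathbf{b}$ and the uniformity of the off-diagonal bound, which holds precisely because the nodes are fixed unimodular points and only the length of the geometric sums grows with $L$. What your approach buys is self-containedness and an explicit rate $\lVert\hat{\mathbf{K}}_L\rVert_2=O(1/\sqrt{L})$, of the same order in $L$ as \Cref{lem:upper_bound} but obtained without the duplicated-column construction used there; what Baz\'{a}n's route buys is generality, since his result covers nodes strictly inside the unit disk, where the diagonal of $\mathbf{G}_L$ stays bounded and your perturbation argument would not apply. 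Your parenthetical on the realness of $\hat{\mathbf{K}}_L$ via the reflective symmetry of $\mathcal{I}_M^P$ is also sound (and is a point the paper glosses over), though it is not needed for the limit statement itself.
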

\begin{proof}
See \Cref{apdx:prop_norm_fn}.
\end{proof}

\begin{lemma}
\label{lem:upper_bound}
$\forall L\ge P-1$, denote $\mathbf{\hat{K}}_L$ as the minimum 2-norm solution of \Cref{eq:linear_system_sparse}. The following tight upper bound can be derived
\begin{equation}
    \lVert \mathbf{\hat{K}}_L \rVert_2^2 \le \frac{\lVert \mathbf{\hat{K}}_{P-1} \rVert_2^2}{1 + \left \lfloor{\frac{L - P + 1}{M}}\right \rfloor}.
\end{equation}
\end{lemma}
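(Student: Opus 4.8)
The plan is to exploit the $M$-periodicity of the columns of $\mathbf{A}_{\mathcal{I}_M^P,L}$ in \Cref{eq:linear_system_sparse}. Since $\omega^M=1$, one has $\omega^{-(l+M)i_p}=\omega^{-li_p}$ for every wavenumber $i_p\in\mathcal{I}_M^P$, so the $l$-th column of $\mathbf{A}_{\mathcal{I}_M^P,L}$ depends only on $l\bmod M$; in particular two columns coincide exactly when their indices are congruent modulo $M$. Because the first $P$ columns already constitute the invertible Vandermonde block $\mathbf{A}_{\mathcal{I}_M^P,P-1}$ (\Cref{thm:sparse_time_delay}, \Cref{lem:vdm_rank}), the system $\mathbf{A}_{\mathcal{I}_M^P,L}\mathbf{K}=\mathbf{b}_{\mathcal{I}_M^P}$ is consistent for every $L\ge P-1$ and admits a unique minimum $2$-norm solution $\mathbf{\hat{K}}_L$; for $L=P-1$ this is the unique solution $\mathbf{\hat{K}}_{P-1}$.

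First I would set $t\triangleq\lfloor(L-P+1)/M\rfloor$ and count repetitions: among the column indices $\{0,1,\dots,L\}$, those congruent to a fixed $r\in\{0,1,\dots,P-1\}$ are $r,r+M,r+2M,\dots$, numbering $\lfloor(L-r)/M\rfloor+1$; since $r\le P-1$ and $L\ge(P-1)+tM$, this is at least $t+1$. Writing $\mathbf{\hat{K}}_{P-1}=(\kappa_0,\dots,\kappa_{P-1})^\top$, I then construct a feasible vector $\mathbf{K}'\in\mathbb{C}^{L+1}$ for the $L$-system: for each residue $r$, pick $t+1$ of the column indices in $\{0,\dots,L\}$ congruent to $r$, put $\kappa_r/(t+1)$ in each of those $t+1$ coordinates, and set every other coordinate to $0$ (the chosen index sets are disjoint over $r$, so $\mathbf{K}'$ is well defined). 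Since all $t+1$ chosen columns equal the $r$-th column of $\mathbf{A}_{\mathcal{I}_M^P,P-1}$, their total contribution is $\kappa_r$ times that column, whence $\mathbf{A}_{\mathcal{I}_M^P,L}\mathbf{K}'=\mathbf{A}_{\mathcal{I}_M^P,P-1}\mathbf{\hat{K}}_{P-1}=\mathbf{b}_{\mathcal{I}_M^P}$, so $\mathbf{K}'$ is feasible, and
\[
\lVert\mathbf{K}'\rVert_2^2=\sum_{r=0}^{P-1}(t+1)\,\frac{|\kappa_r|^2}{(t+1)^2}=\frac{\lVert\mathbf{\hat{K}}_{P-1}\rVert_2^2}{t+1}.
\]
The minimum-norm property of $\mathbf{\hat{K}}_L$ then gives $\lVert\mathbf{\hat{K}}_L\rVert_2^2\le\lVert\mathbf{K}'\rVert_2^2=\lVert\mathbf{\hat{K}}_{P-1}\rVert_2^2/(1+\lfloor(L-P+1)/M\rfloor)$, which is the claim.

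For tightness I would observe that equality is attained at $L=P-1$ trivially, and, more substantively, in the dense case $P=M$: there the $M$ distinct columns are the $M$-th roots of unity and hence linearly independent, so every feasible $\mathbf{K}$ must satisfy $\sum_{l\equiv r}K_l=\kappa_r$; combined with $\mathbf{\hat{K}}_{P-1}=(0,\dots,0,1)^\top$ from \Cref{eq:solution_dense} and the fact that the minimum-norm solution assigns equal values to identical columns (replacing unequal entries on two identical columns by their average strictly decreases the norm while preserving $\mathbf{A}\mathbf{K}$), one obtains $\lVert\mathbf{\hat{K}}_L\rVert_2^2=1/(1+t)$, matching the bound. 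For $P<M$ the bound is generally not attained because the minimum-norm solution exploits the extra $M-P$ distinct column types to spread weight further, which is precisely the effect quantified by the conditioning estimate of \Cref{prop:k2_cond_convergence}.

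The only real obstacle is bookkeeping rather than depth: one must check that the residue classes relevant to $\mathbf{\hat{K}}_{P-1}$ are exactly $\{0,\dots,P-1\}$, so that the repetition count $\ge t+1$ is uniform over them, and that restricting $\mathbf{K}'$ to exactly $t+1$ (rather than all available) repeated columns is what produces the clean denominator $1+\lfloor(L-P+1)/M\rfloor$ instead of a residue-dependent sum.
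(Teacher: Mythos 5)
Your proof is correct and follows essentially the same route as the paper: both exploit the fact that columns of $\mathbf{A}_{\mathcal{I}_M^P,L}$ repeat with period $M$, spread each entry of $\mathbf{\hat{K}}_{P-1}$ equally over the duplicated columns to obtain a feasible point of squared norm $\lVert\mathbf{\hat{K}}_{P-1}\rVert_2^2/(1+\lfloor(L-P+1)/M\rfloor)$, and invoke the minimum-norm property of $\mathbf{\hat{K}}_L$. The only cosmetic difference is that the paper passes through the intermediate delay $L_q=qM+P-1$ and derives the equal-weight allocation via Jensen's inequality over a special solution class, whereas you exhibit that allocation directly; your added tightness discussion goes slightly beyond what the paper's proof records.
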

\begin{proof}
See \Cref{apdx:proof_upper_bound}.
\end{proof}

\begin{proposition}
\label{prop:k2_cond_convergence}
Let $P$ be the number of non-zero Fourier coefficients. $\forall L\ge P-1$, denote $ \mathbf{\hat{K}}_{P-1}$ as the unique solution of \Cref{eq:linear_system_sparse}. With the minimal number of time delays, the upper bound on the 2-norm condition number of the system is given by 
\begin{align}
    \nonumber
    \kappa_2(\mathbf{A}_{\mathcal{I}_M^P,L}) &= \kappa_2(\mathbf{V}_{L+1}(\omega^{-i_0},\ldots,\omega^{-i_{P-1}})) \\
    &     \label{eq:our_bounds}
 \le 1 + \frac{d}{2} \left[ 1 + \sqrt{1 + \frac{4}{d}} \right],
\end{align}
where
\begin{align}
    d &\triangleq P \left[\left( 1 + \frac{\lVert \mathbf{\hat{K}}_{P-1} \rVert_2^2}{(P-1)  ({1 + \left \lfloor{\frac{L - P + 1}{M}}\right \rfloor}) \delta^2 } \right)^{\frac{P-1}{2}} - 1\right], \\
    \displaystyle \delta & \triangleq \min_{0 \le j < k \le P-1} |\omega^{-i_j} - \omega^{-i_k}|.
\end{align}
Further, if $L \rightarrow \infty$, then $\kappa_2(\mathbf{A}_{\mathcal{I}_M^P,L}) \rightarrow 1$, i.e.,  perfect conditioning is achieved.
\end{proposition}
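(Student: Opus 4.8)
\emph{Proof idea.} The plan is to combine a structured conditioning estimate for the rectangular Vandermonde matrix $\mathbf{A}_{\mathcal{I}_M^P,L}=\mathbf{V}_{L+1}(\omega^{-i_0},\ldots,\omega^{-i_{P-1}})$, in the spirit of Baz\'{a}n~\cite{bazan2000conditioning}, with the decay estimate of \Cref{lem:upper_bound}, and then to pass to the limit $L\to\infty$.

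First I would record the structural facts. The $P$ nodes $\omega^{-i_0},\ldots,\omega^{-i_{P-1}}$ are distinct and lie on the unit circle, so by \Cref{lem:vdm_rank} the matrix $\mathbf{A}_{\mathcal{I}_M^P,L}$ has full row rank $P$ for every $L\ge P-1$; it therefore has exactly $P$ nonzero singular values and $\kappa_2(\mathbf{A}_{\mathcal{I}_M^P,L})=\sigma_{\max}/\sigma_{\min}$ is well defined. Under the standing assumption $a_1=\overline{a_{M-1}}\neq 0$ we have $P\ge 2$, hence $(P-1)\delta^2>0$. The right-hand side $\mathbf{b}_{\mathcal{I}_M^P}=[\omega^{i_0},\ldots,\omega^{i_{P-1}}]^\top$ has unit-modulus entries, and the minimum $2$-norm solution $\mathbf{\hat{K}}_L=\mathbf{A}_{\mathcal{I}_M^P,L}^{+}\mathbf{b}_{\mathcal{I}_M^P}$ is the coefficient vector of the degree-$L$ polynomial interpolating $z\mapsto z^{-1}$ at the nodes (for $L=P-1$ this is the unique solution from \Cref{thm:sparse_time_delay}).

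Next I would invoke the conditioning estimate, adapting Baz\'{a}n's analysis~\cite{bazan2000conditioning}: for such a rectangular Vandermonde matrix, $\kappa_2$ is controlled by the node separation $\delta=\min_{0\le j<k\le P-1}|\omega^{-i_j}-\omega^{-i_k}|$ and by $\lVert\mathbf{\hat{K}}_L\rVert_2$, in the form
\[
\kappa_2(\mathbf{A}_{\mathcal{I}_M^P,L})\le 1+\frac{d_L}{2}\Bigl[1+\sqrt{1+\tfrac{4}{d_L}}\Bigr],
\qquad
d_L\triangleq P\Bigl[\bigl(1+\tfrac{\lVert\mathbf{\hat{K}}_L\rVert_2^2}{(P-1)\,\delta^2}\bigr)^{\frac{P-1}{2}}-1\Bigr]
\]
(equivalently, $\kappa_2+\kappa_2^{-1}\le 2+d_L$). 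Two elementary monotonicity checks then let me feed in \Cref{lem:upper_bound}: the map $f(d)=1+\tfrac d2+\tfrac12\sqrt{d^2+4d}$ has $f'(d)=\tfrac12+\tfrac{d+2}{2\sqrt{d^2+4d}}>1>0$, and $d_L$ is nondecreasing in $\lVert\mathbf{\hat{K}}_L\rVert_2^2$ since the exponent $\tfrac{P-1}{2}$ is nonnegative. Hence replacing $\lVert\mathbf{\hat{K}}_L\rVert_2^2$ by the upper bound $\lVert\mathbf{\hat{K}}_{P-1}\rVert_2^2/\bigl(1+\lfloor (L-P+1)/M\rfloor\bigr)$ from \Cref{lem:upper_bound} only enlarges the right-hand side, turning $d_L$ into exactly the quantity $d$ of the statement and yielding \Cref{eq:our_bounds}. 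Finally, as $L\to\infty$ we have $\lfloor (L-P+1)/M\rfloor\to\infty$, so the fraction in $d$ tends to $0$, whence $d\to 0$; since $f(d)=1+\sqrt d\,(1+o(1))\to 1$ as $d\to 0^{+}$, the bound forces $\kappa_2(\mathbf{A}_{\mathcal{I}_M^P,L})\to 1$.

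The main obstacle is the conditioning estimate of the third step: showing that $\sigma_{\min}(\mathbf{A}_{\mathcal{I}_M^P,L})$ is bounded below in terms of $\lVert\mathbf{\hat{K}}_L\rVert_2$ and $\delta$ is the only genuinely new inequality, and it is the ``hard'' direction --- $\lVert\mathbf{\hat{K}}_L\rVert_2$ being small must be shown to prevent $\sigma_{\min}$ from collapsing. This uses the special structure of $\mathbf{b}_{\mathcal{I}_M^P}$ (its entries are reciprocals of the nodes) together with a pairwise/telescoping argument over the node differences $\omega^{-i_j}-\omega^{-i_k}$, which is what produces the $(P-1)$-st power and the factor $\delta^{2}$ in $d$. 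Once that estimate is in place, the monotonicity checks, the substitution of \Cref{lem:upper_bound}, and the limit are routine; the degenerate case $P=1$ (which would give $d\equiv 0$ and $\kappa_2=1$ outright) does not arise here.
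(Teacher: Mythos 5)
Your proposal is correct and takes essentially the same route as the paper: Baz\'{a}n's conditioning estimate for rectangular Vandermonde matrices with unit-modulus nodes, combined with the decay bound of \Cref{lem:upper_bound} via the monotonicity of $d \mapsto 1+\tfrac d2\bigl(1+\sqrt{1+4/d}\bigr)$ and of $d$ in $\lVert\mathbf{\hat{K}}_L\rVert_2^2$, followed by the limit $L\to\infty$. The only difference is cosmetic: the paper arrives at your intermediate inequality $\kappa_2+\kappa_2^{-1}\le 2+d_L$ not by lower-bounding $\sigma_{\min}$ directly, but by quoting Baz\'{a}n's Frobenius-norm bound $\kappa_F(\mathbf{V}_N)\le n\bigl(1+\lVert f_N\rVert_2^2/((n-1)\delta^2)\bigr)^{(n-1)/2}$ (which simplifies on the unit circle since $\phi_N(1,1)=1$) and then applying the elementary inequality $\kappa_F\ge n-2+\kappa_2+\kappa_2^{-1}$.
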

\begin{proof}
See \Cref{apdx:k2_cond_convergence}.
\end{proof}

\begin{remark}
\label{rem:bound_upper_discussion}
Note that the bound in \Cref{prop:k2_cond_convergence} does not demand a potentially restrictive condition on the number of time delays, i.e., $L + 1 > 2(P-1)/\delta$ that is required in Baz\'{a}n's work, which utilizes the Gershgorin circle theorem for the upper bound of the 2-norm condition number~\cite{bazan2000conditioning}. More  recently, this constraint has been defined in the  context of the  nodes being ``well-separated"~\cite{kunis2018condition}. Applying such a result to our case, we  have 
\begin{equation}
    \label{eq:bazan_k2_norm_asl}
    \kappa_2(\mathbf{A}_{\mathcal{I}_M^P,L}) \le \sqrt{1 + \frac{2}{\frac{\delta(L+1)}{2P-2} - 1}}
\end{equation}
since we have an estimation for the convergence rate of the minimal 2-norm solution. 
However, although our upper bound in \Cref{prop:k2_cond_convergence} holds\footnote{and is more general than Baz\'{a}n's upper bound \Cref{eq:bazan_k2_norm_asl}} for all $L \ge P-1$, it is too conservative compared to Baz\'{a}n's upper bound when $L\rightarrow \infty$. To see this, denote $k_m \triangleq \min_{i,j \in \mathcal{I}_M^P, i,\neq j } \{ |k| | k = (i-j) \bmod{M} \} $, i.e., the minimal absolute difference between any pair of distinct indices in $\mathcal{I}^P_M$, in the sense of modulo $M$. Assuming that the number of samples per period is large enough so that $M \gg 2\pi k_m$, we have $\delta = \sqrt{2\left[1-\cos(2\pi k_m/M)\right]} \approx 2\pi k_m/M = O(1/M)$.
If we assume that the system with time delay $L$ is far from being perfectly conditioned, we have $\kappa_F(\mathbf{V}_{L+1})  \gg P + 2$, which leads to the following approximation for our upper bound, 
\begin{align}
    \nonumber
    & \kappa_2(\mathbf{V}_N) \le \frac{1}{2} \Big[ \kappa_F(\mathbf{V}_{L+1}) - P + 2 \\ \nonumber
    & + \sqrt{(\kappa_F(\mathbf{V}_{L+1}) - P +2)^2 -4} \Big] \approx  \kappa_F(\mathbf{V}_{L+1}) - P + 2 \\ 
    & \le d + 2.
\end{align}
Hence, for an excessively sampled case, if $L$ is small enough such that $\kappa_F(\mathbf{V}_{L+1}) \ge \kappa_2(\mathbf{V}_{L+1}) \gg P+2$ holds but large enough such that
\begin{equation}
    \label{eq:approx_assumption}
     \frac{\lVert \mathbf{\hat{K}}_{P-1} \rVert_2^2}{(P-1)  ({1 + \left \lfloor{\frac{L - P + 1}{M}}\right \rfloor}) \delta^2}  \ll 1, 
\end{equation}
then the approximated upper bound becomes
\begin{align}
    & 2 + d = 2 + P \left[\left( 1 + \frac{\lVert \mathbf{\hat{K}}_{P-1} \rVert_2^2}{(P-1)  ({1 + \left \lfloor{\frac{L - P + 1}{M}}\right \rfloor}) \delta^2 } \right)^{\frac{P-1}{2}} - 1\right], \nonumber \\
    & \nonumber \approx 2 +  \frac{P \lVert \mathbf{\hat{K}}_{P-1} \rVert_2^2}{2  \delta^2  ({1 + \left \lfloor{\frac{L - P + 1}{M}}\right \rfloor}) } \approx 2 + \frac{P \lVert \mathbf{\hat{K}}_{P-1} \rVert_2^2}{8 \pi^2 k_m^2/M^2  ({1 + \left \lfloor{\frac{L - P + 1}{M}}\right \rfloor}) }\\ &  = 2+ O\left(\frac{M^3}{L}\right).
\end{align}
Meanwhile, when $L$ is very large, and thus $\delta(L+1) > 2(P-1)$ is  satisfied, Baz\'{a}n's bound in \Cref{eq:bazan_k2_norm_asl} scales with $1 + O \left( \sqrt{M} / \sqrt{L} \right)$ for $L/M \gg 1$. Thus, to retain the same upper bound of condition number, one only needs to increase the number of time delays at the \emph{same} same rate as the sampling. 
\end{remark}

 \Cref{fig:result_4_mse_vs_L} shows that the residuals from the least squares problem in both the time and spectral domains  decrease exponentially with the addition of  time delays. Further, the a posteriori MSE shows significant improvement with the addition of time delays. 
\begin{figure}[htbp]
  \centering
    \includegraphics[width=\linewidth]{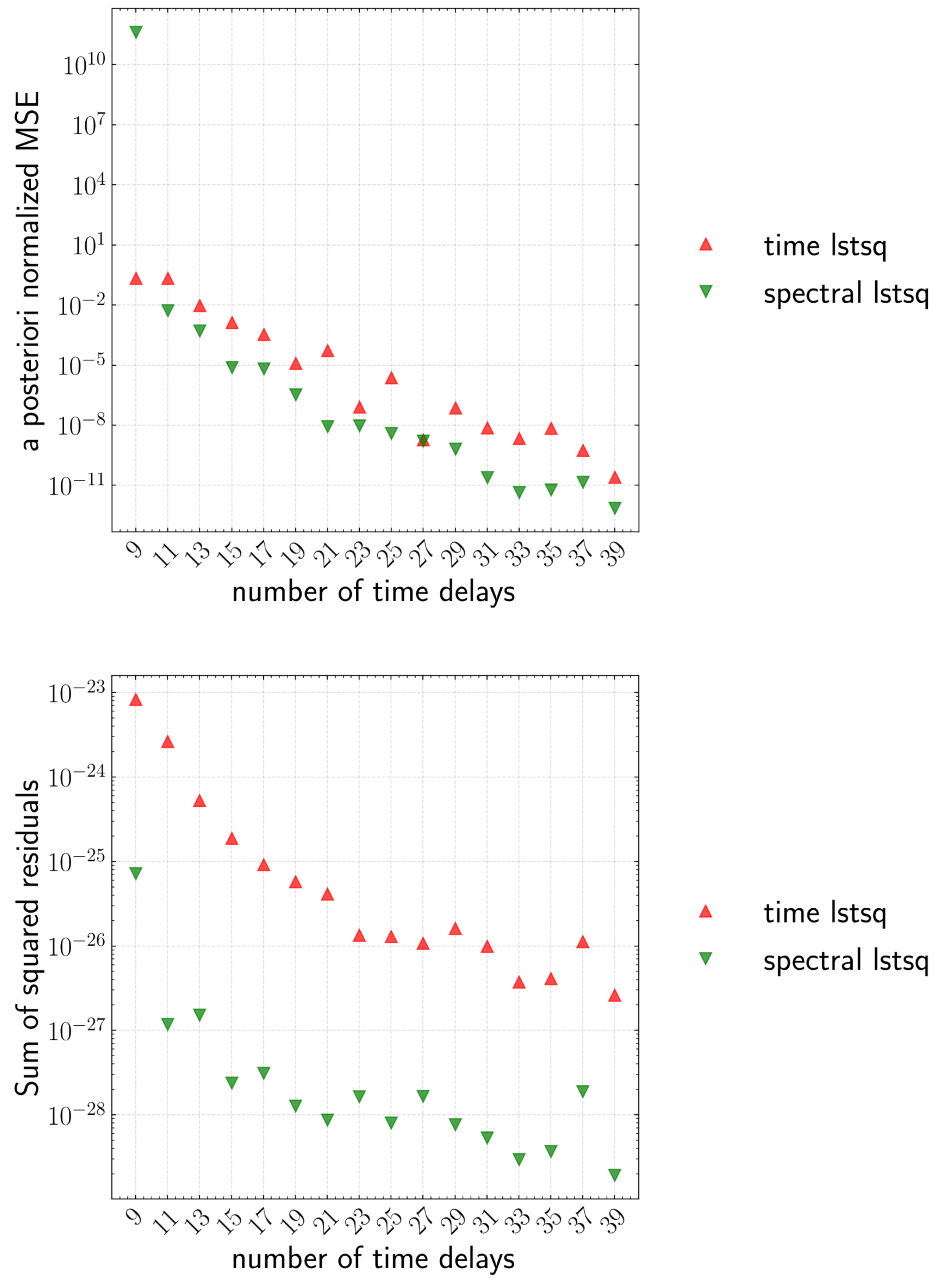}
  \caption{Effect of time delay $L$ on $M=500$ oversampling case. Top: A posteriori MSE normalized by standard deviation of $x(t)$ with increasing time delays. Bottom: Sum of squared residuals with increasing time delays.}
  \label{fig:result_4_mse_vs_L}
\end{figure}

 \Cref{fig:result_4_cond_vs_L} shows the trend of the  2-norm condition number in both the time and spectral domains. The condition number decays exponentially in the spectral case, but increases in the time domain case. This appears to be contradictory since the condition number is typically reflective of the quality of the solution. However, since SVD regularization is implicit in  \texttt{scipy.linalg.lstsq} with \texttt{gelsd} option, computing the 2-norm condition number in the same way as in the numerical solver, i.e., effective condition number~\footnote{i.e., SVD with the same thresholding ($\epsilon = 10^{-15}$) such that any singular value below $\epsilon \cdot \sigma_{max}$ is removed} is a more relevant measure of the quality of the solution of the SVD truncated system. Thus, the reasons for  improved predictive accuracy are due to a) the increasing dimension of the solution space for a potentially under-determined system with more time delays, and b) the  well conditioned system after SVD truncation as shown in \Cref{fig:result_4_cond_vs_L}. The large condition number in the time domain with increasing number of  delays is a result of  the ill-conditioning of $\mathbf{V}_P(\omega^{k_0},\ldots,\omega^{k_{Q-1}})$ and $\diag(a_{i_0},\ldots,a_{i_{P-1}})$ in \Cref{eq:subsample_linear}.  
\begin{figure}[htbp]
  \centering
    \includegraphics[width=\linewidth]{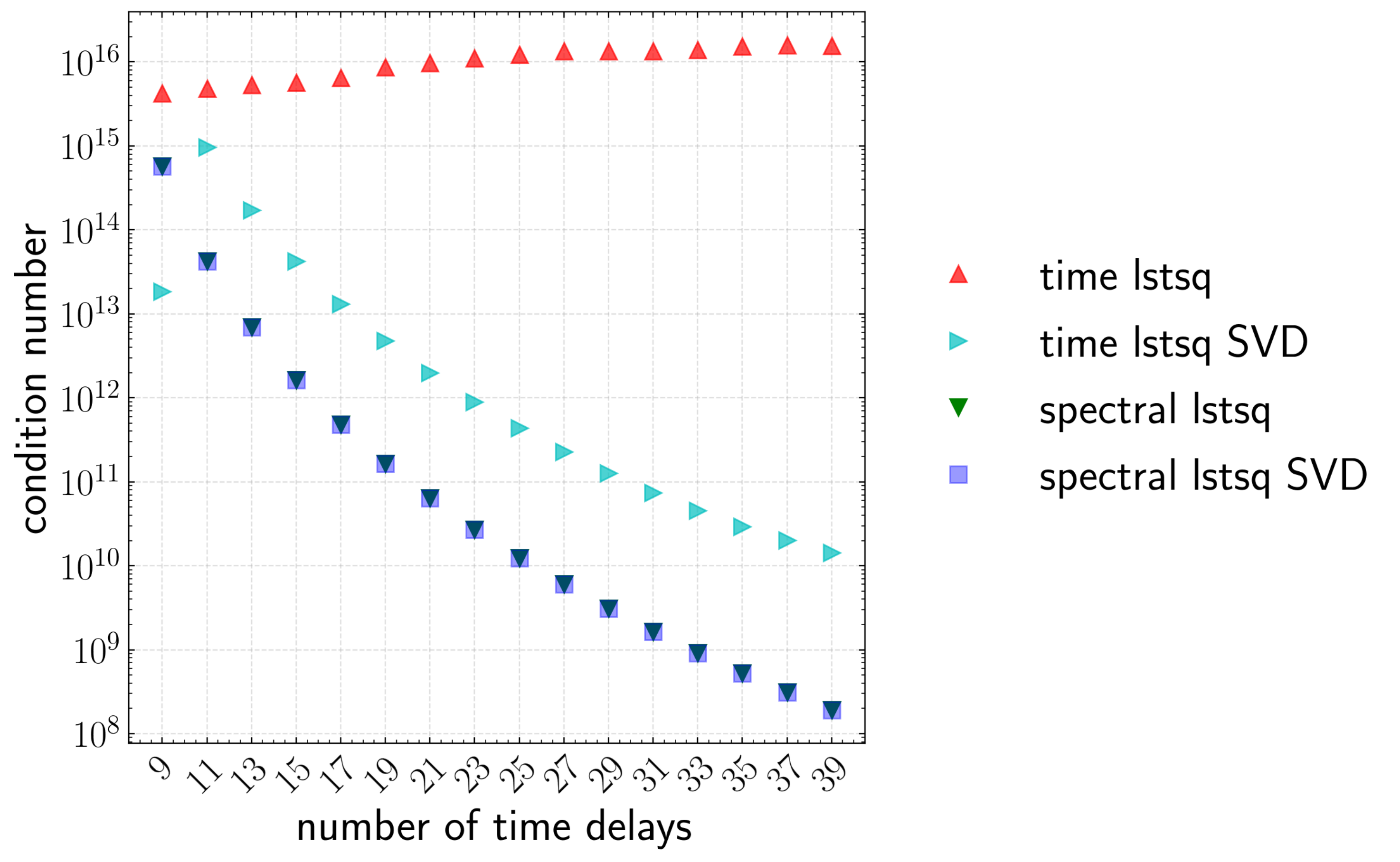}
  \caption{$M=500$ oversampling case: effective condition number decreases with increasing time delay $L$}
  \label{fig:result_4_cond_vs_L}
\end{figure}







\subsubsection{Effect of subsampling on model performance}



As indicated in \Cref{rem:bound_upper_discussion}, reducing the number of samples per period $M$ is shown to  decrease  the upper bound on the condition number. For a given signal, however, there is a restriction on the minimum possible $M$ compared to the number of time delays $L$. In the above case for the 5-mode sine signal,  $i_{\frac{P}{2}-1} = 12$, and thus the minimal sampling per period that one can use to perfectly preserve the original signal in the subsampling is $M=26$. The condition number with $M$ ranging from $26$ to $98$ is shown in \Cref{fig:result_5_cond_vs_M}. This shows the effectiveness of subsampling in reducing the condition number significantly. Correspondingly, the a posteriori normalized MSE is also reduced as shown in \Cref{fig:result_5_cond_vs_M}. 

The previous two subsections  demonstrated the role of numerical conditioning  on model performance. We note that explicit stabilization techniques~\cite{le2017higher,champion2018discovery} require further investigation. 


\begin{figure}[htbp]
  \centering
    \includegraphics[width=\linewidth]{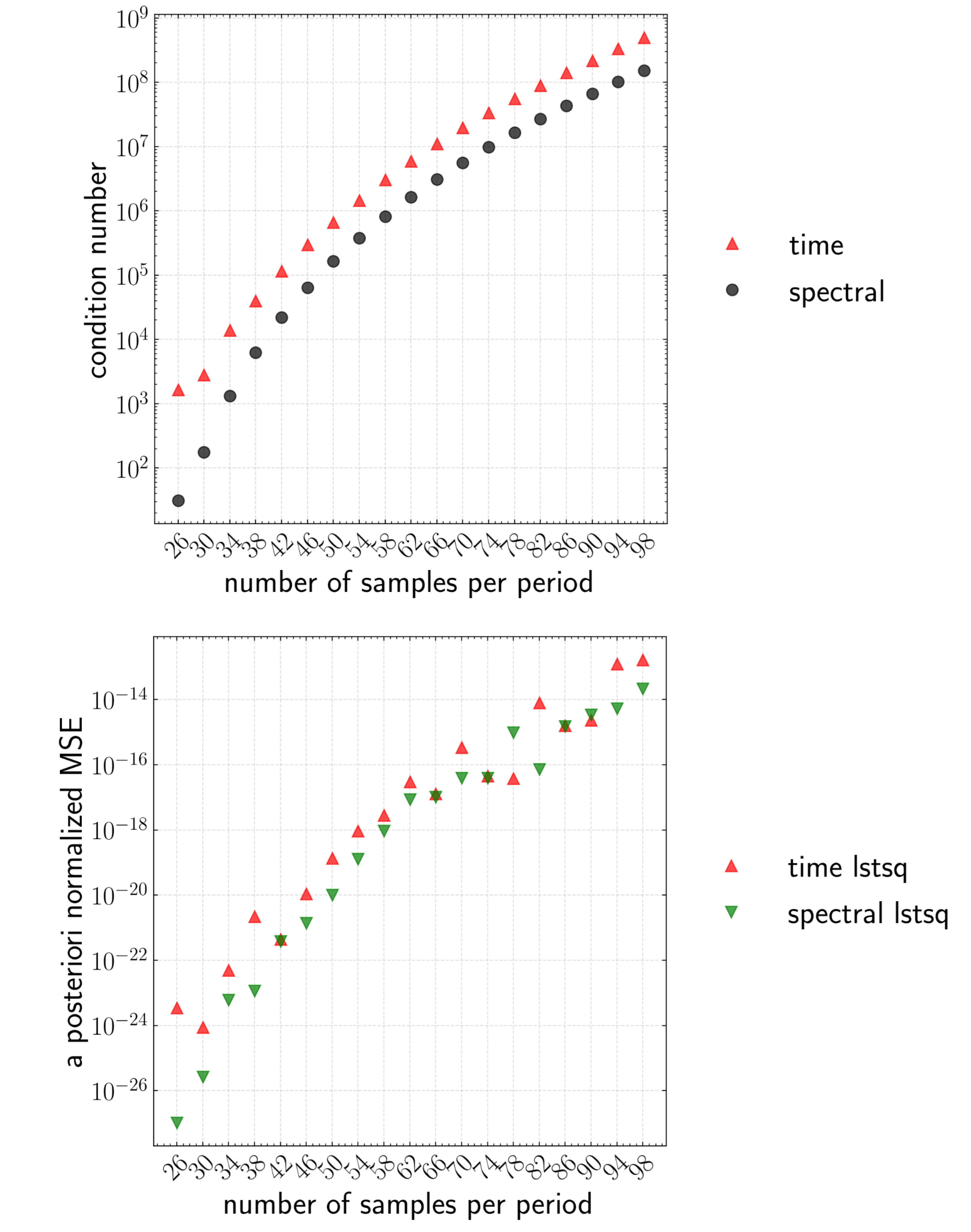}
  \caption{Top: Condition number as a function of sampling rate. Bottom: A posteriori normalized MSE with  sampling rate.}
  \label{fig:result_5_cond_vs_M}
\end{figure}


\subsection{Issues in large-scale chaotic dynamical systems}
\label{sec:opt_delay}

Lnear time delayed models have been investigated for chaotic dynamics on an attractor (for instance, ~\cite{brunton2017chaos}). The main challenges are two fold: 
a) Chaotic systems may require an infinite number of waves to resolve the continuous Koopman spectrum~\cite{mezic2005spectral}, and b)  Practical chaotic systems of interest in science and engineering science are large-scale. For example,  realistic fluid flow simulations, may be very large even after dimension reduction,  especially for advection-dominated problems~\cite{lee2020model}. This would further limit the expressiveness of linear models with time delay. 

To illustrate this, consider dimension reduction using SVD on the trajectory data $\{\mathbf{x}_j\}_{j=0}^{M-1}$. One can extract a reduced $r$-dimensional trajectory, $\{\hat{\mathbf{x}}_j\}_{j=0}^{M-1}$, i.e.,
\begin{equation}
\begin{bmatrix}
\mathbf{x}_0 & \ldots & \mathbf{x}_{M-1} 
\end{bmatrix} \approx \mathbf{U}_r \mathbf{\Sigma}_r \mathbf{V}_r^\top, \quad \hat{\mathbf{x}}_j = \mathbf{U}_r^\top \mathbf{x}_j \in \mathbb{R}^{r}.
\end{equation}
Recalling \Cref{eq:argmin_W,eq:argmin_W_minimal}, we have a similar analytic SVD-DMD solution on the time delay data matrix of the reduced $r$-dimensional system, i.e.,
\begin{equation}
\widehat{\mathbf{A}}_L = \mathbf{Q}_{r^{'}}^{\top} \mathbf{U}_r^{\top} \begin{bmatrix}
\mathbf{h}_{L+1} & \ldots & \mathbf{h}_{M-1} 
\end{bmatrix} \mathbf{Z}_{r^{'}} \mathbf{\Sigma}_{r^{'}}^{-1} \in \mathbb{R}^{r^{'} \times r^{'}},
\end{equation} 
with the following $r^{'}-$SVD regularization purely for numerical robustness
\begin{equation}
\mathbf{U}_r^{\top} \begin{bmatrix}
\mathbf{h}_{L} & \ldots & \mathbf{h}_{M-2} 
\end{bmatrix} \approx \mathbf{Q}_{r^{'}} \mathbf{\Sigma}_{r^{'}} \mathbf{Z}_{r^{'}}^{\top}.
\end{equation}
Note that ${\mathbf{A}}_L = \mathbf{Q}_{r^{'}}  \widehat{\mathbf{A}}_L  \mathbf{Q}_{r^{'}}^{\top} \in \mathbb{R}^{r(L+1)\times r(L+1)}$ with $\rank({\mathbf{A}}_L) = r^{'}$. Following the notations of the mode decomposition in \Cref{sec:connect_koopman}, we have
\begin{equation}
\mathbf{x}_{k+1} \approx \sum\nolimits_{i=1}^{r^{'}} \lambda_i^{k+1-L}  \mathbf{U}_r \mathbf{q}_i  \mathbf{p}^{\top}_i \mathbf{h}_L,
\end{equation}
where $\mathbf{U}_r \mathbf{q}_i$ and $\{ \lambda_i^{k+1-L} \mathbf{p}^{\top}_i \mathbf{h}_L\}_{k=0}^{M-2}$ are the spatial and temporal modes respectively.

Now we can describe the constraints on the maximal number of modes in the linear model $r^{'}$ from the time delay $L$. From the restrictions on matrix rank, we have 
\begin{equation}
    r \le \min \{ J,M \}, \quad r^{'} \le \min \{r(L+1), M-1-L\},
\end{equation}
as illustrated in \Cref{fig:wave_L_cond}. Clearly, we see the maximal number of waves $r^{'}$ stops increasing after the time delay $L$ surpasses the intersection point where $L_{*} = \frac{M}{r+1}-1$, $r^{'}_{*} = \frac{r}{r+1}M$. This relation indicates that keeping more POD modes in the dimension reduction increases the upper limit of the number of waves in the resulting linear models.  The corresponding time delay would decrease with respect to the peak. Interestingly,   for $L > \frac{M}{r+1}-1$, called ``overdelay", might yield an underdetermined linear system as in \Cref{eq:argmin_W_minimal}. For example, we can choose $L_{opt} = \lceil{\frac{M}{r+1}}\rceil$. The solution of that system would, however, result in a least square residual near machine precision, leading to overfitting even in a posteriori sense.
Note that practical problems may require denoising on the trajectory data. 

\begin{figure}[htbp]
  \centering
    \includegraphics[width=\linewidth]{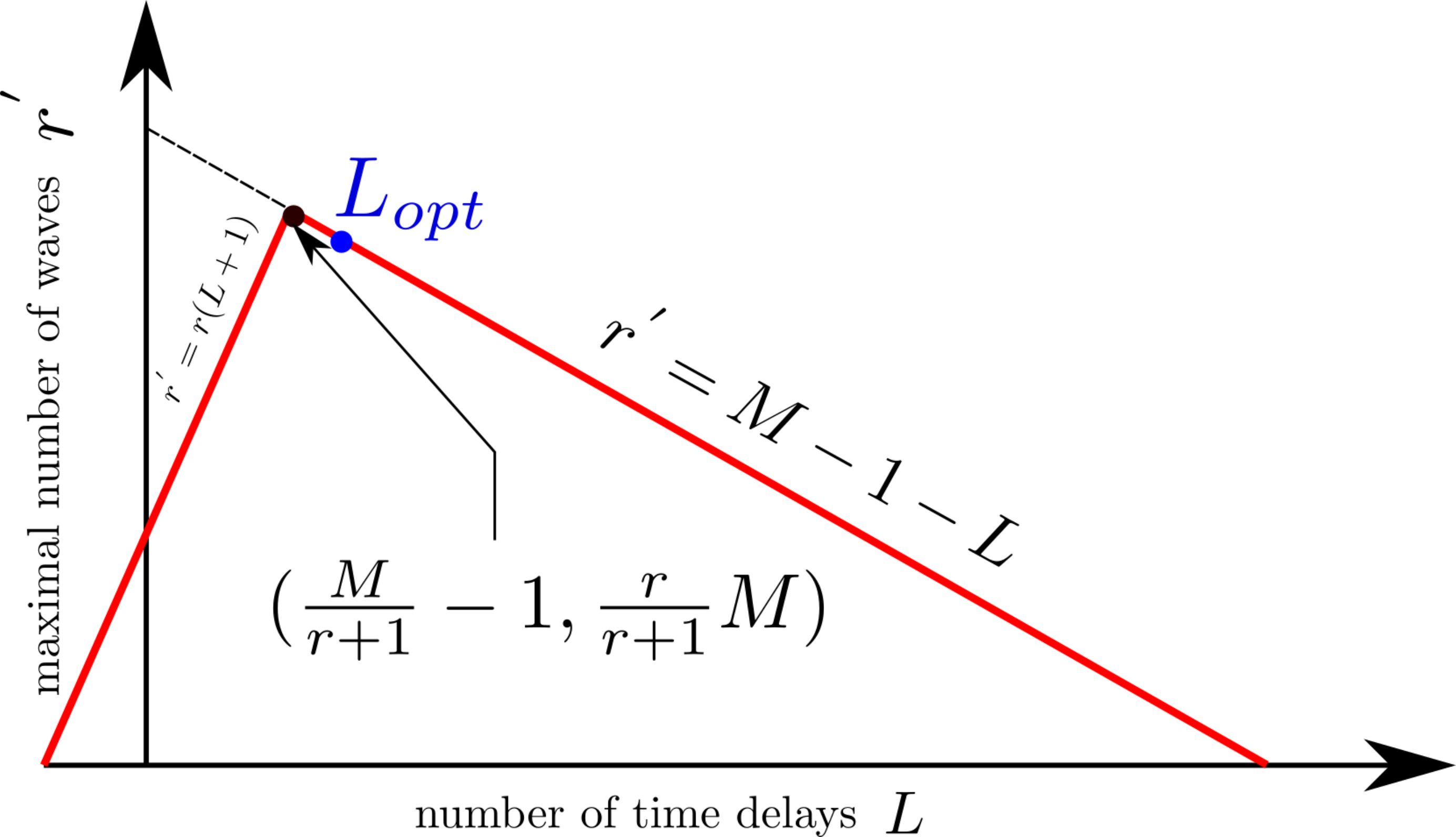}
  \caption{Constraints on maximal number of waves $r^{'}$ in the linear model with time delays.}
  \label{fig:wave_L_cond}
\end{figure}


\section{Applications}
\label{sec:application_nonlinear_system}


\subsection{Van der Pol oscillator}

Now we consider the Van der Pol oscillator (VdP) with forward Euler time discretization:
\begin{equation}{\label{eq:2d_vdp_system_disc}}
    \begin{bmatrix}
    x^{n+1}_1\\ x^{n+1}_2
    \end{bmatrix}
    = 
    \begin{bmatrix}
    x^{n}_1\\ x^{n}_2
    \end{bmatrix}
    + \Delta  t
    \begin{bmatrix}
    x^{n}_2 \\
    \mu (1 - x_1^{n}x_1^{n}) x_2^{n} - x_1^{n}
    \end{bmatrix},
\end{equation}
where $\mu = 2$, $x_1^{0} = 1$, $x_2^{0} = 0$, $\Delta t = 0.01$. After 530 time steps, the system approximately falls on the attractor with an approximate period of 776 steps. Total data is collected after the system falls on the attractor for 4 periods.

As shown in \Cref{fig:result_vdp_spectrum}, Fourier spectrum for each component of VdP system shows that the exhibition of an approximate sparse spectrum with $P=10$ and $P=18$ for $x_1$ and $x_2$ respectively. As indicated from \Cref{thm:sparse_time_delay}, the corresponding time delay and minimal sampling rate is summarized in \Cref{tab:vdp}. 

\begin{figure}[bhtp]
   \centering
   \includegraphics[width=0.7\linewidth]{./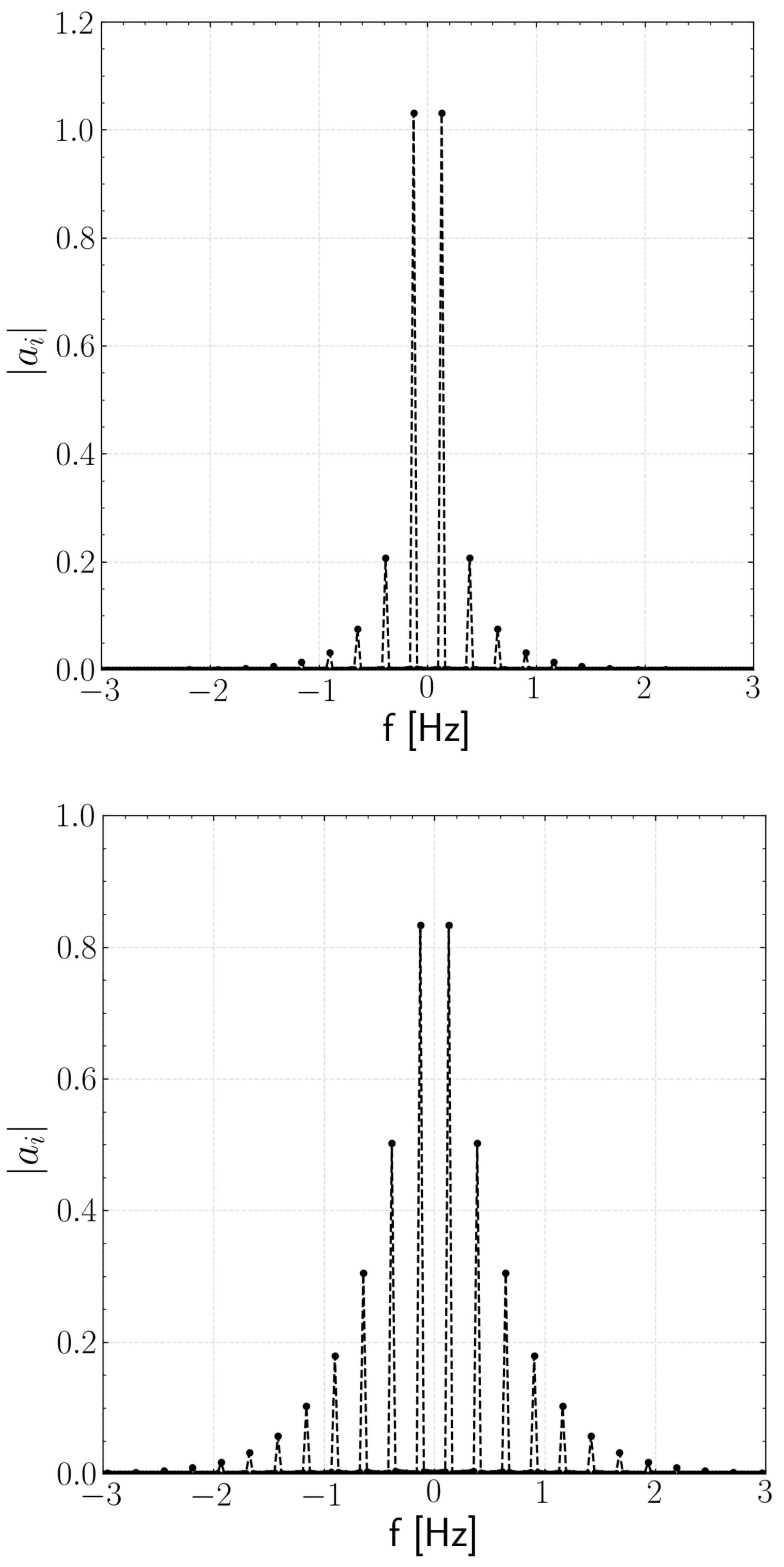}
   \caption{Fourier spectrum for VdP system. Top: $x_1$. Bottom $x_2$.}
   \label{fig:result_vdp_spectrum}
\end{figure}


\begin{table}
\caption{\label{tab:vdp}Summary of the structure of time delay embedding for VdP.}
\begin{ruledtabular}
\begin{tabular}{lcccc}
&  $P$ & $L$ & $i_{P/2-1}$ & $M_{min}$\\
\hline
    $\tilde{x}_1(t)$     & 10 & 9  & 9   & 20 \\
    $\tilde{x}_2(t)$     & 18 & 17 & 18  & 38 \\
    $\tilde{x}_{1,2}(t)$ &    &  8 &     & 38 \\
\end{tabular}
\end{ruledtabular}
\end{table}

\subsubsection{Prediction of the VdP system without a full period of data: scalar case}
\label{sec:vdp_scalar}

From \Cref{tab:vdp}, it is clear that the smallest number of samples per period is significantly smaller than the original number of samples per period, i.e., $M=776$. The analysis in  the previous section also showed that the choice of a smaller number of samples per period is helpful in reducing the condition number. Thus, we choose a moderately subsampled representation without any loss in reconstruction compared to the filtered representation. Individually treating the first and second components, we choose $M=200,100$ with theoretical minimum time delays $L=9,17$, respectively. 

Numerical results displayed in \Cref{fig:result_7_vdp_1} show that, even using training data that covers less than 25\% of the period for the first component, and 50\% of the period for the first component, the linear model with minimal time delays is still able to accurately predict the dynamics over the entire time period of  the limit cycle. Note that a similar predictive performance is expected for the original (unfiltered) VdP system.   

\begin{figure}[htbp]
  \centering
    \includegraphics[width=\linewidth]{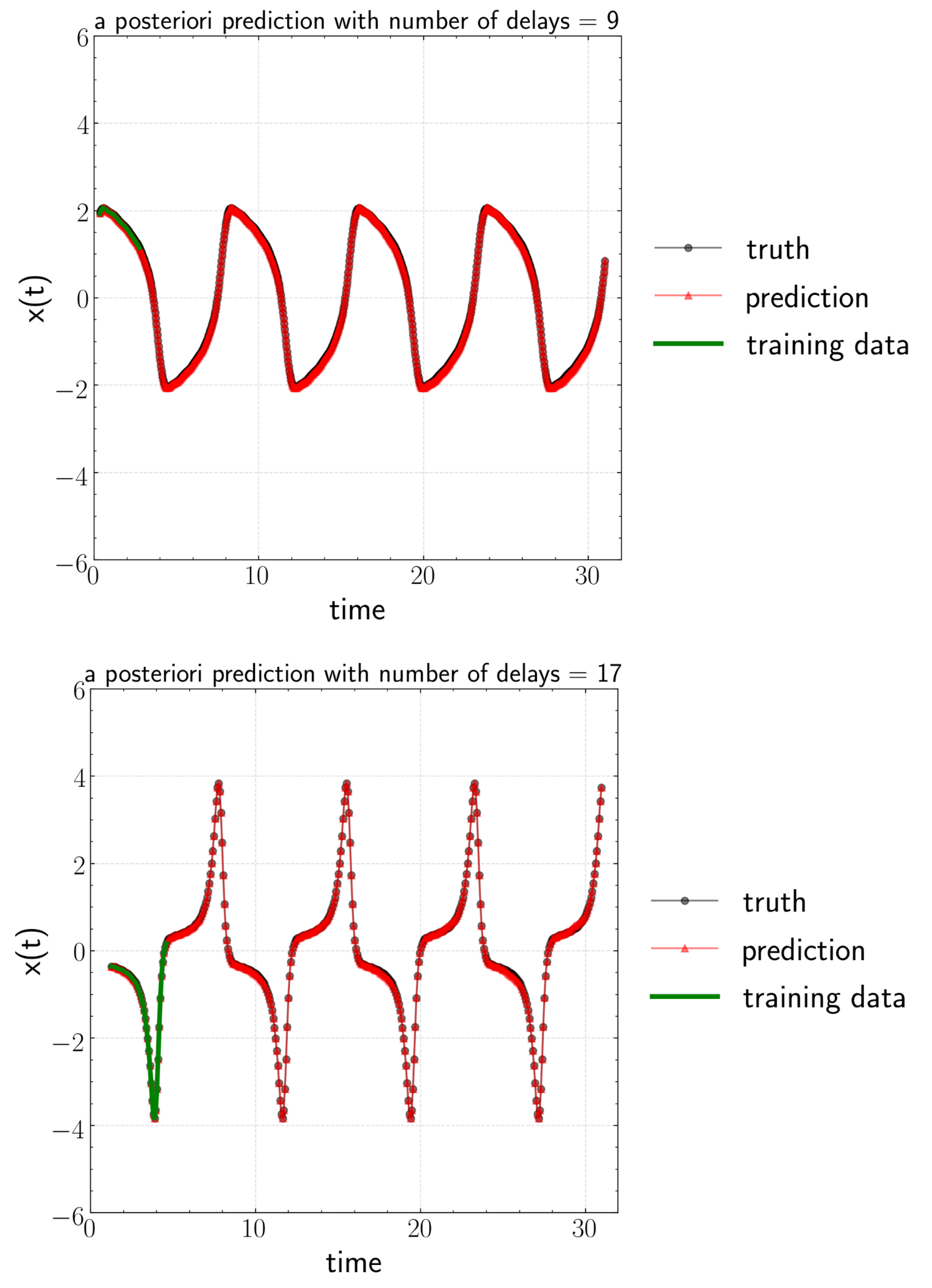}
  \caption{Prediction vs ground truth for each component of VdP. Top: first component. Bottom: second component.}
  \label{fig:result_7_vdp_1}
\end{figure}


\subsubsection{Prediction of VdP system without a full period of data: vector case}

As given in \Cref{tab:vdp}, \Cref{lem:vector_residual_R} predicts that the consideration of both components requires only 8 delays. The effectiveness of the criterion developed in \Cref{lem:vector_residual_R}  is confirmed to a resounding degree in \Cref{fig:result_11_vdp_vector_cond_mse}. The top figure shows the predictive performance of the time delayed linear model for the minimum number of delays and the bottom figure shows the behavior of the a posteriori normalized MSE versus the number of time delays. It should be recognized that in contrast to the scalar case, in which the minimal time delay can be directly inferred from the Fourier spectrum, the vector case requires \emph{iterative} evaluations of the rank test in \Cref{lem:vector_residual_R}.

\begin{figure}[htbp]
  \centering
    \includegraphics[width=1\linewidth]{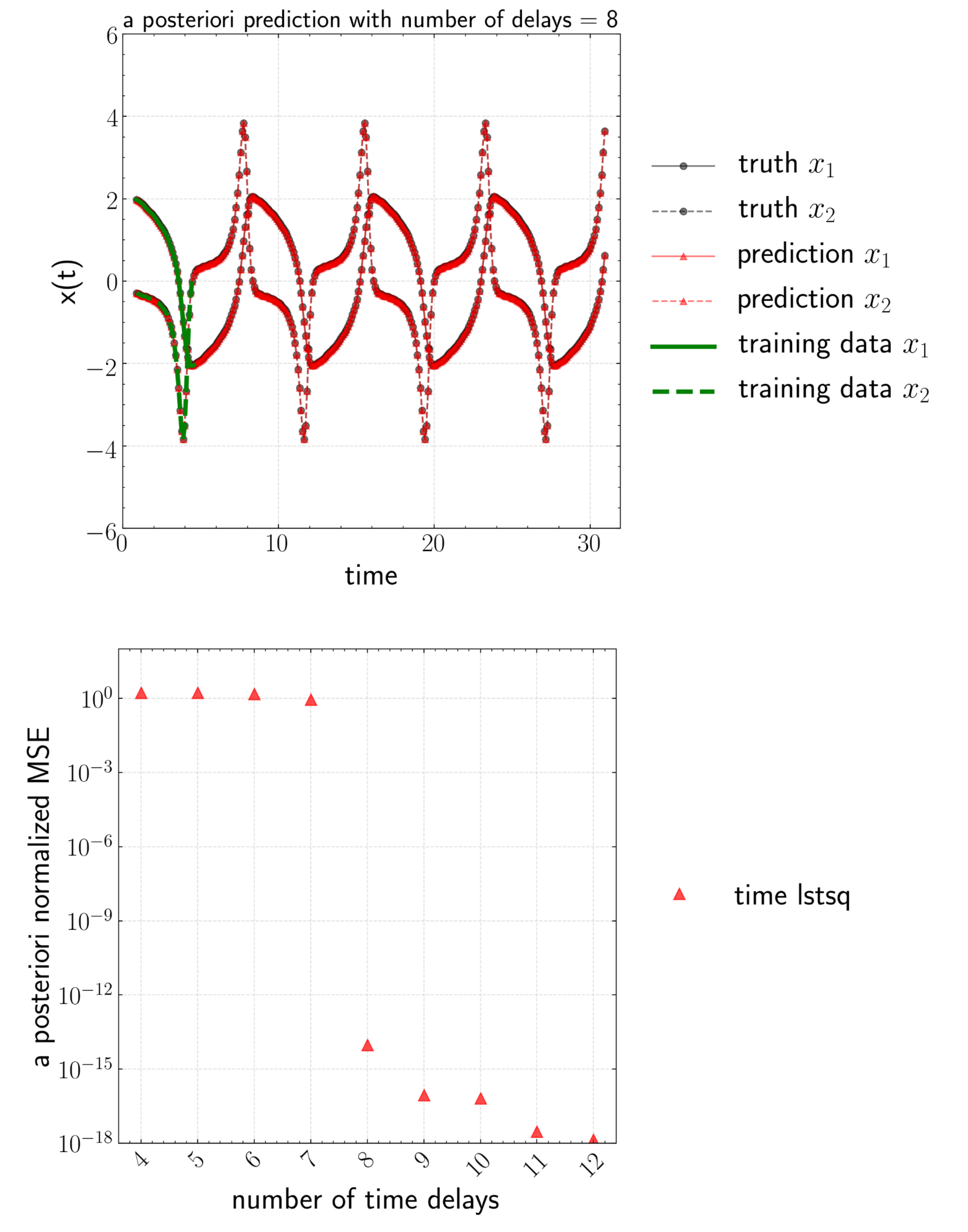}
  \caption{Top: Prediction vs ground truth with $M=80$ for VdP system. Bottom: A posteriori MSE normalized by standard deviation with as a function of the number of time delays for the vector case.}
  \label{fig:result_11_vdp_vector_cond_mse}
\end{figure}

\subsection{{Quasi-periodic signal}}

As indicated in Laudau's route to chaos~\cite{landau1944problem}, quasi-periodic systems play an important role in the transition from a limit cycle to fully chaotic flow.We consider the following  quasi-periodic signal 
\begin{equation}
\label{eq:quasi_periodic}
x(t) = \cos(\sqrt{2}t/2) \sin(\sqrt{3}t/2) \cos(t),
\end{equation}
where $t \in [0,40]$. Consider a sampling interval $\Delta t = 0.1$, we consider the linear model trained on the first 60 snapshots, i.e., $t\in [0,6]$.

\begin{figure}[htbp]
  \centering
    \includegraphics[width=1\linewidth]{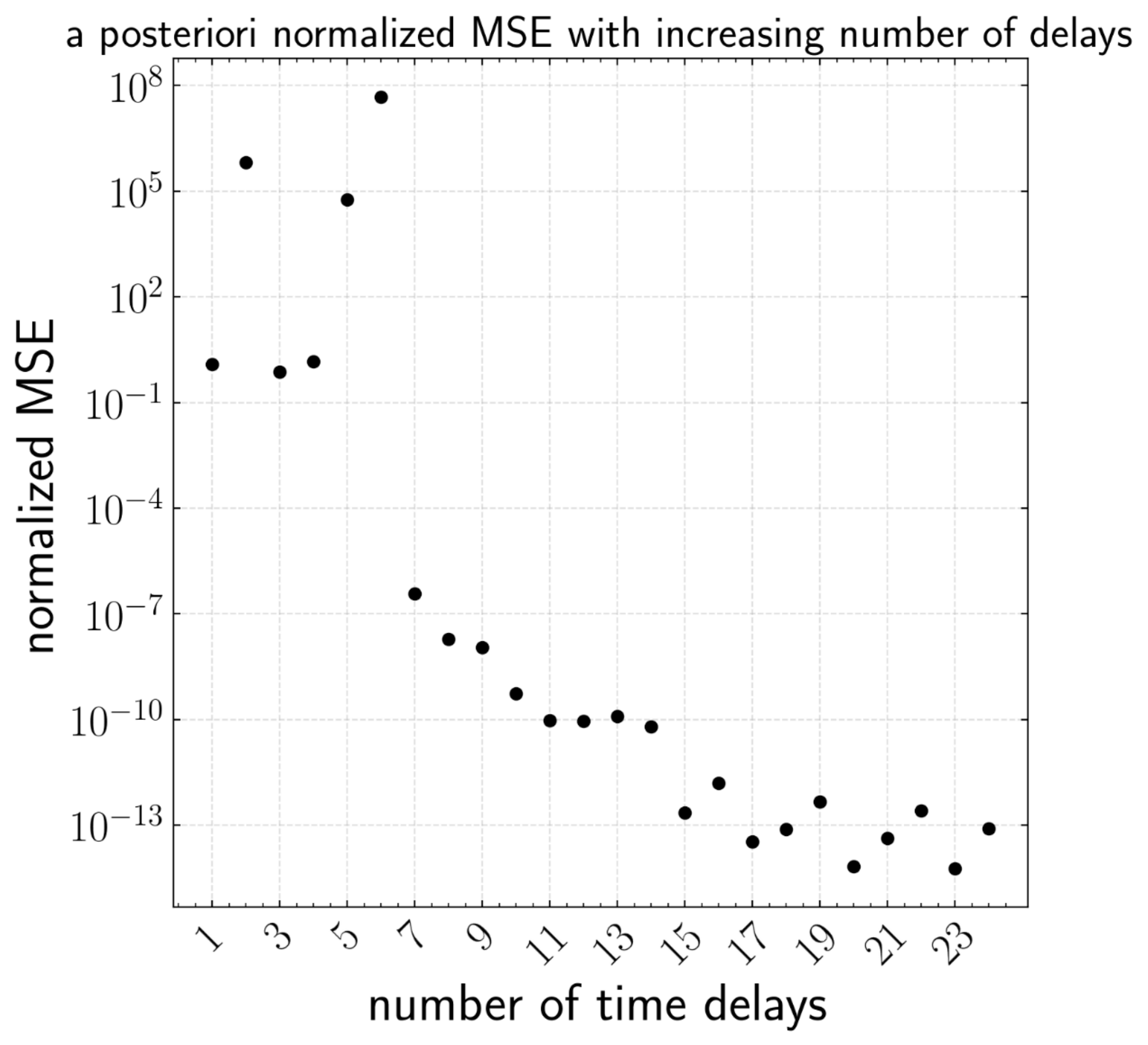}
  \caption{Top: Prediction vs ground truth for the toy quasi-periodic signal. Bottom: A posteriori MSE normalized by standard deviation with as a function of the number of time delays.}
  \label{fig:true_quasi}
\end{figure}

As shown in \Cref{fig:true_quasi}, the linear model with $L=7$ accurately predicts the future state behavior of the quasi-periodic system with only a fraction of data limited in the range $[-0.25, 0.55]$ while the whole data ranges from $[-0.944, 0.902]$. Indeed, the minimal time delay $L=7$ is determined by the number of frequencies in the signal.  The analysis on the minimal number of time delays for scalar time series as in  \Cref{sec:theory} can be extended to quasi-periodic system. Consider the trigonometric identity, we have the following equivalent equation of \Cref{eq:quasi_periodic}, 
\begin{align*}
x(t) = & \frac{1}{4} \bigg( \sin \Big(\frac{(\sqrt{2}  + \sqrt{3} + 2)t}{2}\Big) + 
  \sin \Big(\frac{(\sqrt{2} + \sqrt{3} - 2)t}{2} \Big) \\ & -
   \sin \Big(\frac{(\sqrt{2} - \sqrt{3} + 2)t}{2} \Big) -
    \sin \Big(\frac{(\sqrt{2} - \sqrt{3} - 2)t}{2} \Big)  
   \bigg).
    \end{align*}
Therefore, we require $L=P-1=7$ time delays to fully recover the signal which is confirmed in \Cref{fig:true_quasi}.




%

\subsection{Analysis of noise effect with pseudospectra}
\textcolor{black}{
Note that our analysis and experiments thus far have been based on noise-free assumptions. When additive noise is present in the data, the \emph{minimal} number of time delays as given by the results in \Cref{sec:theory} can be optimistic as we will confirm shortly. Alternatively, one might  de-noise the data as by using for instance, optimal SVD thresholding~\cite{gavish2014optimal} for the delay matrix with i.i.d. Gaussian noise.
To illustrate the effect of noise, the toy 5-mode sine signal in \Cref{sec:vdp_scalar} is considered, but the training horizon is increased to one complete period of data. Consider  additive i.i.d. Gaussian noise with signal-to-noise ratio (with respect to the standard deviation) of 1\%. To assess the influence of noise rigorously, we take an ensemble of 500 data trajectories and train a linear model with ordinary least squares on such data. In other words, for each sample trajectory, we have a slightly perturbed linear model associated with the data. The influence of noise is evaluated in the resulting distribution of eigenvalues (a priori sense) and long-time predictions (a posteriori sense)}. 
\textcolor{black}{
As shown in \Cref{fig:noise_aprior,fig:noise_aposter}, the theoretical optimality of $L=9$ does not hold  as the model becomes overly dissipative. Instead,   $L=20$ is required to have a reasonable prediction. It should be noted that the noise in the training data is too small to be observed in \Cref{fig:noise_aposter}, while the impact on the linear model is significant, as represented from the red shaded region. Moreover, as $L$ increases, it is observed that the ``cloud" of eigenvalues shifts from the left half plane towards the imaginary. Interestingly, the ``clouds" associated with spurious modes are much more scattered than those of the exact modes on the imaginary axis, i.e., the spurious modes are \emph{more sensitive} to the noise in the data. As $L$ becomes increasingly large, e.g., $L=39$, those clouds merge together along the imaginary axis, resulting in higher uncertainty due to the possibility of unstable modes. This is also reflected in the a posteriori predictions in \Cref{fig:noise_aposter}. Interestingly, the ensemble average of a posteriori prediction appears to show better predictions, even though each individual prediction can be divergent. This implies that an appropriate Bayesian reformulation could make the model more robust to noise~\cite{pan2020physics}. 
}
\begin{figure}[htbp]
 \centering
    \includegraphics[width=\linewidth]{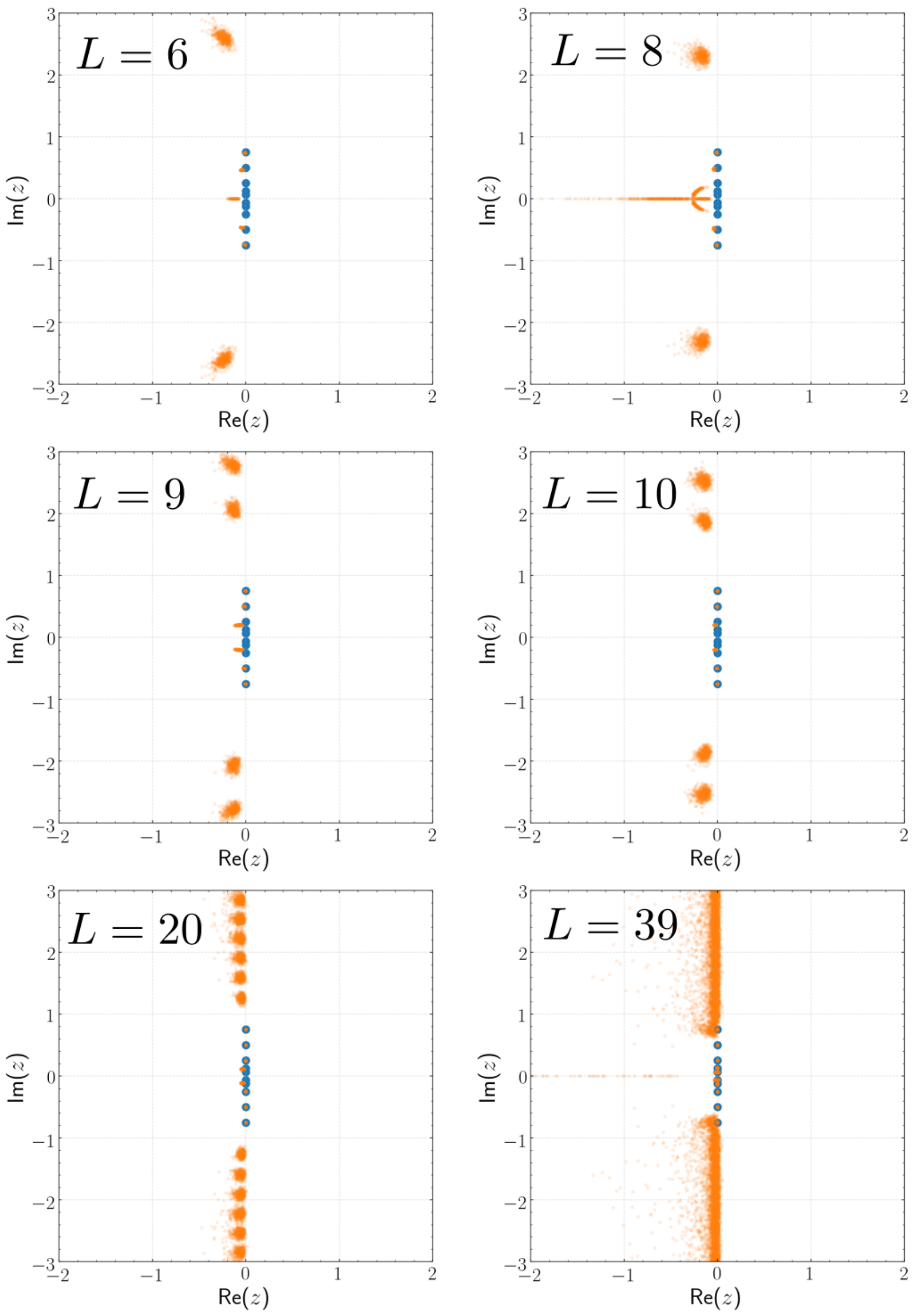}
 \caption{Eigenvalue distribution of linear model from noisy data with signal-to-noise ratio as 0.01 (orange) and noise-free data (blue). Time delay ranges from $L=6$ to $L=39$. }
 \label{fig:noise_aprior}
\end{figure}

\begin{figure}[htbp]
 \centering
    \includegraphics[width=\linewidth]{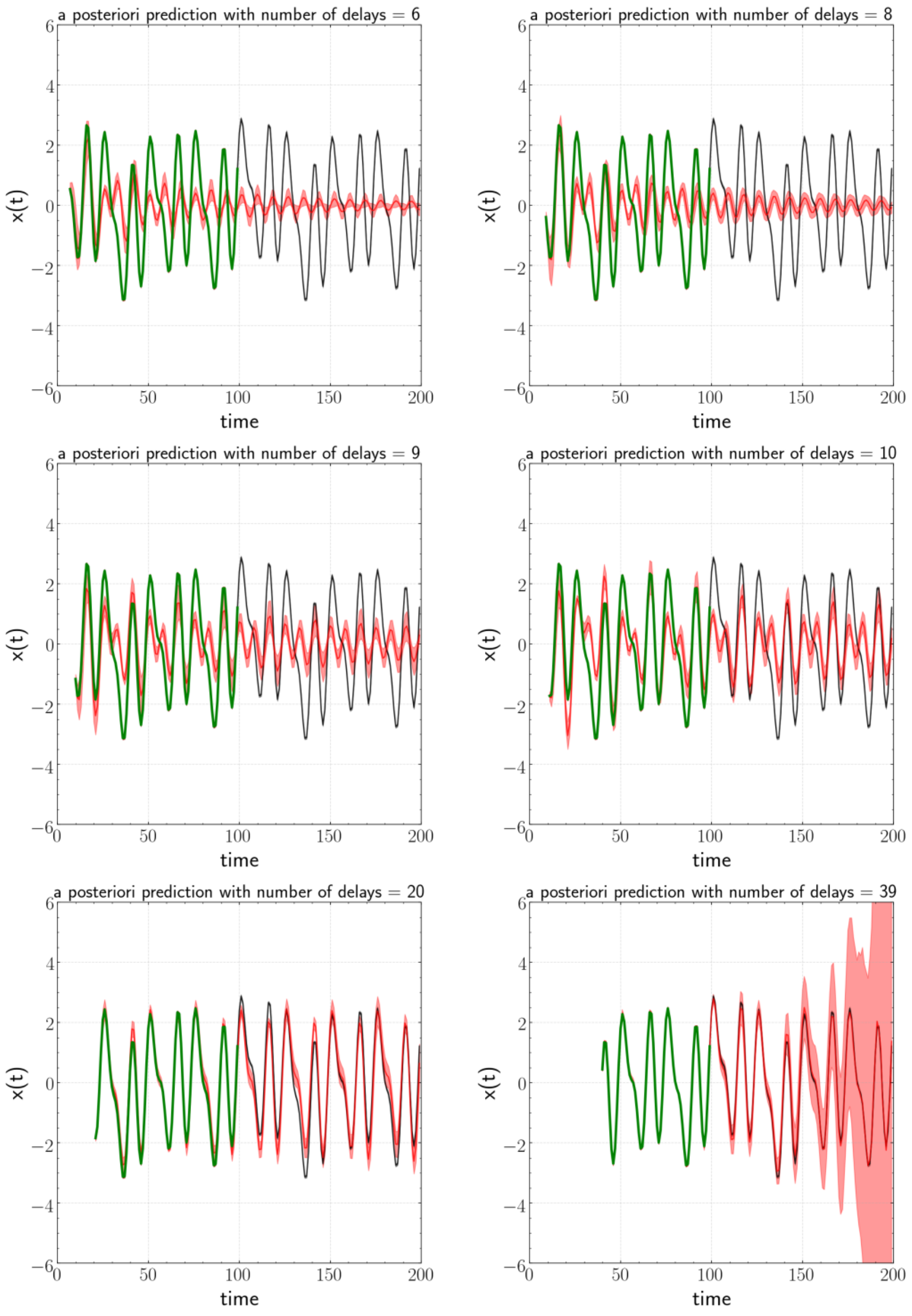}
 \caption{A posteriori prediction from noisy data with signal-to-noise ratio of 0.01. Green: training data. Black: whole data. Red: prediction from linear model. Shaded regions represents the uncertainty range of $\pm2$ standard deviations. Note that all of training, whole and predictions contain shaded region but the noise on training/whole data is too small to be observed. }
 \label{fig:noise_aposter}
\end{figure}

\textcolor{black}{
Next, we will analyze the robustness of the linear time delayed model with respect to noise in a more general sense.
Recall that the previous analysis on condition number in \Cref{sec:subsection_cond} with periodic assumptions  indicates robustness to noise  with increasing time delays. For a more stringent description of the robustness, we introduce the concept of \emph{pseudospectra}~\cite{trefethen1993hydrodynamic}. Here we define the $\epsilon$-pseudospectra of the block companion matrix $\mathbf{A}_{L}$ in \Cref{sec:toy_5modes} as $\Lambda_{\epsilon}$ in \Cref{eq:pseudo_spectra}.
}

\begin{equation}
\label{eq:pseudo_spectra}
    \Lambda_{\epsilon}(\mathbf{A}_{L}) = \{ z \in \mathbb{C}: \sigma_{\textrm{min}} (z\mathbf{I} - \mathbf{A}_{L}) \le \epsilon \},
\end{equation}
\textcolor{black}{
where $\sigma_{\textrm{min}}$ represents the minimal singular value. 
As shown in \Cref{fig:noise_spectra}, it is observed that the robustness of the solution decreases  the increasing $L$ and becomes most sensitive to noise at the noise-free optimal $L=9$, following which  the robustness improves as $L$ increases, which is consistent with previous analysis on condition number.
}

\begin{figure}[htbp]
 \centering
    \includegraphics[width=\linewidth]{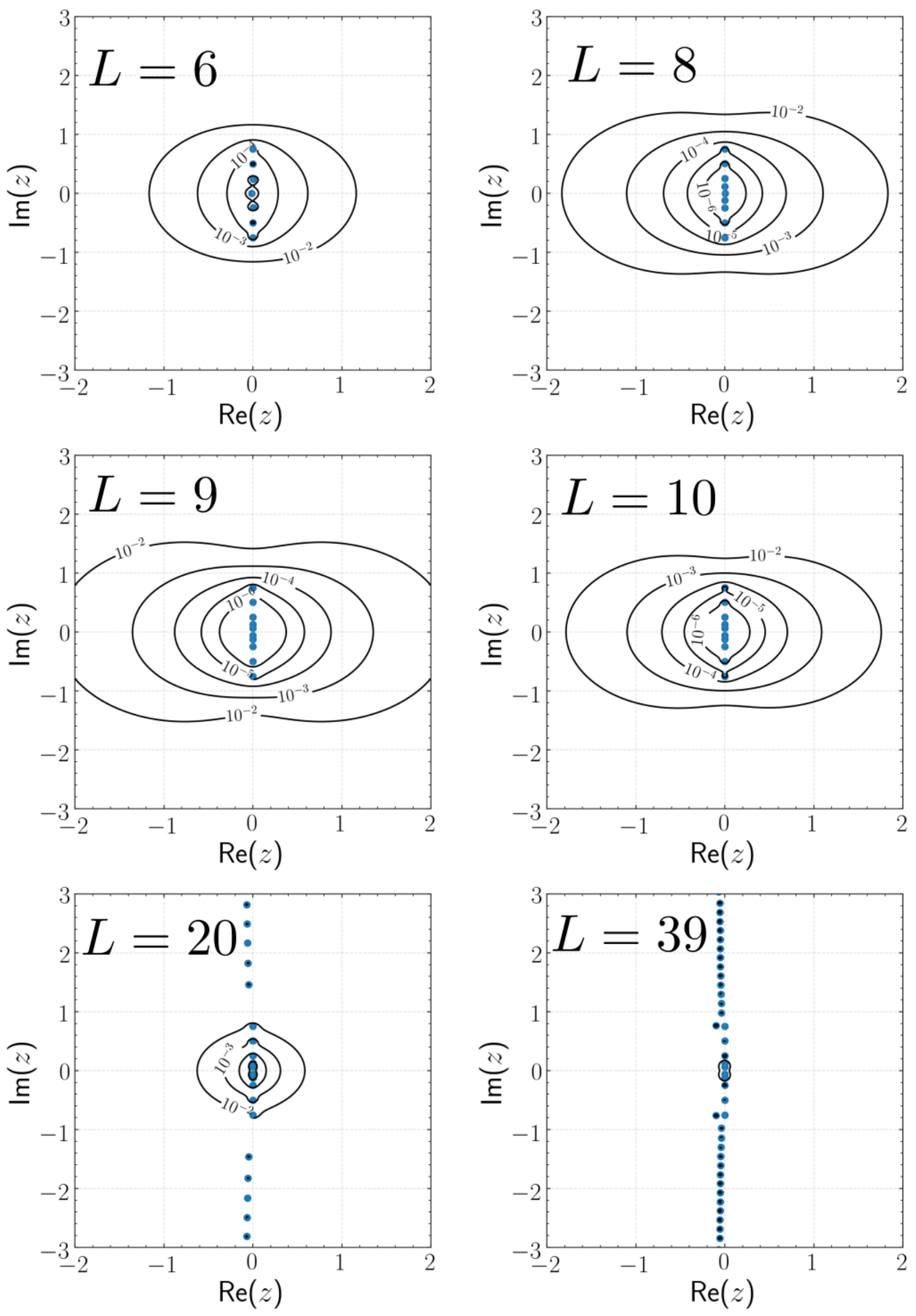}
 \caption{Isocontours of pseudospectra at $\epsilon=10^{-2}$, $10^{-3}$, $10^{-4}$, $10^{-5}$, $10^{-6}$ for different time delays $L$ for the toy 5 waves case.}
 \label{fig:noise_spectra}
\end{figure}

\subsection{Turbulent Rayleigh-B\'{e}nard convection}
\label{sec:rb}


As a final test case, we consider  Rayleigh-B\'{e}nard convection, which is a problem of great interest to the fluid dynamics community. As displayed in \Cref{fig:rb_svd}, the fluid is confined between two infinite horizontal planes with a hotter lower plane. The Rayleigh number, which represents the strength of buoyancy with respect to momentum and heat diffusion is defined as $Ra = {U_f^2 H^2}/{\nu \kappa} = {\alpha g \Delta T H^3}/{\nu \kappa}$ where $\alpha$ is the thermal expansion coefficient, $\kappa$ is the thermal diffusivity, $\Delta T$ is the temperature difference between hot and cold planes, and $U_f \triangleq \sqrt{\alpha g \Delta T H}$ is the so-called free-fall velocity of a fluid parcel. Additional parameters that govern the dynamics are aspect ratio $\Gamma \triangleq L/H$, the Prandtl number $Pr = \nu/ \kappa$. $L$ is the horizontal length scale of the domain. The computational domain is taken as a rectangular box with periodic side walls. We set $Ra = 10^7$ for fully turbulence; $H = \pi L_x = \pi L_y$ and $Pr = 1$. This domain is discretized uniformly in $x$ and $y$ direction with $128\times 128$ grid points and in $z$ direction with 128 grid points highly refined near the wall. The thickness of thermal boundary layer is sufficiently resolved~\cite{verzicco2003numerical} since  $\delta_{\theta}/H \sim {1}/{2Nu} \approx 10 \Delta z $, where $\Delta z$ is the grid size in $z$ direction closest to the wall.  

The simulation is performed by solving 3D incompressible Navier-Stokes equations with a Boussinesq approximation using  OpenFOAM~\cite{jasak2007openfoam}. Linear heat conduction, i.e., an unstable equilibrium state is set as initial condition. The simulation is performed over four thousand characteristic advection time units, approximately $ 1.264\tau_{\textrm{diff}}$, where $\tau_{\textrm{diff}}\triangleq {H^2}/{\nu}$, $\tau_{\textrm{adv}} \triangleq \sqrt{{H}/{\alpha g \Delta T}}$. The sampling interval is $\Delta t = 4\tau_{\textrm{adv}}$. Note that this  dynamical system contains approximately 2 million degrees of freedom. Here we perform dimension reduction on the sampled system state $u,v,w,T$ similar to~\cite{pan2020sparsity}. First, normalization for each component and mean subtraction is performed. Second, as shown in the bottom subfigure in the \Cref{fig:rb_svd}, more than 99\% of variance for the nonlinear system is retained in the first $r=800$ POD modes on the normalized data. After removing the effect of initial condition (the first 100 snapshots), we use  900 snapshots~\cite{2020_RB_data} for analysis.

\begin{figure}[htbp]
 \centering
    \includegraphics[width=\linewidth]{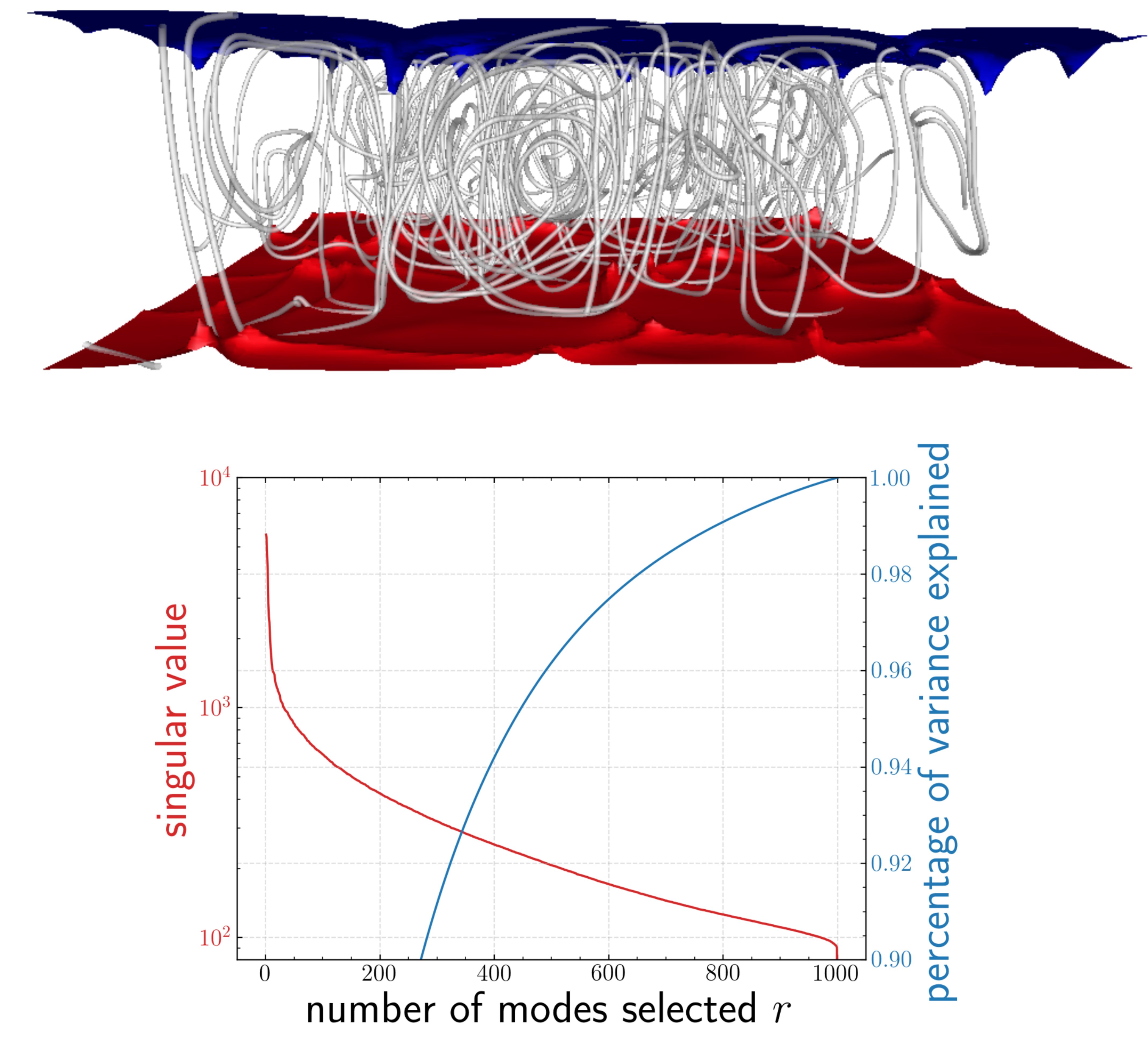}
 \caption{Top: Iso-surfaces of temperature at $T=295$ (red) and $T=285$ (blue) with streamlines of velocity field (grey) at $t=7.28$ for the  Rayleigh-B\'{e}nard turbulent convection at $Ra = 10^7$. Bottom: Singular value distribution and percentage of variance explained.}
 \label{fig:rb_svd}
\end{figure}

We consider the first 800 out of 900 snapshots as training data. Then we perform a posteriori evaluation for 900 steps to examine the reconstruction performance and predictions on future time steps.  
As shown in \Cref{fig:compare_with_svd_dmd_x1}, performing SVD-DMD ($L=0$) on this dataset with  $r=800$ results in a set of unstable eigenvalues,  leading to undesired blow up in a posteriori evaluation after $180\Delta t$. While the model with time delay $L=1$, overfits to the training data from $0$ to approximately $800\Delta t$, it yields  stable predictions. Note that in this case $L_{opt} = \lceil{\frac{M}{r+1}}\rceil = 1$. 

\begin{figure*}[htbp]
 \centering
    \includegraphics[width=0.7\linewidth]{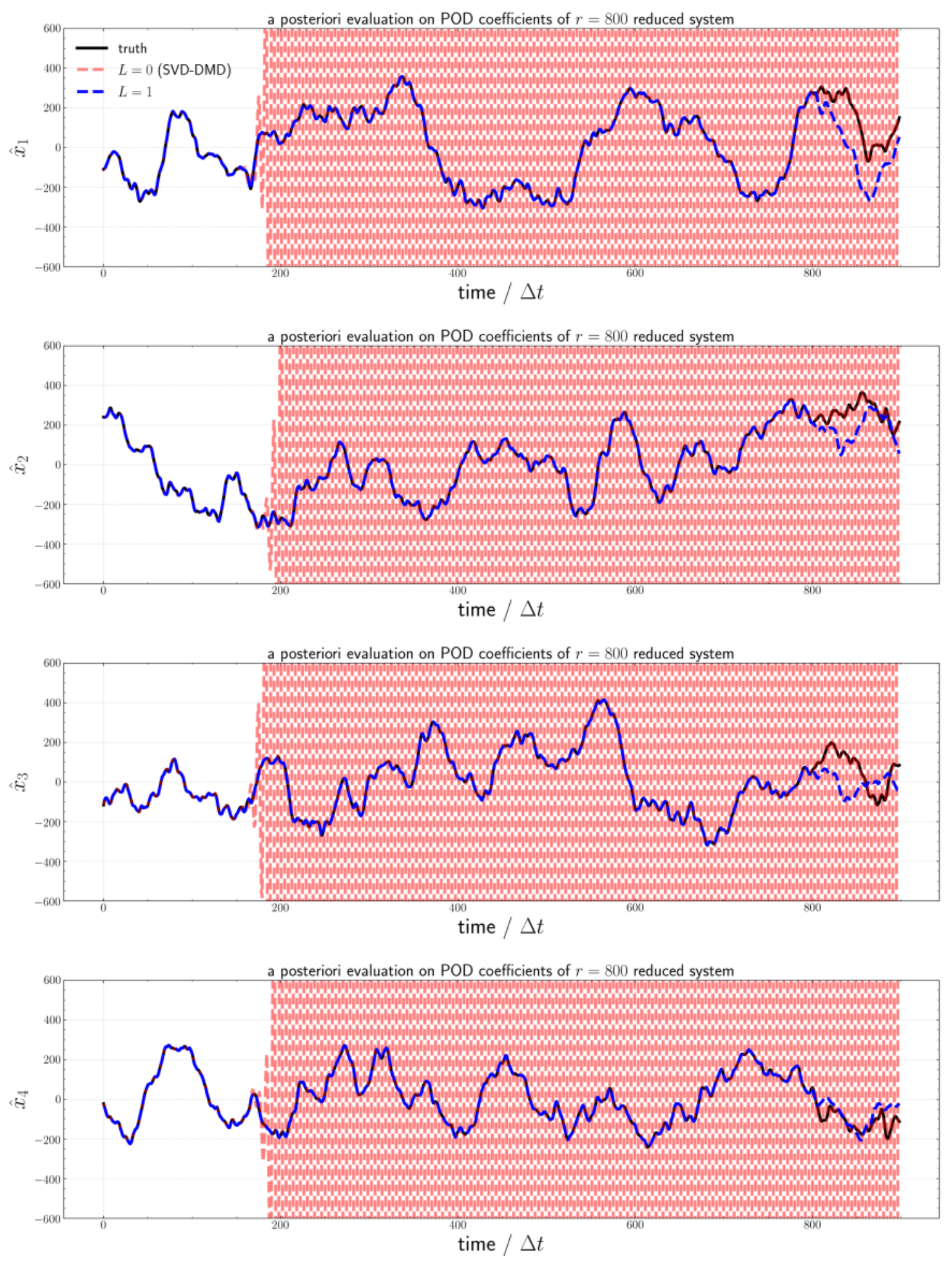}
 \caption{Comparison of a posteriori evaluation between linear model without/with time delay $L=1$ for the reduced system with $r=800$. Note that $0\le t \le 800$ is training horizon while $800 < t \le 900$ is testing horizon.}
 \label{fig:compare_with_svd_dmd_x1}
\end{figure*}

We then take the entire 900 snapshots trajectory as training data to investigate the impact of of time delays $L$ on stabilizing the reconstruction at various $r$. As shown in \Cref{fig:reconstruction_cond_mse_delay}, we first observe that as $r$ decreases, the numerical condition number increases simply as a consequence of retaining more small singular values. Secondly, we observe a general trend that, for each $r$, model performance worsens as $L$ increases from $0$ to $L_{opt}-1$, i.e., the transient point where linear systems approximately change from over-determined to under-determined. 
For the current data specifically, we observe that the system becomes stable as $L$ increases as the system becomes under-determined. Thirdly, we observe that the  condition number shares a similar pattern with the reconstruction performance for each $r$.

\begin{figure}[htbp]
 \centering
    \includegraphics[width=1\linewidth]{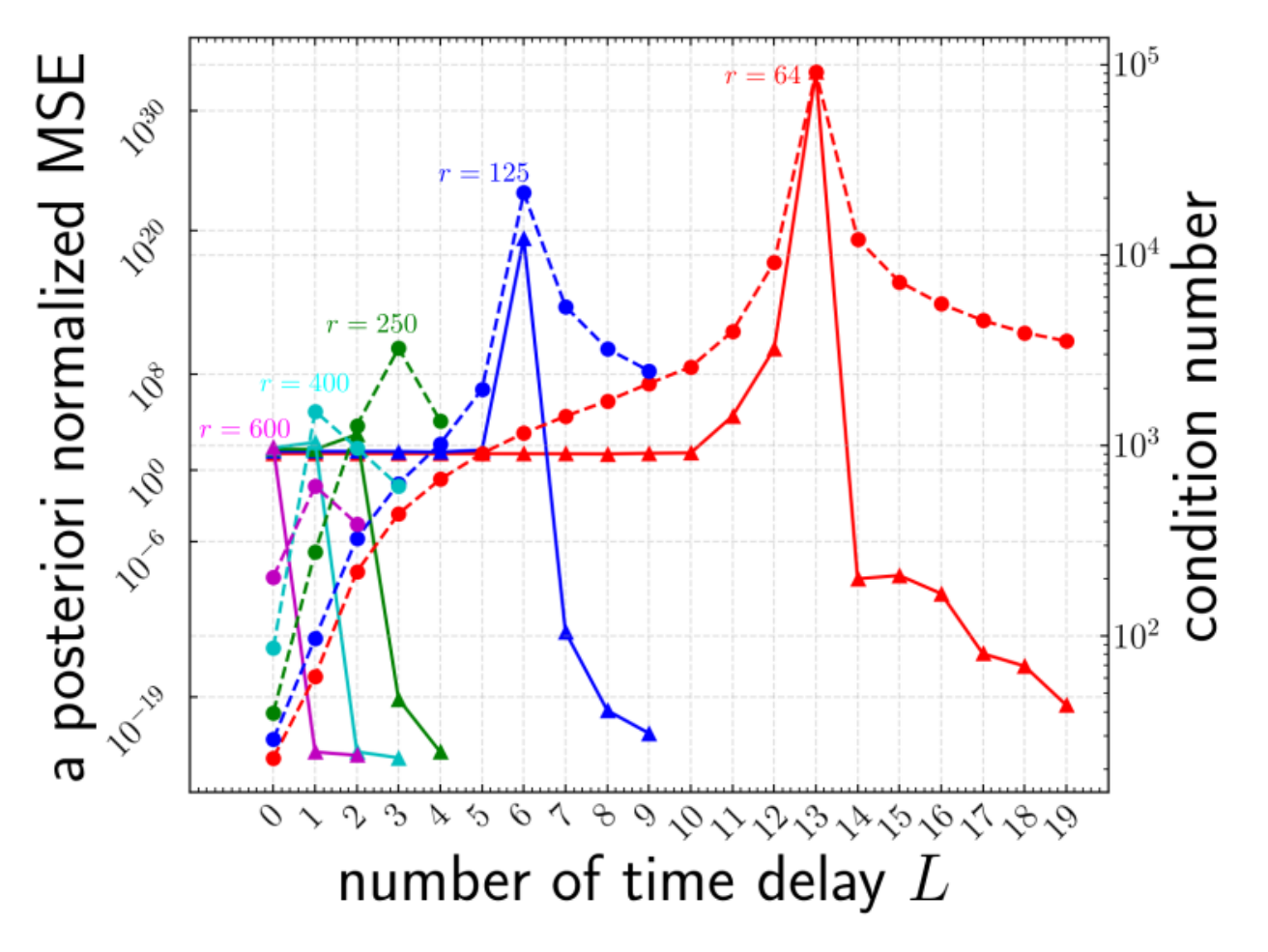}
 \caption{Dependency of model reconstruction performance and condition number on the number of time delays $L$ with varying reduced dimension $r$ for turbulent Rayleigh-B\'{e}nard convection. Solid line: normalized mean-squared-error. Dashed line: condition number.}
 \label{fig:reconstruction_cond_mse_delay}
\end{figure}

\section{Conclusions}
\label{sec:conclusions}

In summary, this work addressed fundamental questions regarding the structure and conditioning of linear time delay models of non-linear dynamics on an attractor. The following are the main contributions of this work:
\begin{enumerate}

    \item 
    We proved that for non-linear scalar dynamical systems, the number of time delays  required by linear models to perfectly recover limit cycles  is determined by the sparsity in the Fourier spectrum.
    
    \item In the vector case, we proved that the minimal number of time delays has a tight upper bound that is precisely the output controllability index of a related  linear system.  
    
    \item We developed an equivalent representation of the linear time delayed model in the spectral domain and provided the exact solution of the delay transition matrix $\mathbf K$ for the scalar case.
    
    \item We derived an upper bound on the 2-norm condition number as a function of the sampling rate and the number of time delays. Thus, ill-conditioning can be mitigated by increasing the number of time delays and/or subsampling the original signal.
    
    \item We explicitly showed that the dynamics over the full period can be perfectly recovered by training the linear time delayed model over just a partial period. 
    
    \item \textcolor{black}{Influences of the noises are evaluated with ensemble realizations. We further analyzed the stability of the model with the concept of pseudospectra. The results are consistent with our finding on the stabilizing role of the number of time delays.}
    
    \item Numerical experiments on simple problems were shown to confirm each of the above theoretical results. 
    
    \item The impact of time delays on linear modeling of large-scale chaotic systems  was investigated, and Hankel DMD was confirmed to produce stable and accurate results given enough time delays.
    
\end{enumerate}
    
\noindent A few observations are pertinent to  the above conclusions:
    \begin{itemize}
        \item Due to  accuracy considerations on the numerical integrator, the sampling rate in the raw data may be excessively high. We believe that instabilities in prediction arise from choices that lead to poor numerical conditioning. Thus, as an alternate to pursuing explicit stabilization techniques~\cite{le2017higher,champion2018discovery}, appropriate sub-sampling and time delays can  be employed. Indeed, when noise is present in the data, explicit stabilization, \textcolor{black}{Bayesian inference}, or denoising techniques~\cite{rudy2019deep} may be warranted.
        \item     
    The effectiveness of linear time delayed models of non-linear dynamics is that - by leveraging Fourier interpolation - an arbitrarily close trajectory from a high dimensional linear system can be derived. This also intuitively explains the ability of the model - when the signal has  a sparse spectrum - to perform ``true'' predictions  without training on a full period of data. 

    \end{itemize}

\begin{acknowledgments}
We would like to thank Mr. Nicholas Arnold-Medabalimi for visualizing and preparing the SVD of the Rayleigh-Bernard turbulence. This work was supported by DARPA under the grant titled {\em Physics Inspired Learning and Learning the Order and Structure Of Physics,} (Technical Monitor: Dr. Jim Gimlett), and US Air Force Office of Scientific Research through the
Center of Excellence Grant FA9550-17-1-0195 (Technical Monitors: Mitat Birkan \& Fariba Fahroo).
\end{acknowledgments}

\section*{Data availability}
The data that support the findings of this study are openly available in
\url{https://github.com/pswpswpsw/2020_Time_Delay_Paper_Rayleigh-Benard}

\appendix




\section{Proofs}

\subsection{Proof of \Cref{thm:sparse_time_delay}}
\label{apdx:sparse_time_delay_thm_proof}
\begin{proof}
Consider the discrete Fourier spectrum of $S_M(t)$ with $M$ uniform samples  per period. The perfect prediction using a time-delayed linear model requires the existence of a real $\mathbf{K}$ that satisfies \Cref{eq:X_K_Y_stacked}, which is equivalent to \Cref{eq:residual_R}. Therefore, \Cref{eq:X_K_Y_stacked} and \Cref{eq:residual_R} share the same solutions in $\mathbb{C}^{(L+1)\times 1}$. Since the Fourier spectrum contains only $P$ non-zero coefficients, \Cref{eq:residual_R} is equivalent to \Cref{eq:linear_system_sparse}.  The necessary and sufficient condition to have a solution (not necessarily real)  $\mathbf{K}$ for \Cref{eq:linear_system_sparse}  
 follows from the Rouch\'e-Capelli theorem~\cite{meyer2000matrix},
\begin{equation}
\label{eq:rank_equality}
\textrm{rank}\left(
\begin{bmatrix} \mathbf{A}_{\mathcal{I}^P_M,L}  & \mathbf{b}_{\mathcal{I}^P_M}  \end{bmatrix} \right) 
= 
\textrm{rank}\left(
\mathbf{A}_{\mathcal{I}^P_M,L} \right).
\end{equation}
Using the first property in \Cref{lem:vdm_rank}, rank$(\mathbf{A}_{\mathcal{I}_M^P,L}) = \min(P,L+1)$. While for the augmented matrix, 
\begin{align}
    & \textrm{rank}\left(
    \begin{bmatrix}
    \mathbf{A}_{\mathcal{I}^P_M,L} &\mathbf{b}_{\mathcal{I}^P_M}
    \end{bmatrix} 
    \right) 
    = 
    \textrm{rank}\left(
    \begin{bmatrix}
    \mathbf{b}_{\mathcal{I}^P_M} & \mathbf{A}_{\mathcal{I}^P_M,L}
    \end{bmatrix} 
    \right) \\
    \nonumber
    &= 
    \textrm{rank} \left(
    \begin{bmatrix}
    \omega^{i_0} & 1 & \omega^{-i_0} & \ldots  & \omega^{-Li_{0}} \\ 
    \omega^{i_1} & 1 & \omega^{-i_1} & \ldots & \omega^{-Li_1}\\ 
    \vdots &\vdots & \vdots & \ddots & \vdots \\ 
    \omega^{i_{P-1}} &1 &  \omega^{-i_{P-1}} & \ldots &  \omega^{-Li_{P-1}}
    \end{bmatrix} \right) \\
    &= \nonumber
    \textrm{rank} \left(
    \begin{bmatrix}
    \omega^{i_0} & & & & \\
    & \omega^{i_1} & & & \\
    & & & \ddots & \\
    & & & & \omega^{i_{P-1}}
    \end{bmatrix} 
    \begin{bmatrix}
    1 & \omega^{-i_0} & \ldots  & \omega^{-(L+1)i_0} \\ 
1 & \omega^{-i_1} & \ldots  & \omega^{-(L+1)i_1}\\ 
    \vdots & \vdots & \ddots & \vdots \\ 
    1 & \omega^{-i_{P-1}} & \ldots  & \omega^{-(L+1)i_{P-1}}
    \end{bmatrix} \right) \\
    \nonumber
    & \nonumber = \textrm{rank} \left(\diag( \omega^{i_0},\ldots,\omega^{i_{P-1}} ) \mathbf{V}_{L+2}( \omega^{-i_0},\ldots,\omega^{-i_{P-1}})\right)\\
    &= \nonumber \textrm{rank} \left( \mathbf{V}_{L+2}( \omega^{-i_0},\ldots,\omega^{-i_{P-1}})\right) \\
    &= \nonumber \min(P, L+2).
\end{align}
Therefore, if $L+2 \le P$, i.e., $L \le P - 2$, $\min(P,L+2) = L+2 \neq L+1 = \min(P,L+1)$. If $L+1 \ge P$, i.e., $L \ge P -1$, then $\min(P,L+2) = P = \min(P,L+1)$. So the minimal $L$ for \Cref{eq:rank_equality} to hold is $P-1$, which makes $\mathbf{A}_{\mathcal{I}_M^P, L}$ an invertible Vandermonde square matrix. Thus the solution is unique in $\mathbb{C}^{(L+1)\times 1}$. From \Cref{lem:complex_solution}, consider \Cref{eq:X_K_Y_stacked}, the solution is real.
\end{proof}

\subsection{Proof of \Cref{thm:control_vec_connection}}
\label{apdx:thm_control_vec_connection}
\begin{proof}

Consider
\begin{widetext}
\begin{align}
    & 
    \mathcal{OC}(\mathbf{A,B,C}; \mu) =
    \mathbf{C} \begin{bmatrix}
    \mathbf{B} & \mathbf{AB} & \ldots & \mathbf{A}^{\mu-1}\mathbf{B}
    \end{bmatrix} \\
    \nonumber &= \mathbf{C} 
    \begin{bmatrix}
    \mathbf{I} & \mathbf{A} & \ldots & \mathbf{A}^{\mu-1}
    \end{bmatrix}
    \begin{bmatrix}
    \mathbf{B} & & \\
    & \ddots & \\
    & & \mathbf{B}
    \end{bmatrix} \\
    &= 
    \nonumber
    \mathbf{EC}^\prime
    \begin{bmatrix}
    \mathbf{I} &  & &  & \mathbf{\Lambda}^{-(\mu-1)} & &\\
    & \ddots &  &  \ldots &  & \ddots & \\
    & &  \mathbf{I} & & &  & \mathbf{\Lambda}^{-(\mu-1)}
    \end{bmatrix}
    \begin{bmatrix}
    \mathbf{e} & & \\
    & \ddots & \\
    & & \mathbf{e}
    \end{bmatrix}\\
    \nonumber
    & = 
    \mathbf{E}
    \begin{bmatrix}
    \diag(\mathbf{{a}}^{(1)}) \mathbf{e} & 
    \ldots & 
    \diag(\mathbf{{a}}^{(J)}) \mathbf{e}
    & 
    \ldots 
    & 
    \diag(\mathbf{{a}}^{(1)}) \mathbf{\Lambda}^{-(\mu-1)
    }
    \mathbf{e} & 
    \ldots & 
    \diag(\mathbf{{a}}^{(J)}) \mathbf{\Lambda}^{-(\mu-1)
    }\mathbf{e}
    \end{bmatrix}.
\end{align}
\end{widetext}
Following \Cref{def:output_c}, for any integer $i \ge \mu$, $\mathcal{OC}(\mathbf{A,B,C}; i)$ is full rank. Thus, $\forall v \in \mathbb{C}^{P \times 1}$, $v$ lies in the column space of $\mathcal{OC}(\mathbf{A,B,C}; i)$. Therefore, $\mathbf{F}v$ should lie in the column space of $\mathbf{F} \mathcal{OC}(\mathbf{A,B,C}; i)$. Noticing \Cref{lem:EF_lemma} and \Cref{rem:geo}, we have 
\begin{equation}
    \mathbf{F}v \in \textrm{Col}(\mathbf{F} \mathcal{OC}(\mathbf{A,B,C}; i)) = \mathcal{W}_{i-1}.
\end{equation}
Now, consider $\forall j =1,\ldots,J$, $v^{(j)} = \mathbf{E} \diag(\mathbf{a}^{(j)}) \mathbf{b}_{\mathcal{I}_M^M} \in \mathcal{C}^{P \times 1}$, from the above, we have
\begin{equation}
    \mathbf{F}v^{(j)} = \mathbf{FE}  \diag(\mathbf{a}^{(j)}) \mathbf{b}_{\mathcal{I}_M^M} = \diag(\mathbf{a}^{(j)}) \mathbf{b}_{\mathcal{I}_M^M} = \mathbf{c}^{(j)} \in \mathcal{W}_{i-1}.
\end{equation} 
Since the minimal $i$ for $\mathcal{OC}(\mathbf{A,B,C}; i)$ to be full rank is $\mu$, the output observability index is $\mu$. Correspondingly, when the number of time delays $L = \mu - 1$, a solution exists for \Cref{eq:lem_vector_residual_R}, which makes $\mu-1$ an upper bound for the minimal time delay in \Cref{lem:vector_residual_R}. Finally, to show that the bounds are tight, consider that when $J=1$, \Cref{thm:control_vec_connection} reverts to  \Cref{thm:sparse_time_delay} where $\mu = P$, and thus $\mu-1=P-1$ is essentially the minimal number of time delays required.
\end{proof}

\subsection{Proof of \Cref{lem:vdm_rank}}
\label{apdx:vdm_rank_proof}
\begin{proof}
\begin{equation}
    \mathbf{A} = \mathbf{V}_N( \alpha_0, \alpha_1, \ldots, \alpha_{M-1}) = 
    \begin{bmatrix}
    1 & \alpha_0 & \ldots  & \alpha_0^{N-1}\\ 
    1 & \alpha_1 & \ldots & \alpha_1^{N-1}\\ 
    \vdots & \vdots & \ddots & \vdots \\ 
    1 &  \alpha_{M-1} & \ldots &  \alpha_{M-1}^{N-1}
    \end{bmatrix}
\end{equation}
If $M\ge N$, then
\begin{equation}
\mathbf{V}_N(\alpha_0, \alpha_1, \ldots, \alpha_{M-1}) = 
    \begin{bmatrix}
    \mathbf{V}_N(\alpha_0, \alpha_1, \ldots, \alpha_{N-1}) \\
    \mathbf{V}_N(\alpha_{N}, \ldots, \alpha_{M-1})
    \end{bmatrix}
\end{equation}
Since $\{\alpha_i\}_{i\in \mathcal{I}_M}$ are distinct, $\mathbf{V}_N(\alpha_0, \alpha_1, \ldots, \alpha_{N-1})$ is full rank with rank $N$. Since $M \ge N$, the row space of $\mathbf{V}_N(\alpha_0, \alpha_1, \ldots, \alpha_{M-1})$ and is fully spanned by the first $N$ rows, and is thus full rank. Likewise, if $M < N$, 
\begin{equation}
\mathbf{V}_N(\alpha_0, \alpha_1, \ldots, \alpha_{M-1}) = 
    \begin{bmatrix}
    \mathbf{V}_M(\alpha_0, \alpha_1, \ldots, \alpha_{M-1}) &
     * \quad
    \end{bmatrix}
\end{equation}
Similarly, the first $M$ columns are full rank  and $\mathbf{V}_N(\alpha_0, \alpha_1, \ldots, \alpha_{M-1})$ is also full rank. 
Thus in either case, $\mathbf{V}_N(\alpha_0, \alpha_1, \ldots, \alpha_{M-1})$ is full rank with rank as $\min(M,N)$. To show the the second property, one can simply replace $\{\alpha_i\}_{i\in \mathcal{I}_M}$ with $\{\alpha_i\}_{i\in \mathcal{J}}$ in the above arguments.
Since $|\mathcal{J}|=Q$, $
\rank\left(\mathbf{V}_N(\{ \alpha_i \}_{i \in \mathcal{J}})\right) 
= \min(Q,N)$.
\end{proof}

\subsection{Proof of \Cref{lem:complex_solution}}
\label{apdx:complex_solution_proof}
\begin{proof}
First, let's prove from left to right. If $\exists \mathbf{x} \in \mathbb{C}^{n\times 1}$, we have $\mathbf{Ax} = \mathbf{b}$. Note that $\widebar{\mathbf{Ax}} = \widebar{\mathbf{A}} \widebar{\mathbf{x}} = \mathbf{A}\widebar{\mathbf{x}} =  \widebar{\mathbf{b}} = \mathbf{b}$ then consider $\mathbf{x}^\prime = \frac{{\widebar{\mathbf{x}} + \mathbf{x}}}{2} \in \mathbb{R}^{n \times 1}$. $\mathbf{Ax^\prime} = (\mathbf{A}\mathbf{x} + \mathbf{A}\widebar{\mathbf{x}})/ 2 = (\mathbf{b} + \mathbf{b})/2 = \mathbf{b}$. Second, it is easy to show from right to left. Third, when uniqueness is added, note that $\mathbf{Ax=b} \iff \mathbf{A}\widebar{\mathbf{x}} = \mathbf{b}$, it is easy to show both directions since it is impossible to have complex solution being unique and not real. 
\end{proof}

\subsection{Proof of \Cref{lem:vector_residual_R}}
\label{apdx:vector_proof}
\begin{proof}
Given the definitions in \Cref{eq:Y_stacked_vector,eq:X_K_Y_vector,eq:X_stacked_vector}, note \Cref{eq:Y_k_stack}, we  have
\begin{equation}
    \label{eq:apdx_Yk}
    \mathbf{\tilde{Y}}_k = 
    \begin{bmatrix}
        \mathbf{\Omega}_{k,L} & & \\
        & \ddots & \\
        & & \mathbf{\Omega}_{k,L} 
    \end{bmatrix}
    \begin{bmatrix}
        \mathbf{a}^{(1)} \\
        \vdots \\
        \mathbf{a}^{(J)} 
    \end{bmatrix}. 
\end{equation}
Recall \Cref{eq:gamma_k}, note that 
\begin{equation}
    \label{eq:apdx_gamma_k}
    \mathbf{\Upsilon}_k = \mathbf{\Lambda}^k \mathbf{b}_{\mathcal{I}_M^M},
\end{equation}
where $\mathbf{\Lambda} \triangleq \begin{bmatrix}
1 & & & \\ 
& \omega & &  \\
& & \ddots &  \\
& & & \omega^{(M-1)}
\end{bmatrix}$.
    
Moreover, note that 
\begin{equation}
    \label{eq:apdx_omega_k}
    \mathbf{\Omega}^\top_{k,L} = \mathbf{\Lambda}^k \mathbf{A}_{\mathcal{I}^M_M,L}.
\end{equation}

We rewrite \Cref{eq:X_K_Y_vector} for a given $k$ using \Cref{eq:x_k_p_1} for the left hand side and \Cref{eq:apdx_Yk} for the right hand side in \Cref{eq:X_K_Y_vector}, 
\begin{equation}
    \begin{bmatrix}
        \mathbf{\Upsilon}_{k}^\top & & \\
        & \ddots & \\
        & & \mathbf{\Upsilon}_{k}^\top
    \end{bmatrix}    
    \begin{bmatrix}
        \mathbf{a}^{(1)} \\
        \vdots \\
        \mathbf{a}^{(J)} 
    \end{bmatrix}
    =
    \mathbf{\tilde{K}}^\top
    \begin{bmatrix}
        \mathbf{\Omega}_{k,L} & & \\
        & \ddots & \\
        & & \mathbf{\Omega}_{k,L} 
    \end{bmatrix}
    \begin{bmatrix}
        \mathbf{a}^{(1)} \\
        \vdots \\
        \mathbf{a}^{(J)} 
    \end{bmatrix}. 
\end{equation}

Using \Cref{eq:apdx_gamma_k,eq:apdx_omega_k} for the above, we  have
\begin{equation}
    \begin{bmatrix}
        \mathbf{a}^{(1)} \\
        \vdots \\
        \mathbf{a}^{(J)} 
    \end{bmatrix}^\top 
    \left(
        \begin{bmatrix}
        \mathbf{\Upsilon}_{k} & & \\
        & \ddots & \\
        & & \mathbf{\Upsilon}_{k} 
    \end{bmatrix} \right)
    - 
    \begin{bmatrix}
        \mathbf{\Omega}_{k,L}^\top & & \\
        & \ddots & \\
        & & \mathbf{\Omega}_{k,L}^\top 
    \end{bmatrix} \mathbf{\tilde{K}}
     = \mathbf{0},
\end{equation}
\begin{align}
\nonumber
    \begin{bmatrix}
        \mathbf{a}^{(1)} \\
        \vdots \\
        \mathbf{a}^{(J)} 
    \end{bmatrix}^\top
    \begin{bmatrix}
        \mathbf{\Lambda}^{k} & & \\
        & \ddots & \\
        & & \mathbf{\Lambda}^{k} 
    \end{bmatrix}
    & \Bigg(
        \begin{bmatrix}
        \mathbf{b}_{\mathcal{I}^M_M} & & \\
        & \ddots & \\
        & & \mathbf{b}_{\mathcal{I}^M_M} 
        \end{bmatrix}
        \\
        \label{eq:vector_proof_equation}
        & - 
        \begin{bmatrix}
        \mathbf{A}_{\mathcal{I}^M_M,L} & & \\
        & \ddots & \\
        & & \mathbf{A}_{\mathcal{I}^M_M,L}
    \end{bmatrix} \mathbf{\tilde{K}}
    \Bigg) = \mathbf{0}.
\end{align}

Considering $k = 0,1,\ldots, M-1$, we stack $\begin{bmatrix}
        \mathbf{a}^{(1)} \\
        \vdots \\
        \mathbf{a}^{(J)} 
    \end{bmatrix}^\top
    \begin{bmatrix}
        \mathbf{\Lambda}^{k} & & \\
        & \ddots & \\
        & & \mathbf{\Lambda}^{k} 
    \end{bmatrix}$
row by row as
\begin{align}
    & \nonumber
    \begin{bmatrix}
        a_0^{(1)} & \ldots & a_{M-1}^{(1)} & \ldots & a_0^{(J)} & \ldots & a_{M-1}^{(J)} \\
        a_0^{(1)} & \ldots & \omega^{M-1} a_{M-1}^{(1)} & \ldots & a_0^{(J)} & \ldots & \omega^{M-1} a_{M-1}^{(J)} \\
        \vdots & \ddots & \vdots & \ldots & \vdots & \ddots & \vdots \\
        a_0^{(1)} & \ldots & \omega^{(M-1)^2} a_{M-1}^{(1)} & \ldots & a_0^{(J)} & \ldots & \omega^{(M-1)^2} a_{M-1}^{(J)} 
    \end{bmatrix} \\
    & \nonumber = 
    \mathbf{V}_M(\{ \omega^{j} \}_{j=0}^{M-1})
    \begin{bmatrix}
        \mathbf{I} & \ldots & \mathbf{I} 
    \end{bmatrix}
    \diag(\{\mathbf{a}^{(l)}\}_{l=1}^{J}) \\
    &  = \mathbf{V}_M(\{ \omega^{j} \}_{j=0}^{M-1})
    \begin{bmatrix}
    \diag(\mathbf{a}^{(1)})  & 
    \ldots & 
    \diag(\mathbf{a}^{(J)})
    \end{bmatrix}.
\end{align}

Then plug the above equality into \Cref{eq:vector_proof_equation}, and notice the non-singularity of $\mathbf{V}_M(\{ \omega^{j} \}_{j=0}^{M-1})$, for $k = 0,1,\ldots,M-1$, \Cref{eq:vector_proof_equation} can be rewritten as 
\begin{align}
    \nonumber
    & \begin{bmatrix}
    \diag(\mathbf{a}^{(1)})  & 
    \ldots & 
    \diag(\mathbf{a}^{(J)})
    \end{bmatrix}
    \Bigg(
        \begin{bmatrix}
        \mathbf{b}_{\mathcal{I}^M_M} & & \\
        & \ddots & \\
        & & \mathbf{b}_{\mathcal{I}^M_M} 
        \end{bmatrix}
        \\
        \label{eq:apdx_vector_final}
        & - 
        \begin{bmatrix}
        \mathbf{A}_{\mathcal{I}^M_M,L} & & \\
        & \ddots & \\
        & & \mathbf{A}_{\mathcal{I}^M_M,L}
    \end{bmatrix} \mathbf{\tilde{K}}
    \Bigg) = \mathbf{0}.
\end{align}

From the Rouch\'e-Capelli theorem~\cite{meyer2000matrix}, the necessary and sufficient condition for the existence of   a complex solution to \Cref{eq:apdx_vector_final} is,
\begin{align}
     \nonumber
    \rank \Big(
    &
    \begin{bmatrix}
    \diag(\mathbf{a}^{(1)})\mathbf{A}_{\mathcal{I}^M_M,L}   & 
    \ldots & 
    \diag(\mathbf{a}^{(J)})\mathbf{A}_{\mathcal{I}^M_M,L} 
    \end{bmatrix}
    \Bigg)\\
     = 
     \label{eq:apdx_rouche}
    \rank \Big(
    \Big[
    & \diag(\mathbf{a}^{(1)})\mathbf{A}_{\mathcal{I}^M_M,L}   
    \ldots 
    \diag(\mathbf{a}^{(J)})\mathbf{A}_{\mathcal{I}^M_M,L} \\
    & \diag(\mathbf{a}^{(1)})\mathbf{b}_{\mathcal{I}^M_M}   
    \ldots 
    \diag(\mathbf{a}^{(J)})\mathbf{b}_{\mathcal{I}^M_M}
    \Big]
    \Big).
\end{align}
Note that since the above procedures are can be retained  in \Cref{eq:X_K_Y_vector}, \Cref{eq:X_K_Y_vector} and \Cref{eq:apdx_vector_final} share the same solution in $\mathbb{C}^{J(L+1) \times J}$. From \Cref{lem:complex_solution}, \Cref{eq:apdx_rouche} is also the necessary and sufficient condition for \Cref{eq:X_K_Y_vector} to have a real solution.
\end{proof}

\subsection{Proof of \Cref{lem:EF_lemma}}
\label{apdx:EF_proof}
\begin{proof}
For $n,J \in \mathbb{N}$, consider $J$ diagonal matrices in $\mathbf{A}$, for $j = 1,\ldots,J$, with the $j$-th diagonal matrices being $\diag(\mathbf{a}^{(j)}) \in \mathbb{C}^{n \times n}$. $\mathbf{a}^{(j)} = \begin{bmatrix}
\mathbf{a}^{(j)}_1 & \mathbf{a}^{(j)}_2 & \ldots \mathbf{a}^{(j)}_n
\end{bmatrix}^\top$. Thus 
\[
\mathbf{A} = 
\begin{bmatrix}
\diag(\mathbf{a}^{(1)}) & \diag(\mathbf{a}^{(2)}) & \ldots & \diag(\mathbf{a}^{(J)})
\end{bmatrix}
\in \mathbb{C}^{n\times nJ}.\] 

We define the following row index set that describes the row that is not a zero row vector in $\mathbf{A}$. 
\begin{equation}
    \Gamma = \{ l | l \in \{ 1,\ldots, n\}, \exists j \in \{1,\ldots, J\}, \mathbf{a}^{(j)}_l \neq 0 \},
\end{equation}
where we further order the index in $\Gamma$ as
\[1 \le \gamma_1 < \gamma_2 < \ldots < \gamma_{P} \le n,\]
where $P = |\Gamma|$. Now we construct the row elimination matrix $\mathbf{E} \in \mathbb{C}^{P \times n}$ from $\Gamma$ with
\begin{equation}
      i \in \{1,\ldots,P\}, j \in \{1,\ldots,n \}, \mathbf{E}_{ij} = \delta_{\gamma_i,j}.
\end{equation}
For $\mathbf{EA}$, since $\mathbf{E}$ only removes the zero row vector, the rank of the matrix $\mathbf{EA}$ is the same as $\mathbf{A}$. To show $\mathbf{EA}$ is full rank, simply consider the following  procedure: 

From  the definition of $\Gamma$, on each row with row index $i = 1,\ldots,P$, there are non-zero entries. Start by choosing an entry, denoted as $\mathbf{a}_{\gamma_i}^{j_i}$ that is non-zero (while the choice of $j_i$ is not unique). Then, one can simply perform column operations that switch the column with index ${j_i}$ corresponding to the non-zero entry of $i$-th row, with the current $i$-th column. These operations can be iteratively performed, after which the following matrix is obtained:
\begin{equation}
    \mathbf{EA}\mathbf{R} = \begin{bmatrix}
    \mathbf{a}_{\gamma_1}^{j_1} & & & & * & \\
    & \mathbf{a}_{\gamma_2}^{j_2} & & & * & \\
    & & \ddots & & * & \\
    & & & \mathbf{a}_{\gamma_P}^{j_P} &  * &
    \end{bmatrix},
\end{equation}
where $\forall i=1,\ldots,P, \mathbf{a}_{\gamma_i}^{j_i} \neq 0$ and $\mathbf{R}$ is the elementary column operation matrix. Thus  $\mathbf{EAR}$ is full rank, and $\mathbf{EA}$ is full rank.

Define $\mathbf{F} = \mathbf{E}^\top$, i.e., $\mathbf{F}_{jk} = \delta_{\gamma_k, j}$. Thus 
\begin{align}
\nonumber
    & i,j \in \{1,\ldots,n\}, \mathbf{G}_{ij} \triangleq \mathbf{F}_{ik} \mathbf{E}_{kj} = \delta_{\gamma_k, i} \delta_{\gamma_k, j} \\
    & = \sum_{k=1}^{P} \delta_{\gamma_k, i} \delta_{\gamma_k, j} = \begin{cases}
    1, \textrm{ $i=j \in \Gamma$, }\\
    0, \textrm{ otherwise. }
    \end{cases}
\end{align}
Therefore,  $\mathbf{G}$ is simply a diagonal matrix that keeps the row with index in $\Gamma$ unchanged, but makes the row zero when the  index is not in $\Gamma$. However, the row index that is not in $\Gamma$  corresponds to a zero row vector, and thus $\mathbf{GA}=\mathbf{A}$, i.e., $\mathbf{E}^\top \mathbf{EA}=\mathbf{A}.$
\end{proof}

\subsection{Proof of \Cref{lem:upper_bound}}
\label{apdx:proof_upper_bound}
\begin{proof}
For $q \in \mathbb{N}$, denote $L_q = qM + P - 1$. Note that in \Cref{eq:linear_system_sparse}, when $L=P-1$, the minimal 2-norm solution $\mathbf{\hat{K}}_{P-1}$ is also unique. Specifically we denote $\mathbf{\hat{K}}_{P-1} = \begin{bmatrix}
\hat{K}_0 & \ldots & \hat{K}_{P-1}
\end{bmatrix}$. Note that, for any $L \ge P -1$, we can find $q =  \left \lfloor{\frac{L - P + 1}{M}}\right \rfloor$, such that $L \in \mathcal{T}_q \triangleq [L_q, L_{q+1})$. From the definition of the minimal 2-norm solution, we have $\lVert \mathbf{\hat{K}}_L \rVert_2 \le \lVert \mathbf{\hat{K}}_{L_q} \rVert_2$.  

Consider $\mathbf{A}_{\mathcal{I}^P_M,L_q}$ and notice that for $q = 0$, i.e., $ L_0= P-1 \le L < L_1 = M + P-1$, so $\lVert \mathbf{\hat{K}}_L \rVert_2 \le \lVert \mathbf{\hat{K}}_{L_0} \rVert_2 = \lVert \mathbf{\hat{K}}_{P-1} \rVert_2$; for $q \ge 1$, for any $1 \le j \le P$, the $j$-th column of $\mathbf{A}_{\mathcal{I}^P_M,L_q}$ is duplicated with the $(j + kM)$-th column, $k = 1,\ldots,q$. For $q \ge 1$,  $\mathbf{A}_{\mathcal{I}^P_M,L_q}$ in  \Cref{eq:linear_system_sparse}, consider the following easily validated special class of real solutions, 
\begin{equation}
    \mathbf{K} = 
    \begin{bmatrix}
    K_0 \\ \vdots \\ K_{P-1} \\ 0 \\ \vdots  \\ 0 \\ K_{M} \\ \vdots \\ K_{L_1} \\ 0 \\ \vdots \\ 0 \\ \vdots \\ K_{qM} \\ \vdots \\ K_{L_q}
    \end{bmatrix}^\top
    \in \mathbb{R}^{1\times (L_{q+1})},
\end{equation}
with the constraint that for any $1 \le j \le P$, $\sum_{l=0}^{q} K_{j-1+lM} = {\hat{K}}_{j-1}$. To find the minimal 2-norm solution, note that we have
\begin{align}
    \min \lVert \mathbf{K} \rVert_2^2 = \sum_{j=1}^{P} \min \sum_{l=0}^{q} K^2_{j-1 + lM}.
\end{align}
From Jensen's inequality, $\forall j = 1,\ldots,P$,
\begin{align}
    \frac{\sum_{l=0}^{q} K^2_{j-1 + lM}}{q+1} &\ge \left( \frac{\sum_{l=0}^{q} K_{j-1+lM}}{q+1} \right)^2, \\
    \sum_{l=0}^{q} K^2_{j-1 + lM} &\ge \frac{\hat{K}_{j-1}^2}{q+1},
\end{align}
where the equality holds when $K_{j-1+lM} = {\hat{K}_{j-1}}/{(q+1)}$ for $l = 0,\ldots,q$. Thus $\min \lVert \mathbf{K} \rVert_2^2 = \sum_{j=1}^{P} \hat{K}_{j-1}^2/(q+1) = \lVert \mathbf{\hat{K}}_{P-1} \rVert_2^2/(q+1)$. Since the above minimal norm is found within a special class of solutions in \Cref{eq:linear_system_sparse},  the general minimal 2-norm is $$ \lVert \mathbf{\hat{K}}_{L} \rVert_2^2 \le \lVert \mathbf{\hat{K}}_{L_q} \rVert_2^2 \le  \lVert \mathbf{\hat{K}}_{P-1} \rVert_2^2/(q+1).$$ Combining both cases for $q = 0$ and $q \ge 1$, we have the desired result.
\end{proof}

\subsection{Proof of \Cref{prop:norm_fn}}
\label{apdx:prop_norm_fn}
\begin{proof}
To begin with, consider the following under-determined linear system for $f \in \mathbb{R}^N$, given $N \ge n$
\begin{equation}
    \mathbf{V}_N(z_1,\ldots,z_n) f = \diag(z_1,\ldots,z_n) \mathbf{e},
\end{equation}
 where $\mathbf{e} = \begin{bmatrix}1 & 1 & \ldots & 1 \end{bmatrix}^\top$.
Denote ${f}_N$ to be the minimum 2-norm solution. Suppose for all nodes, $i=1,\ldots,n$, $|z_i| \le 1$. Baz\'{a}n~\cite{bazan2000conditioning} showed that \begin{equation}
\label{eq:bazan_minimal_solution}
    \displaystyle \lim_{N \rightarrow +\infty} \lVert  f_N \rVert_2 = 0.
\end{equation}
Consider multiplying \Cref{eq:linear_system_sparse} on both sides from the left with $\diag(\omega^{Li_0},\ldots,\omega^{Li_{P-1}})$. Notice that the diagonal matrix is non-singular for any $L \in \mathbb{N}$, and the inverse of permutation matrix is its transpose. Then we have
\begin{align}
    &\begin{bmatrix}
        \omega^{Li_0} & \omega^{(L-1)i_0} & \ldots & 1 \\
        \vdots & \vdots & \vdots & \vdots \\
        \omega^{Li_{P-1}} & \omega^{(L-1)i_{P-1}} & \ldots & 1 \\
    \end{bmatrix} \mathbf{K} = 
    \begin{bmatrix}
        \omega^{(L+1)i_0}\\
        \vdots \\
        \omega^{(L+1)i_{P-1}}
    \end{bmatrix},\\
    &\begin{bmatrix}
        1 & \omega^{i_0} & \ldots & \omega^{Li_{0}} \\
        \vdots & \vdots & \vdots & \vdots \\
        1 & \omega^{i_{P-1}} & \ldots & \omega^{Li_{P-1}},\\
    \end{bmatrix} \mathbf{P}^\top \mathbf{K}
    = 
    \begin{bmatrix}
    \omega^{i_0} & & \\
    & \ddots & \\
    & &  \omega^{i_{P-1}}
    \end{bmatrix}^{L+1}
    \mathbf{e}, \\
    & \label{eq:prop_solution}
    \mathbf{V}_{L+1}(\omega^{i_0},\ldots,\omega^{i_{P-1}}) f = (\diag(\omega^{i_0},\ldots,\omega^{i_{P-1}}))^{L+1} \mathbf{e},
\end{align}
where $f \triangleq \mathbf{P}^\top \mathbf{K}$, $\mathbf{P} \in \mathbb{R}^{(L+1)\times(L+1)}$ is the column permutation matrix that reverses the column order in $\mathbf{A}_{\mathcal{I}^P_M,L}$. Note that a solution exists when $L+1 = P$ and it is not unique when $L+1 > P$. Denote $f_{L}$ as the corresponding minimal 2-norm solution of \Cref{eq:prop_solution}. From \Cref{eq:bazan_minimal_solution}, consider \Cref{eq:prop_solution} and take $L \rightarrow +\infty$, $\lVert f_L \rVert_2 \rightarrow 0$. The row permutation matrix does not change the 2-norm of a vector, and hence there is a one-to-one correspondence between the solution in \Cref{eq:prop_solution} and \Cref{eq:linear_system_sparse}, such that the corresponding minimal 2-norm solution for \Cref{eq:linear_system_sparse} is $\mathbf{\hat{K}}_L \triangleq \mathbf{P} f_L$ thus  $\lVert  \mathbf{\hat{K}}_L \rVert_2 \rightarrow 0$.
\end{proof}

\subsection{Proof of \Cref{prop:k2_cond_convergence}}
\label{apdx:k2_cond_convergence}

\begin{proof}
Consider the fact that the Vandermonde matrix $\mathbf{V}_N(z_1,\ldots,z_n)$ with  $n$ distinct nodes $\{z_i\}_{i=1}^{n}$, $z_i \in \mathbb{C}$ of order $N$, $N \ge n$, i.e., $\mathbf{V}_N$ is full rank. The Frobenius-norm condition number is defined as $\kappa_F(\mathbf{V}_N) \triangleq \lVert \mathbf{V}_N \rVert_F \lVert \mathbf{V}_N^{\dagger} \rVert_F$, where $\dagger$ represents Moore-Penrose pseudoinverse. Baz\'{a}n~\cite{bazan2000conditioning} showed that if $\forall i=1,\ldots,n$, with distinct $|z_i| \le 1$, $N \ge n$, then 
\begin{widetext}
\begin{equation}
    \label{eq:kf_vn}
    \kappa_F(\mathbf{V}_N) \le n\left[ 1 + \frac{(n-1) + \lVert f_N \rVert_2^2 + \prod_{i=1}^{n} |z_i|^2 - \sum_{i=1}^{n} |z_i|^2 }{(n-1) \delta^2} \right]^{\frac{n-1}{2}} \phi_{N}(\alpha, \beta),
\end{equation}
\end{widetext}
where $\displaystyle \delta \triangleq \min_{1 \le i < j \le n} |z_i - z_j|$, $\phi_N(\alpha,\beta) \triangleq \sqrt{\frac{1+\alpha^2 + \ldots + \alpha^{2(N-1)}}{1+\beta^2 + \ldots + \beta^{2(N-1)}}}$, $\displaystyle \alpha \triangleq \max_{1\le j \le n} |z_j|$, $\displaystyle \beta \triangleq \min_{1\le j \le n} |z_j|$.

The key to understand the behavior of the upper bound of $\kappa_2(\mathbf{V}_N)$, is to estimate the convergence rate of $\lVert f_N \rVert_2$ which is considered difficult for a general distribution of  nodes~\cite{bazan2000conditioning}. For the particular case of \Cref{eq:linear_system_sparse}, we can show a tight upper bound in \Cref{lem:upper_bound}. Thus,   $\forall 1 \le i \le n, |z_i| = 1$, \Cref{eq:kf_vn} becomes, 
\begin{align}
    \label{eq:kf_upperbound}
    \kappa_F(\mathbf{V}_N) &\le n \left( 1 + \frac{\lVert f_N \rVert_2^2}{(n-1) \delta^2} \right)^{\frac{n-1}{2}}.
\end{align}
Now we note a general inequality between the condition number in the 2-norm and in the Frobenius norm~\cite{bazan2000conditioning} by considering, 
\begin{align}
    &  n -2 <  n - 2 + \kappa_2(\mathbf{V}_N) + \kappa_2^{-1}(\mathbf{V}_N) \le \kappa_F(\mathbf{V}_N),
    \\
    \label{eq:inequality_bazan_condition}
    & \kappa_2(\mathbf{V}_N) \le \frac{1}{2} \left[ \kappa_F(\mathbf{V}_N) - n + 2 + \sqrt{(\kappa_F(\mathbf{V}_N) - n +2)^2 -4}  \right].
\end{align}
The right hand side in  \Cref{eq:inequality_bazan_condition} is monotonically increasing with respect to $\kappa_F(\mathbf{V}_N)$. Therefore using the upper bound from \Cref{eq:kf_upperbound} in \Cref{eq:inequality_bazan_condition}, and  some algebra we have the following upper bound, $\forall N > n$, 
\begin{equation}
    \kappa_2(\mathbf{V}_N) \le 1 + \frac{d}{2} \left[ 1 + \sqrt{1 + \frac{4}{d}} \right],
\end{equation}
where
\begin{equation}
    d \triangleq n \left[\left( 1 + \frac{\lVert f_N \rVert_2^2}{(n-1) \delta^2} \right)^{\frac{n-1}{2}} - 1\right].
\end{equation}
Finally, note that $d$  monotonically increases  with  $\lVert f_N \rVert_2$, and thus with $n=P$, $N = L+1$, $z_l = \omega^{-i_l}$, $l = 0,\ldots,P-1$ and \Cref{lem:upper_bound},  the desired upper bound is achieved. As $L \rightarrow \infty$, $\mathbf{\hat{K}}_{L} \rightarrow 0$ and $d\rightarrow 0$, and thus it is trivial to show that $\kappa_2(\mathbf{A}_{\mathcal{I}_M^P,L}) \rightarrow 1$.
\end{proof}

\nocite{*}
\bibliography{aipsamp}

\end{document}